\let\phi=\varphi
\let\e=\varepsilon
\let\d=\delta
\let\subset=\subseteq
\newcommand{\N}{\mathbb N}
\newcommand{\Z}{\mathbb Z}
\newcommand{\R}{\mathbb R}
\newcommand{\T}{\mathfrak{T}}
\newcommand{\LF}{\mathfrak{L}}
\newcommand{\id}{\text{id}}
\newtheorem{satz}{Satz}[section]
\newtheorem{theorem}[satz]{Theorem}
\newtheorem{definition}[satz]{Definition}
\newtheorem{lemma}[satz]{Lemma}
\newtheorem{cor}[satz]{Corollary}
\newtheorem{fact}[satz]{Fact}
\newtheorem{prop}[satz]{Proposition}
\newtheorem{remark}[satz]{Remark}
\newtheorem*{lem}{Lemma}
\newtheorem*{prop1}{Proposition 3.7}
\newtheorem*{theo1}{Theorem 4.3}
\DeclareMathOperator*{\supp}{supp}
\DeclareMathOperator*{\conv}{conv}
\DeclareMathOperator*{\diam}{diam}
\DeclareMathOperator*{\dist}{dist}
\DeclareMathOperator*{\pos}{pos}
\DeclareMathOperator*{\fil}{fill}
\DeclareMathOperator*{\err}{err}
\DeclareMathOperator*{\inj}{inj}
\DeclareMathOperator*{\Time}{Time}
\DeclareMathOperator*{\Light}{Light}
\DeclareMathOperator*{\std}{std(g_R)}
\DeclareMathOperator*{\relint}{relint}
\begin{document}

\title{Aubry-Mather Theory and Lipschitz Continuity of the Time Separation}

\author[]{Stefan Suhr}
\address{Fachbereich Mathematik, Universit\"at Hamburg}
\email{stefan.suhr@math.uni-hamburg.de}

\date{\today}

\begin{abstract}
We consider Aubry-Mather theory for a subclass of class A spacetimes, i.e. compact vicious spacetimes with globally hyperbolic Abelian cover. 
In this subclass, called class A$_1$, we obtain improved results on timelike maximizers and Lipschitz continuity of the time separation of the Abelian cover 
on the i.g. optimal subsets.
\end{abstract}
\maketitle

\section{Introduction}\label{c4}

The existence problem of timelike geodesic lines and rays and with it the problem of the existence of timelike limit curves is persistent in Lorentzian geometry. 
For example the Lorentzian splitting theorem (\cite{es}) assumes the existence of a timelike geodesic line. A certain quality of co-rays of this line, namely the 
distance of the tangents to the light cones, is essential to establishing the Lipschitz continuity of the associated Busemann function, which in turn is an important
step in the proof of the splitting theorem. Conditions for the existence of such a line, or less restrictively a ray, are barely known in general situations, i.e. without 
any curvature or completeness assumptions.

The underlying geometric problem of the qualitative behavior of maximal geo\-desics can be studied in globally hyperbolic spacetimes which appear as Abelian 
covers of compact vicious spacetimes, using Aubry-Mather theory. Developing Aubry-Mather theory for class A spacetimes, i.e. compact vicious spacetimes with 
globally hyperbolic Abelian covering, \cite{suh110} establishes the existence of at least one timelike maximizer, i.e. a timelike pregeodesic which lifts to an 
arclength--maximizing one in the Abelian covering, in any class A spacetime. The proof further showed that the tangent curve of this maximizer is uniformly 
bounded away from the light cones, i.e. the tangents of an affine parameterization are contained in a compact subset of the tangent bundle. At some points though 
the properties of class A spacetimes were not sufficient to produce the results that one expects for timelike maximizers, i.e. all timelike maximizers in a reasonable 
subset of all maximizers yield flowlines of the geodesic flow contained in a compact subset. If this is not true, one would expect a minimum of $\dim H_1(M,\R)$-
many ``uniformly timelike'' maximizers. Examples of class A spacetimes suggest that this expectation is true in a large sub-class of class A spacetimes. 
These examples include the Lorentzian Hedlund example, the case of $2$-dimensional class A spacetimes and the conformally flat Lorentzian tori (\cite{suh110}). 

Among common properties that all these examples share, is that they give rise to what we will call a uniform family. A uniform family is a continuous family of 
timelike loops such that the base point evaluation map is a proper surjective submersion. It is immediate that the existence of a uniform family implies viciousness 
(proposition \ref{P5-}). Motivated by this fact we call a spacetime uniformly vicious if it admits a uniform family. 

It is the main idea in the present improvement of Aubry-Mather theory to strengthen the viciousness property of class A spacetimes to uniform viciousness. 
This restriction ensures that the following problem does not appear. By elementary reasons it is clear that the stable time separation $\mathfrak{l}$ is positive on 
the interior $\T^\circ$ of the stable time cone $\T$ (see appendix \ref{A1} for the definitions). In contrast no argument is known showing that the support of a 
maximal invariant measure $\mu$ with rotation class $\rho(\mu)\in \T^\circ$ should by confined to the timelike future pointing vectors for general class A 
spacetimes. 

To capture the problem more precisely note that the obstacle to proving the existence of $\dim H_1(M,\R)$-many geometrically distinct {\it timelike} ma\-ximizers is 
the existence of a timelike maximizer $\gamma$ and two limit measures $\mu_0$, $\mu_1$ of $\gamma$ with $\LF(\mu_0)=0$, i.e. $\rho(\mu_0)\in\partial\T$, and 
$\rho(\mu_1)\in \T^\circ$, i.e. $\LF(\mu_1)>0$. For uniformly vicious class A spacetimes we will exclude the existence of such maximizers and thus obtain the 
existence of $\dim H_1(M,\R)$-many distinct timelike maximizers. 

It was mentioned at the beginning that control over the tangents of co-rays to a timelike ray yields the  Lipschitz continuity of the associated Busemann function. We 
employ the acquired control over the tangent vectors of timelike maximizers and the idea underlying the proof of the Lipschitz continuity of the Busemann functions 
in \cite{gaho} to prove the Lipschitz continuity of the time separation of the  Abelian covers of class A$_1$ spacetimes on the i.g. optimal sets. These sets are 
$\T_\e\setminus B_K(0)$, where $\T_\e:=\{h\in\T|\; \dist(h,\partial \T)\ge \e \|h\|\}$ and $B_K(0)$ is the ball of some radius $K>0$ around $0\in H_1(M,\R)$. The 
optimality of $\T_\e$ is immediate from Minkowski space. The necessity to remove $B_K(0)$ from $\T_\e$ follows from the Lorentzian Hedlund examples in 
\cite{suh110}.

To the knowledge of 
the author, so far no result is known about the global Lipschitz continuity of the time separation of a globally hyperbolic spacetime in this generality.

The article is organized as follows. In section \ref{c41} we will define and discuss uniform viciousness for general spacetimes. We give several examples of 
uniformly vicious spacetimes and vicious spacetimes that aren't uniformly vicious. The section is concluded with a smoothing result for uniform families 
(proposition \ref{P5a}). 

Section \ref{S5.2} then discusses the Aubry-Mather theory for class A$_1$ spacetimes. The main technical step in this section is proposition \ref{P6}, while the 
main result proposition \ref{P7a} establishes the existence of at least $\dim H_1(M,\R)$-many geometrically distinct timelike maximizers. 
\begin{prop1}
Let $(M,g)$ be of class A$_1$. Then there exist $\e>0$ and at least $b$-many maximal ergodic measures $\mu_1,\ldots,\mu_b$ of $\Phi$ such that 
$\{\rho(\mu_k)\}$ is a basis of $H_1(M,\R)$ and 
$$\dist(\supp\mu_k,\Light(M,[g]))\ge \e$$
for all $1\le k\le b$.
\end{prop1}

The Lipschitz continuity of the time separation is the subject of section \ref{S4} with the main result being theorem \ref{T18a}. 
\begin{theo1}
Let $(M,g)$ be of class $A_1$. Then for all $\e>0$ there exist constants $K(\e),L(\e)<\infty$ 
such that $(x,y)\mapsto d(x,y)$ is $L(\e)$-Lipschitz on $\{(x,y)\in \overline{M}\times\overline{M}|\,y-x\in 
\T_\e\setminus B_{K(\e)}(0)\}$.
\end{theo1}

The idea to the proof of theorem 
\ref{T18a} is contained in \cite{gaho} and goes back to \cite{es}. We verify the ``timelike co-ray'' condition from \cite{gaho} with the method of proposition \ref{P6}. 
Then the Lipschitz continuity of the time separation follows in the same manner as in \cite{gaho} the Lipschitz continuity of the Busemann functions. 

We conclude these notes with two appendixes. The first one collects the necessary results from earlier work. The second one discusses the notion of causal 
curves and is intended as a motivation for the definitions in section \ref{c41}. 

\emph{Global assumption:} We assume that the manifolds $M$ are equipped with a fixed complete Riemannian metric $g_R$.

\section{Uniformly Vicious Spacetimes}\label{c41}

Before we discuss uniformly vicious spacetimes, we want to note some facts about vicious spacetimes. This is intended as a motivation for the 
subsequent definition of uniform viciousness.

From this point on we will consider $S^1$ as the factor $\R/\Z$. Since $S^1$ becomes a Lie group (as a factor of $(\R,+)$ by $(\Z,+)$) in this way, the 
sum of $s,t\in S^1$ is naturally defined as $s+t:=\overline{s}+\overline{t}+\Z$, where $\overline{s}$ and $\overline{t}$ are real numbers representing 
$s$ and $t$.

Define on the real line the usual metric structure $(\overline{s},\overline{t})\mapsto |\overline{s}-\overline{t}|$. The projection $\pi_{S^1}\colon \R\to S^1$ 
naturally induces a metric structure on $S^1$, which we will denote by $|.|$ as well, i.e. 
$$|s-t|:=\min\{|\overline{s}-\overline{t}||\;\pi_{S^1}(\overline{s})=s, \pi_{S^1}(\overline{t})=t\}.$$
Set $[t-\e,t+\e]:=\{s\in S^1|\; |s-t|\le \e\}$ and $(t-\e,t+\e):=\{s\in S^1|\; |s-t|<\e\}$ for $\e> 0$ and $t\in S^1$. 

First we want to broaden the notion of timelike curves. 
\begin{definition}
Let $(M,g)$ be a Lorentzian manifold. A curve $\gamma\colon I\to M$ is called {\it essentially timelike} if for each $t\in I$ there exist $\delta>0$ and a 
convex normal neighborhood $U$ of $\gamma(t)$ with $\gamma((t-\delta,t+\delta))\subset U$ such that $(\gamma(\sigma),\gamma(\tau))\in 
I_U$ for all $\sigma,\tau\in (t-\delta,t+\delta)$ .

A loop $\gamma\colon S^1\to M$ is essentially timelike if the curve $\overline{\gamma}:=\gamma\circ \pi_{S^1}\colon \R\to M$ is essentially timelike.
\end{definition}

Note that every essentially timelike curve is causal. It is clear that any timelike curve is essentially timelike (recall that we assumed every timelike curve 
to be smooth with timelike tangents). Note that there is no analog to proposition \ref{P00}, in the sense that a curve is essentially timelike if and only if it 
is causal and the $g_R$-arclength parameterization satisfies $\dot{\gamma}(t)\in \Time(M,[g])$ for almost every $t$. For example, consider in Minkowski 
$3$-space $(\R^3,-dt^2+dx^2+dy^2)$ the curve $\gamma(t):=(t,\cos(t),\sin(t))$. We have $\dot{\gamma}(t)\in \Light(\R^3,[-dt^2+dx^2+dy^2])$ for all $t$ 
and obviously $\gamma$ is not a pregeodesic since its trace is not a straight line. We have $(-dt^2+dx^2+dy^2)(\gamma(t)-\gamma(s),\gamma(t)-
\gamma(s))<0$ for every $s\neq t$. This can be seen via two different ways. The first is rather algebraic and considers the tangents to $\gamma$.
One should be aware that $(-dt^2+dx^2+dy^2)(\dot{\gamma},\dot{\gamma})$ vanishes of third order in $t=0$. The second is geometric and rather 
simple. One knows that $\zeta(b)\in I^+(\zeta(a))$ for every future pointing causal curve $\zeta\colon [a,b]\to M$ that is not a lightlike geodesic. 
Since $\gamma$ lies in Minkowski space and is not a geodesic we know that $\gamma(t)\in I^+(\gamma(s))$ for all $s<t$. In Minkowski space 
this condition is equivalent to the assertion. Therefore $\gamma$ is essentially timelike, but no tangent of $\gamma$ is timelike. 

The definition of essential timelikeness is motivated by the observation following from proposition \ref{P01} that every essentially timelike curve can be 
deformed, with fixed endpoints, into a timelike curve via essentially timelike curves. The same is true for essentially timelike loops (a loop $\gamma$ will 
be called a timelike loop if the curve $\overline{\gamma}:=\gamma\circ\pi_{S^1}\colon \R\to M$ is timelike). Note again that any timelike loop is an 
essentially timelike loop. 

The following fact is an alternative definition of total viciousness. For a discussion see \cite{ms1}.

\begin{fact}\label{P4}
A spacetime $(M,g)$ is vicious if and only if there exists an essentially timelike loop $\gamma_p\colon S^1\to M$ with 
$\gamma_p(0)=p$ for all $p\in M$.
\end{fact}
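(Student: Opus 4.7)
\emph{Proof plan.} The implication $(\Leftarrow)$ is immediate from the deformation property highlighted in the paragraph preceding the fact: an essentially timelike loop based at $p$ can be deformed, with fixed base point, into a bona fide timelike loop through $p$, which exhibits $p\in I^+(p)$. Doing this for each $p\in M$ gives viciousness.

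For $(\Rightarrow)$ I would argue as follows. Given a vicious $(M,g)$ and $p\in M$, viciousness supplies a future-directed timelike curve $\alpha_p$ from $p$ to $p$, which after reparameterization I view as a loop $\gamma_p\colon S^1\to M$ with $\gamma_p(0)=p$. At every $t\in S^1\setminus\{0\}$ the loop is locally the restriction of a smooth timelike curve, and essential timelikeness at such $t$ is routine: any sufficiently small convex normal neighborhood $U$ of $\gamma_p(t)$ contains the local piece, and pairs $(\gamma_p(\sigma),\gamma_p(\tau))$ with $\sigma<\tau$ are joined inside $U$ by a timelike segment, hence lie in $I_U$.

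The only delicate point is $t=0$, where $\gamma_p$ may fail to be $C^1$ because the incoming tangent $\alpha_p'(1^-)$ need not match the outgoing tangent $\alpha_p'(0^+)$. Choose a convex normal neighborhood $U$ of $p$; inside such a $U$ the chronological relation $I_U$ is determined by the unique connecting geodesic and, as in Minkowski space, is transitive. Pick $\delta>0$ small enough that $\gamma_p((-\delta,\delta))\subset U$ and that both restrictions $\gamma_p|_{[-\delta,0]}$ and $\gamma_p|_{[0,\delta]}$ are smooth timelike curves lying in $U$. For $\sigma<\tau$ in $(-\delta,\delta)$ on the same side of $0$, the required membership $(\gamma_p(\sigma),\gamma_p(\tau))\in I_U$ is witnessed directly by the smooth timelike piece between them. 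For $\sigma<0<\tau$ I instead factor through $p$: one has $(\gamma_p(\sigma),p)\in I_U$ from the incoming branch and $(p,\gamma_p(\tau))\in I_U$ from the outgoing branch, and transitivity of $I_U$ supplies $(\gamma_p(\sigma),\gamma_p(\tau))\in I_U$.

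I expect the main obstacle to be exactly this verification at the junction $t=0$, since the lack of $C^1$-smoothness prevents any naive appeal to a tangent of $\gamma_p$ at $0$. The substantive ingredient that circumvents this is the transitivity of the chronological relation inside a convex normal neighborhood; once that is in hand, the remainder is bookkeeping, and the freedom to shrink $U$ is precisely what makes essential timelikeness (rather than ordinary timelikeness) the natural notion to characterize viciousness by loops.
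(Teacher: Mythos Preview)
Your argument is correct. The paper does not actually prove this fact; it introduces it with the sentence ``The following fact is an alternative definition of total viciousness. For a discussion see \cite{ms1}'' and then moves on. So there is no proof in the paper to compare against, and what you have written fills that gap cleanly.

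A couple of minor remarks. For $(\Leftarrow)$ you do not strictly need the deformation into a genuine timelike loop: already from the definition of essential timelikeness, the lift $\overline{\gamma}_p|_{[0,1]}$ satisfies $\gamma_p(\tau)\in I_U^+(\gamma_p(\sigma))$ locally, and chaining these via the global transitivity of $I^+$ gives $p\in I^+(p)$ directly. For $(\Rightarrow)$, your identification of the junction $t=0$ as the only nontrivial point is exactly right, and the appeal to transitivity of $I_U$ inside a convex normal neighborhood is the standard and correct device (this is precisely the content of the ``piecewise timelike implies timelike'' principle in convex neighborhoods). If one is worried that the timelike curve furnished by viciousness is only piecewise smooth, the same transitivity argument handles every corner, not just the one at $t=0$; alternatively one first smooths the interior and leaves only the base-point junction.
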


Now fact \ref{P4} motivates the following definition.
\begin{definition}\label{D4}
A spacetime $(M,g)$ is uniformly vicious if there exists a smooth manifold $\mathcal{M}$ and continuous proper map $H\colon \mathcal{M}\times S^1
\to M$ such that $H|_{\mathcal{M}\times \{0\}}$ is a smooth surjective submersion and the loops $H|_{\{x\}\times S^1}$ are essentially timelike for all 
$x\in \mathcal{M}$. We will call the map $H$ a uniform family.
\end{definition}
The idea behind definition \ref{D4} is that the essentially timelike loops in fact \ref{P4} can be chosen in a continuous manner along any subset of $M$. 
From the definition we immediately obtain:

\begin{prop}\label{P5-}
Any uniformly vicious spacetime is vicious.
\end{prop}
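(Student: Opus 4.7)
The plan is to simply unpack the two definitions and apply Fact~\ref{P4}. Let $p\in M$ be arbitrary. Since $H|_{\mathcal{M}\times\{0\}}$ is a surjective submersion, it is in particular surjective, so we can choose $x\in \mathcal{M}$ with $H(x,0)=p$. Define $\gamma_p\colon S^1\to M$ by $\gamma_p(t):=H(x,t)$. Then $\gamma_p$ coincides with the loop $H|_{\{x\}\times S^1}$, which by hypothesis is essentially timelike, and moreover $\gamma_p(0)=H(x,0)=p$.

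Since $p$ was arbitrary, we have produced an essentially timelike loop based at every point of $M$. Fact~\ref{P4} then yields the viciousness of $(M,g)$. Note that neither the properness of $H$ nor the full strength of the submersion assumption is used: only surjectivity of $H|_{\mathcal{M}\times\{0\}}$ and the essentially timelike character of the transverse loops enter the argument. The smoothness, properness and submersion conditions become relevant only later, when one needs to pass from mere existence of essentially timelike loops at each point to the uniform geometric control that distinguishes uniform viciousness from viciousness. There is no real obstacle in the present proposition; it is essentially a definition chase, and I expect the argument to occupy only a few lines.
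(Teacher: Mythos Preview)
Your argument is correct and matches the paper's approach: the paper simply states that the proposition follows immediately from the definition, and your proof is precisely the unpacking of that implication via Fact~\ref{P4}. Your remark that only surjectivity (not properness or the submersion property) is needed here is accurate and worth keeping.
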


Proposition \ref{P5-} is no longer true if we replace essential timelikeness by causality in definition \ref{D4}. For example consider the Lorentzian metric 
$\cos^2(2\pi x)(dx^2-dy^2)+\sin(2\pi x)dxdy$ on $\R^2$. The metric is obviously invariant under integer-translations. Therefore it induces a Lorentzian 
metric on the quotient $\R^2/\Z^2$. The closed causal curves $[x\equiv const]$ in $\R^2/\Z^2$ foliate the torus, but the spacetime is not vicious.

Examples of uniformly vicious spacetimes include flat tori or more generally spacetimes of the following structure: $(M,g)=(N\times S^1, -f^2dt^2+
\beta dt +h))$, where $f$ is any positive smooth function on $N\times S^1$, $\beta$ is a $1$-form on $N$ and $h$ is  Riemannian metric on $N$, both 
depending smoothly on the $S^1$-coordinate. Another set of example is provided by any Lorentzian metric on $S^{2n+1}$ such that the Hopf fibration is 
timelike. 

\begin{prop}
Assume that $M$ is diffeomorphic either to $T^2$, $K^2$ the Klein bottle or $S^1\times \R$. Then every vicious Lorentzian metric on $M$ is uniformly 
vicious.
\end{prop}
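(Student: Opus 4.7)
I would construct the uniform family explicitly in each case, taking $\mathcal{M}=M$ and base-point evaluation equal to $\id_M$. The Klein bottle $K^2$ is handled by passing to its orientable double cover $T^2$: the vicious metric lifts, and a uniform family constructed equivariantly under the covering involution descends to $K^2$. It therefore suffices to treat $M=T^2$ and $M=S^1\times\R$.

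In either of these cases let $\pi\colon\widetilde M\to M$ be the universal cover, $\Gamma=\pi_1(M)$ the abelian deck group, and $\widetilde g$ the lift of $g$. For $p\in M$ I set
\[
W(p) := \{\alpha\in\Gamma\setminus\{0\}\ :\ \alpha\cdot\widetilde p\in I^+(\widetilde p,\widetilde g)\},
\]
which by $\Gamma$-invariance of $\widetilde g$ is independent of the chosen lift $\widetilde p$. Three features are immediate: concatenation of future-directed timelike curves shows $W(p)$ is closed under addition, so a sub-semigroup of $\Gamma$; openness of $I^+$ in $\widetilde M\times\widetilde M$ shows each $U_\alpha:=\{p\in M:\alpha\in W(p)\}$ is open in $M$; and viciousness (fact \ref{P4}) yields $\bigcup_{\alpha\neq 0}U_\alpha=M$.

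The crucial step is to exhibit a single $\alpha_0\in\bigcap_{p\in M}W(p)$. For $S^1\times\R$ one has $\Gamma=\Z$; connectedness of $M$ together with the semigroup property pins down a common sign for the $W(p)$, say $W(p)\subset\Z_{>0}$, and since every sub-semigroup of $\Z_{>0}$ contains a tail $[N,\infty)\cap\Z$, lower semicontinuity of $W$ combined with periodicity in the $S^1$-direction furnishes a uniform such $N$, which I take as $\alpha_0$. For $T^2$ one has $\Gamma=\Z^2$; compactness yields a finite subcover $\{U_{\alpha_i}\}_{i=1}^n$, and I then leverage the inclusion $U_v\cap U_w\subset U_{v+w}$ (from the semigroup property) together with the fact that the $U_\alpha$ can only grow under replacement of $\alpha$ by a large multiple $k\alpha$ to iteratively build a single $\alpha_0$ with $U_{\alpha_0}=M$. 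Given $\alpha_0$, openness of $I^+$ permits a partition-of-unity construction of a smooth assignment $p\mapsto\gamma_p$ of essentially timelike curves from $\widetilde p$ to $\alpha_0\cdot\widetilde p$; reparametrizing each $\gamma_p$ over $S^1$ produces $H\colon M\times S^1\to M$ with $H(p,0)=p$ and essentially timelike fibers. The base-point evaluation is $\id_M$, a proper surjective submersion (trivially in the compact cases; in the non-compact case by uniform local boundedness of the $\gamma_p$).

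The main obstacle is the production of the common $\alpha_0$ in the $T^2$ case: the semigroups $W(p)$ vary with $p$, and the inclusion $U_v\cap U_w\subset U_{v+w}$ only combines on intersections rather than on unions, so the argument must carefully exploit both the rank-two structure of $\Gamma$ and the compactness of $T^2$. The restriction to these three diffeomorphism types is precisely what makes this combinatorial step feasible; in higher dimensions or with more complicated $\pi_1(M)$, uniform viciousness is strictly stronger than viciousness, as emphasized in the introduction.
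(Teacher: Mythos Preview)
Your outline has two genuine gaps.

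First, the production of a common $\alpha_0$. For $T^2$ you propose to ``iteratively build'' $\alpha_0$ from a finite subcover $\{U_{\alpha_i}\}$ using only $U_v\cap U_w\subset U_{v+w}$ and $U_\alpha\subset U_{k\alpha}$. These two moves do not combine the way you need: the first enlarges the index only on the \emph{intersection}, and the second stays on the ray through a single $\alpha$. Concretely, if $p\in U_{\alpha_1}\setminus U_{\alpha_2}$ then nothing you have written forces any $k_1\alpha_1+k_2\alpha_2$ with $k_2>0$ into $W(p)$, and nothing forces $\bigcup_{k\ge 1}U_{k\alpha_1}=M$. A common $\alpha_0$ does exist for $T^2$, but by a different route: the stable time cone $\T$ has nonempty interior (proposition~4.6 of \cite{suh103}), and Proposition~\ref{3.2} then puts every integer class sufficiently deep in $\T$ into $\bigcap_p W(p)$. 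For $S^1\times\R$ the difficulty is sharper: the manifold is non-compact, and ``periodicity in the $S^1$-direction'' gives no control over the minimal winding number as the basepoint escapes in the $\R$-direction, so your argument does not establish a global $\alpha_0$ at all.

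Second, even granting $\alpha_0$, a ``partition-of-unity construction of a smooth assignment $p\mapsto\gamma_p$'' does not work as stated: a convex combination of two timelike curves with common endpoints is in general not even causal, so the standard patching of local selections fails. The paper avoids both problems by a direct geometric construction. On $T^2$ it chooses two transversal future-pointing timelike vector fields $X,Y$ with linearly independent rotation vectors in $\T^\circ$; for each $p$ it follows the forward $X$-orbit and the backward $Y$-orbit until they meet (which they must, since the rotation directions differ), obtaining a timelike loop $\gamma_p$ in a fixed homology class. Transversality of $X$ and $Y$ makes the meeting point---and hence $\gamma_p$---depend continuously on $p$, and $H(p,0)=p$ holds by construction. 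On $S^1\times\R$ the paper partitions the cylinder into compact annuli bounded by smooth timelike circles and builds the family annulus by annulus, sidestepping the need for a single global $\alpha_0$.
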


\begin{proof}
We consider the case $M\cong T^2$ only. The case $M\cong K^2$ follows from the case $M\cong T^2$ since the orientation cover of $K^2$ is 
diffeomorphic to $T^2$. Then any uniform family for the lifted metric on $T^2$ gives rise to a uniform family on $K^2$ via the canonical projection 
$T^2\to K^2$. Note that any finite cover of a vicious Lorentzian manifold is vicious again.

The other case follows similarly, since any vicious Lorentzian metric on $S^1\times\R$ gives rise to a partition of $S^1\times\R$ into essentially disjoint 
annuli with smooth timelike boundary curves. The uniform family can then be constructed on each annulus separately. If the construction is carried out 
carefully the local uniform families will join to a global uniform family. 

Let $M\cong T^2$ and $g$ a vicious Lorentzian metric on $M$. We can assume w.l.o.g. that $(M,g)$ is time-oriented, due to the same argument 
reducing the case of $M\cong K^2$ to $M\cong T^2$. Then proposition 4.6 in \cite{suh103} implies that stable time cone of $(M,g)$ has nonempty open 
interior. 

Choose a pair of transversal future pointing timelike vector fields $X,Y$ on $M$ such that the rotation vectors of $X$ and $Y$ have different directions 
in the interior of the stable time cone. This can be easily achieved by choosing a future pointing timelike vector field $X$ and a nonsingular lightlike 
vector field $\overline{Y}$. The existence of $X$ follows from the time-orientability of $(M,g)$ and the existence of $Y$ follows from the time-orientability
of $(M,g)$ and the orientability of $M$ (compare \cite{suh102}). Since $M$ is compact, it is clear that the rotation vector of $X$ lies in the interior of the 
stable time cone. Then $Y:=X+\overline{Y}$ yields a future pointing timelike vector field with rotation vector different from $X$. 

Note that every forward orbit of $X$ intersects every backward orbit of $Y$ infinitely many times (The rotation vectors have different directions). 
Therefore by adjoining forward and backward orbits we can construct a continuous family of timelike loops covering $M$. Simply choose a fundamental 
class $\eta$ which is mapped into the interior of the cone over the roation vectors of $X$ and $Y$. Then there exists a positive multiple $\eta^k$ of 
$\eta$ such that for every $p\in M$ the intersection of the forward orbit of $X$ with the backward orbit of $Y$ through $p$ can be chosen such that the 
resulting timelike future pointing curve represents $\eta^k$. These curves depend continuously on $p$ since we have chosen $X$ and $Y$ to be 
transversal. Denote the constant arclength parameterization of the curve through $p$ on $S^1$ with $\gamma_p$.

Now we can choose $\mathcal{M}=M$ and $H\colon M\times S^1\to M$, $(p,t)\mapsto \gamma_p(t)$. $H$ is clearly continuous, the loops $\gamma_p$
are timelike and $H|_{M\times\{0\}}=\id|_M$, i.e. a smooth surjective submersion.
\end{proof}

It is easy to construct examples of vicious Lorentzian manifolds that are not uniformly vicious. Consider the quotient of Minkowski space 
$(\R^2,dx^2-dy^2)$ by the group of translations $\Gamma:=\Z\cdot (0,1)$. Remove from the quotient the point $[(0,0)]$. The claim is then that 
$M:=\R^2/\Gamma \setminus \{[(0,0)]\}$ together with the induced Lorentzian metric $g$ is vicious, but not uniformly vicious. The viciousness is obvious, 
since $(\R^2/\Gamma,dx^2-dy^2)$ is vicious. The other part in the claim is equally easy to be seen. Assume to the contrary that $(M,g)$ is uniformly 
vicious. Note that all loops in the uniform family represent the same fundamental class in $\pi_1(M)$. In the covering space $\R^2\setminus \Z\cdot 
(0,1)$ the lifts of loops in the uniform family are essentially timelike curves connecting a point $p=(p_1,p_2)\in \R^2\setminus \Z\cdot (0,1)$ with 
$p+(0,k)$ for some $k\in \Z$. If $p_1>|k|$ it is obvious that no causal curve connecting $p$ with $p+(0,k)$ can intersect the halfspace$\{(x,y)|\; x\le 0\}$. 
Therefore the fundamental class of each loop in the uniform family must belong to the subgroup $\{a^n|\;n\in\Z\}$, where $a$ is the fundamental class 
represented by the projections to $M$ of $t\mapsto (x,t)\in \R^2\setminus \Z\cdot (0,1)$, $t\in[0,1]$ and $x>0$. The same argument with $p_1<-|k|$ 
shows that the fundamental class of the loops in the uniform family must belong to the subgroup $\{b^n|\; n\in\Z\}$ generated by the the projections to 
$M$ of the curves $t\mapsto (x,t)\in \R^2\setminus\Z\cdot (0,1)$, $t\in[0,1]$ with $x<0$. Since the fundamental group of $M$ is the free group over the 
two generators $a,b$, we obtain $k=0$. But this is clearly a contradiction, since Minkowski space does not contain any causal loops.

In the rest of the section we want to prove the following ``smoothing'' result for uniform families.
\begin{prop}\label{P5a}
Let $(M,g)$ be a uniformly vicious spacetime. Then there exists a smooth uniform family $H'\colon \mathcal{M}\times S^1\to M$ such that 
$H'|_{\{x\}\times S^1}$ is a timelike loop, i.e. with timelike tangents, for every $x\in \mathcal{M}$.
\end{prop}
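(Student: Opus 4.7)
The plan is to produce $H'$ in two stages: first replace $H$ by a smooth uniform family $H_1$, then deform the essentially timelike loops of $H_1$ to genuinely timelike loops in a globally smooth, $x$-continuous way. For the smoothing I would rely on the observation that essential timelikeness is an open condition uniform on compact sets: given $\gamma=H|_{\{x_0\}\times S^1}$ essentially timelike, compactness of $S^1$ yields finitely many convex normal neighborhoods $U_1,\ldots,U_N$ and parameter intervals $J_1,\ldots,J_N$ covering $S^1$ with $\gamma(J_j)\subset U_j$ and $(\gamma(s),\gamma(t))\in I_{U_j}$ for $s,t\in J_j$. Since $I_{U_j}\subset U_j\times U_j$ is open, the same conditions persist under any sufficiently $C^0$-small perturbation, with tolerances that can be taken uniform on compact subsets of $\mathcal{M}$. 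Using a locally finite smooth partition of unity on $\mathcal{M}$ together with convolution in the $S^1$-variable mollified by a kernel supported away from $t=0$, I would produce a smooth $H_1:\mathcal{M}\times S^1\to M$ that is $C^0$-close to $H$ on compact sets (hence proper), agrees with $H$ on $\mathcal{M}\times\{0\}$, and whose loops are still essentially timelike.

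For the second stage, I fix $x_0\in\mathcal{M}$ and choose convex normal neighborhoods $U_1,\ldots,U_N$ together with a partition $0=t_0<t_1<\cdots<t_N=1$ of $S^1$ satisfying $H_1(x_0,[t_{j-1},t_j])\subset U_j$ and $(H_1(x_0,t_{j-1}),H_1(x_0,t_j))\in I_{U_j}$. On a small neighborhood $V_{x_0}\subset\mathcal{M}$ the same data still work, and I replace $H_1(x,\cdot)$ on $[t_{j-1},t_j]$ by the timelike geodesic chord from $H_1(x,t_{j-1})$ to $H_1(x,t_j)$, defined via $\exp$ in $U_j$ and smooth in $x$. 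Near each breakpoint $t_j$ I would interpolate the two adjacent chords smoothly on a small parameter interval $(t_j-\delta,t_j+\delta)$ via a bump function in the exponential chart at $H_1(x,t_j)$; openness of the future timelike cone in $TM$ lets me choose $\delta$ small enough, uniformly for $x$ near $x_0$, that the interpolated tangent remains timelike throughout. A second smooth partition of unity on $\mathcal{M}$, subordinate to a refinement of the cover by the $V_{x_0}$, then patches these local modifications into a globally smooth $H'$. Arranging every interpolation to be the identity on a neighborhood of $t=0$ preserves $H'|_{\mathcal{M}\times\{0\}}=H|_{\mathcal{M}\times\{0\}}$, and keeping the deformation $C^0$-small on compacta preserves properness.

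The hard part is stage~2: the breakpoints $t_j$, convex neighborhoods $U_j$, chord directions and interpolating bumps all depend on $x$, so one must assemble the local chord constructions into a single globally smooth map whose tangents are everywhere timelike. The openness of the timelike cone in $TM$ makes the partition-of-unity patching robust to the small perturbations introduced by interpolation, while the smoothing of stage~1 is the indispensable prerequisite for the exponential-chart data to depend smoothly on $x$ in the first place.
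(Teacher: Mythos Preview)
Your outline has the right ingredients but leaves two genuine gaps that the paper addresses explicitly.

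First, both your stages rely on operations---convolution in the $S^1$-variable, partition-of-unity patching over $\mathcal{M}$---that are only defined for maps into a linear space, not into a manifold $M$. You never say how to make sense of $\sum_i \lambda_i(x)\,H_i(x,t)$ when the $H_i$ take values in $M$. The paper handles this by first embedding $M\hookrightarrow\R^{m'}$, extending the Lorentzian metric to a tubular neighborhood $V$ so that the normal projection maps timelike vectors in $V$ to timelike vectors in $M$, and carrying out all smoothing and patching in $V$; only at the end is the result projected back to $M$. Without some such device your convolutions and your ``second partition of unity'' are not well defined.

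Second, your corner-smoothing claim in Stage~2 (``choose $\delta$ small enough so the interpolated tangent remains timelike'') is not justified. If you interpolate between two chord parametrizations with a bump $\lambda$ on $(t_j-\delta,t_j+\delta)$, the tangent picks up a term of size $|\dot\lambda|\cdot|t-t_j|\cdot|v_2-v_1|\sim|v_2-v_1|$, which does \emph{not} shrink with $\delta$; since the chord directions $v_1,v_2$ may lie arbitrarily close to the light cone (your $H_1$ is only essentially timelike, so $\partial_t H_1$ need not be timelike and the chords approximating it need not be uniformly inside the cone), this term can push you out. The paper sidesteps corner-smoothing entirely by reversing your order: it first replaces the loops by piecewise geodesics (Lemma~\ref{L40}), which makes them $\e$-timelike in the quantified sense $\dot\gamma\in\Time(M,[g])^\e$, and then convolves in both variables (Lemma~\ref{L42}). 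The point is that $\Time(\R^m,[g_p])^{\e-\delta}$ is a \emph{convex} cone, so an average of vectors in it stays in it; this gives smooth $(\e-\delta)$-timelike loops directly, with no corners to round off. Your qualitative observation that essential timelikeness is $C^0$-open is correct but too weak to survive the subsequent steps; the quantified $\e$-timelike notion and its convexity is the missing idea.
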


The main problem with proving such a statement is that if we use smooth approximations of $H$ by the usual method contained in \cite{hirsch}, 
we run into the risk of losing the property that $H|_{\{x\}\times S^1}$ is an essentially timelike loop. We show through a careful analysis that these
problems are futile. 

Choose a complete Riemannian metric $g_R$ on $M$. Define for $p\in M$ the positive number $\inj(M,g)_p$ as the supremum over all $0<\eta$ such 
that $B_\eta(p)$ is contained in a convex normal neighborhood of $p$ in $(M,g)$ with $g_R$-diameter bounded from above by $1$. The diameter 
condition is there for technical reasons. 

\begin{lemma}\label{L40}
Let $(M,g)$ be uniformly vicious. Then there exists a uniform family $H\colon \mathcal{M}\times S^1\to M$ smooth on a neighborhood of 
$\mathcal{M}\times\{0\}$ and such that the loops $H|_{\{x\}\times S^1}$ are piecewise geodesic for all $x\in \mathcal{M}$.
\end{lemma}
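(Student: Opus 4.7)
The plan is to replace each loop $H|_{\{x\}\times S^1}$ by a piecewise geodesic loop obtained by partitioning $S^1$ into subintervals on which the arc of $H$ lies in a convex normal neighborhood with chronologically related endpoints, and then substituting each arc with the unique timelike geodesic joining its endpoints in that neighborhood. The partition data must vary continuously in $x\in\mathcal{M}$ and, near $t=0$, smoothly enough to make $H'$ smooth on an open neighborhood of $\mathcal{M}\times\{0\}$.

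First I would fix $x_0\in\mathcal{M}$; by essential timelikeness of $H|_{\{x_0\}\times S^1}$ together with compactness of $S^1$, there exist a finite partition $0=s_0<s_1<\dots<s_N=1$ of $S^1$ and convex normal neighborhoods $U_0,\ldots,U_{N-1}$ of $g_R$-diameter at most $1$ such that $H(x_0,[s_i,s_{i+1}])\subset U_i$ and $(H(x_0,s_i),H(x_0,s_{i+1}))\in I_{U_i}$ for every $i$. Since $H$ is continuous and $I_{U_i}$ is open in $U_i\times U_i$, the same partition and neighborhoods remain valid on an open neighborhood $V_{x_0}$ of $x_0$ in $\mathcal{M}$. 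Paracompactness of $\mathcal{M}$ then yields a locally finite refinement $\{V_\alpha\}$ of $\{V_{x_0}\}$; passing to common refinements of the partitions on overlaps and smoothly interpolating the break points via a subordinate partition of unity produces continuous partition functions $\sigma_1(x)<\dots<\sigma_{N(x)-1}(x)$ in $S^1\setminus\{0\}$ together with an associated family of convex normal neighborhoods depending continuously on $x$.

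On each subinterval $[\sigma_i(x),\sigma_{i+1}(x)]$ I would define $H'(x,\cdot)$ as the unique $g$-geodesic in the corresponding convex normal neighborhood joining the values of $H'$ at the adjacent break points (which for intermediate break points are simply $H(x,\sigma_i(x))$), parameterized proportionally. Each segment is then a timelike future-directed geodesic by the chronological relation, and essential timelikeness at the break points follows from the standard observation (used in proposition \ref{P01}) that any non-geodesic causal concatenation inside a convex normal neighborhood is essentially timelike. To enforce smoothness on a neighborhood of $\mathcal{M}\times\{0\}$, I would treat the arc through $t=0$ specially: choose a smooth future-directed timelike vector field $X$ defined on an open neighborhood of the submanifold $H(\mathcal{M}\times\{0\})$ (which exists by time-orientability and a partition-of-unity smoothing of a local continuous timelike field), fix a smooth $\e(x)>0$, and set $H'(x,t):=\exp_{H(x,0)}\bigl(t\,X(H(x,0))\bigr)$ for $t\in(-\e(x),\e(x))$. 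This yields a smooth map on the open set $\{(x,t):|t|<\e(x)\}\supset\mathcal{M}\times\{0\}$ with $H'|_{\mathcal{M}\times\{0\}}=H|_{\mathcal{M}\times\{0\}}$, so the submersion property is inherited, and it is attached to the piecewise geodesic part at $\pm\e(x)$ by a short timelike geodesic in a common convex normal neighborhood of $H(x,0)$. Properness of $H'$ follows from that of $H$ together with the uniform $g_R$-diameter bound on the $U_i$.

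The main obstacle is arranging the local partition data into a globally compatible, continuously (let alone smoothly) varying family in spite of the number of break points depending on $x$. This is the point at which the ``careful analysis'' warned about in the preceding discussion enters: one exploits the openness from the local stability step (which allows free displacement of break points within a neighborhood) together with common refinements on chart overlaps and a smooth partition of unity, the convex normal neighborhoods having been chosen large enough to accommodate the resulting perturbations.
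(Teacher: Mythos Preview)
Your overall architecture is exactly the paper's: replace arcs by the unique timelike geodesics in convex normal neighborhoods of bounded $g_R$-diameter, treat a neighborhood of $t=0$ separately by flowing along a smooth timelike section over $H|_{\mathcal{M}\times\{0\}}$ (the paper's $\chi$ and geodesics $c_x$), and deduce properness from the diameter bound. The near-$t=0$ construction you describe is essentially identical to the paper's.

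The gap is precisely the step you flag as ``the main obstacle'' and then leave as a promise. Your proposed mechanism---``common refinements on chart overlaps and a smooth partition of unity''---does not work as stated: on $V_\alpha\cap V_\beta$ the partitions $P_\alpha$ and $P_\beta$ have different cardinalities, so there is no map into which a partition of unity can be fed to produce a single continuously varying tuple $\sigma_1(x)<\dots<\sigma_{N(x)-1}(x)$; the very notation presupposes what has to be built. Taking unions $P_\alpha\cup P_\beta$ on overlaps does not help, since on triple overlaps the interpolations from the three pairwise refinements are not forced to agree, and a locally finite cover of a noncompact $\mathcal{M}$ can require unbounded refinement depth.

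The paper's resolution is to impose a universal dyadic scheme rather than chart-dependent partitions. It stratifies $\mathcal{M}$ by nested open sets $U_N$ on which any two parameters with $|s-t|<2^{-N}$ have images in a common convex normal ball, encodes the $2^N$ dyadic break points by finite binary strings $\alpha\in\mathcal{B}_N$, and defines $\overline t_\alpha(x)$ as a $\phi_N$-weighted average of the dyadic values $\overline t_{\alpha_n}$ via ``forgetting'' maps $r_N\colon\mathcal{B}_N\to\mathcal{B}_{N-1}$. This makes every $x\mapsto \overline t_\alpha(x)$ smooth, forces $H_{N+1}|_{U_{N-1}}\equiv H_N|_{U_{N-1}}$ automatically, and handles the changing number of break points by allowing adjacent break points to coalesce (the corresponding geodesic segment is then constant). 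Once you have such a coherent dyadic family the rest of your argument goes through; what is missing from your proposal is exactly this device (or an equivalent one) that makes the ``careful analysis'' into an actual construction.
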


\begin{proof}
Let $H_0\colon\mathcal{M}\times S^1\to M$ be a uniform family. Define 
$$i_x:=\inf_{t\in S^1}\{\inj(M,g)_{H_0(x,t)}\}$$ 
for $x\in \mathcal{M}$.  Note that $i_x$ is lower semicontinuous and 
positive for all $x\in\mathcal{M}$. Therefore we can choose a continuous function $\underline{i}\colon \mathcal{M}\to 
(0,\infty)$ with $\underline{i}(x)\le i_x$ for all $x\in \mathcal{M}$. For $N\in \N$ set 
\begin{align*}
U_N:=\{x\in N|\; \dist\left(H_0(x,s),H_0(x,t)\right)&<(2^{-1}-2^{-N})\underline{i}(x),\\
&\forall s,t\in S^1:\;|s-t|< 2^{-N}\}.
\end{align*}
$\{U_N\}_{N\in\N}$ is an open cover of $\mathcal{M}$ and the condition ``$\dist\left(H_0(x,s),H_0(x,t)\right)
<(2^{-1}-2^{-N})\underline{i}(x)$'' ensures that $\overline{U_N}\subset U_{N+1}$. Choose $W_N
\subset\mathcal{M}$ open with $\overline{U_N}\subset W_N\subset \overline{W_N}\subset U_{N+1}$ for every $N\in \N$. 
Then $\{U_{N+1}\setminus \overline{W_{N-1}}\}_{N\in \N}$ forms an open, locally finite cover of $\mathcal{M}$. Choose
a partition of unity $\{\phi_N\}_{N\in\N}$ subordinate to $\{U_{N+1}\setminus \overline{W_{N-1}}\}_{N\in \N}$. 
We use the following index convention
$$\supp \phi_N\subset U_{N+1}\setminus \overline{W_{N-1}}.$$

Consider the set $\mathcal{B}$ of nonzero $0$-$1$-sequences $\alpha=a_1\,a_2\ldots$ which become constant to $0$ 
eventually. Further consider the subset $\mathcal{B}_N$ of $\mathcal{B}$ whose elements are identically $0$ after 
the $N$-th 
digit. Denote by $\beta_N$ the sequence which is identically $0$ except for the $N$-th digit.
Set $r_1\equiv \id|_{\mathcal{B}_1}$ and for $N\ge 2$ define the following operations $r_N\colon \mathcal{B}_N
\to \mathcal{B}_{N-1}$. For $\alpha\in \mathcal{B}_N\setminus \{\beta_N\}$ define $r_N(\alpha)$ by setting the 
$N$-th digit to $0$. For $\alpha=\beta_N$ set $r_N(\alpha)=\beta_{N-1}$.

For $\alpha\in \mathcal{B}$ choose $N\in\N$ with $\alpha\in \mathcal{B}_N$. Set $\alpha_n:=
r_{n+1}\circ\cdots\circ r_N(\alpha)$ for $n\le N-1$ and $\alpha_n:=\alpha$ for $n\ge N$. 
Note that this definition does not depend on the choice of $N$. Set $\overline{t}_\alpha :=
\sum_{n=1}^\infty a_n 2^{-n}$, where $\alpha=a_1a_2\ldots$, and 
$$\overline{t}_\alpha(x):=\sum_{n=1}^\infty \overline{t}_{\alpha_n}\phi_n(x)\in [0,1).$$
Note that $x\mapsto \overline{t}_\alpha(x)$ is smooth. The numbers $\overline{t}_\alpha(x)$ naturally define 
classes $t_\alpha(x)\in S^1$. For the rest of the proof we will denote real numbers in $\R$ with $\overline{t}$ and 
with $t$ their projections to $S^1$.

Denote with $\omega_N$ the sequence which is identical to $1$ for all digits smaller than or equal to $N$, and $0$ 
everywhere else. Define the successor operation 
$$\alpha=a_1a_2\ldots\in \mathcal{B}_N\mapsto s_N(\alpha)=a'_1a'_2\ldots\in \mathcal{B}_N$$
as follows. If $\alpha=\omega_N$ set $s_N(\alpha)=\omega_N$. If $a_N=0$ set $a'_i=a_i$ for $i\le N-1$ and $a'_N=1$. 
If $\alpha\neq \omega_N$ and $a_N=1$ choose $1<k\le N$ minimal such that $a_i=1$ for all $k\le i\le N$. In this case 
set $a'_i=0$ for $k\le i\le N$, $a'_{k-1}=1$ and $a'_j=a_j$ for $j<k-1$. 

Define the map $H_N\colon U_N\times S^1\to M$ as follows. Let $x\in U_N$. For $\alpha \in \mathcal{B}_N$ define 
the curve $\gamma_{\alpha,x}\colon [\overline{t}_\alpha(x), \overline{t}_{s_N(\alpha)}(x)]\to M$ to be the unique geodesic 
connecting $H_0(x,t_\alpha(x))$ with $H_0(x,t_{s_N(\alpha)}(x))$. Note that $\overline{t}_\alpha(x)\le \overline{t}_{s_N(\alpha)}(x)$ and 
$|t_\alpha(x)-t_{s_N(\alpha)}(x)|\le 2^{-n}$ for $x\in U_n$. Therefore 
$$\dist(H(x,t_\alpha(x)),H(x,t_{s_N(\alpha)}(x)))\le i_x/2$$
and $\gamma_{\alpha,x}$ is well defined. By the definition of the uniform family, the geodesic $\gamma_{\alpha,x}$  
is future pointing timelike if and only if $\gamma_{x,\alpha}$ is non-constant if and only if 
$\overline{t}_{s_N(\alpha)}(x)>\overline{t}_\alpha(x)$. 

Define $\overline{t}_\omega,\overline{t}_\beta\colon \mathcal{M}\to \R$ by setting $\overline{t}_\beta|_{U_{N-1}}:= 
\overline{t}_{\beta_N}|_{U_{N-1}}$ and $\overline{t}_\omega|_{U_{N-1}}:=\overline{t}_{\omega_N}|_{U_{N-1}}$. 
$\overline{t}_{\beta}$ and $\overline{t}_\omega$ are well defined smooth functions, since 
$\overline{t}_{\beta_{N+1},\omega_{N+1}}|_{U_{N-1}}\equiv\overline{t}_{\beta_N,\omega_{N}}|_{U_{N-1}}$, and 
induce smooth functions $t_{\beta},t_\omega \colon \mathcal{M}\to S^1$. Now define $H_N\colon U_N\times S^1\to M$
(note that $r_N(\omega_N)=\omega_{N-1}$)
$$H_N(x,t):=\begin{cases}H_0(x,t)&\text{ if } t\in [t_\omega(x),t_\beta(x)]\text{ and}\\ 
                         \gamma_{\alpha,x}(t)&\text{ if }t\in [t_\alpha(x),t_{s_N(\alpha)}(x)]
\text{ for $\alpha\in\mathcal{B}_N$.}
 \end{cases}$$
$H_N$ is continuous by construction and we have $H_{N+1}|_{U_{N-1}}\equiv H_{N}|_{U_{N-1}}$. Therefore we can define a
map $H'\colon \mathcal{M}\times S^1\to M$ with $H'|_{U_{N-1}}\equiv H_N|_{U_{N-1}}$.  $H'$ satisfies the 
claim of the lemma on $(\mathcal{M}\times S^1)\setminus U$, where $U:=\{(x,t)\in\mathcal{M}\times S^1|\; t\in 
(t_\omega(x),t_\beta(x))\}$.

The reason why we have not altered $H_0|_{U}$ so far, is that we want to retain the property that 
$H_0|_{\mathcal{M}\times\{0\}}$ is a surjective submersion. In order to do so we have to be more careful with 
our construction on $U$. Choose for every $x\in \mathcal{M}$ a geodesically convex normal neighborhood $V_x$ of 
$H_0(x,0)$ with $\diam_{g_R}(V_x)\le 1$ such that $B_{i_x}(H_0(x,0))\subset V_x$. Next choose a smooth map 
$\chi\colon \mathcal{M}\times \{0\}\to \Time(M,[g])$ over $H_0|_{\mathcal{M}\times \{0\}}$, i.e. $\chi(x,0)\in 
\Time(M,[g])_{H_0(x,0)}$ for all $x\in \mathcal{M}$. Consider the (future pointing timelike) geodesic 
$c_x$ starting in $H_0(x,0)$ with direction $\chi(x,0)$ and for $N\in \N$ the set 
$$V_N:=\left\{x\in \mathcal{M}|\; c_x\left(\pm N^{-1}\right)\in I^\mp_{V_x}(H_0(x,t_{\beta,\omega}(x)))
\text{ and }N^{-1}<|\overline{t}_{\beta,\omega}(x)|\right\}.$$
$\{V_N\}_{N\in\N}$ is an open cover of $\mathcal{M}$ and we have $\overline{V_N}\subset V_{N+1}$. Choose open sets 
$Z_N$ with $\overline{V_N}\subset Z_N\subset \overline{Z_N}\subset V_{N+1}$. 
Then $\{V_{N+1}\setminus \overline{Z_{N-1}}\}_{N\in \N}$ forms an open, locally finite covering of $\mathcal{M}$. Choose
a partition of unity $\{\psi_N\}_{N\in\N}$ subordinate to this covering. We again use the index convention
$$\supp \psi_N\subset V_{N+1}\setminus \overline{Z_{N-1}}.$$
Set $\overline{\tau}_x:=\sum_{N=1}^\infty (N+1)^{-1}\psi_N(x)\in(0,1)$ and denote with $\tau_x$ the natural projection to $S^1$. 
For $x\in \mathcal{M}$ denote with 
$$\zeta^+_x\colon [\overline{\tau}_x,\overline{t}_{\beta}(x)]\to M\text{ and } \zeta^-_x\colon 
[\overline{t}_\omega(x),1-\overline{\tau}_x]\to M$$
the unique geodesics connecting $c_x(\overline{\tau}_x)$ with $H_0(x,t_\beta(x))$ and $H_0(x,t_\omega(x))$ with 
$c_x(-\overline{\tau}_x)$. Note that $\zeta^\pm_x$ are futurepointing timelike for all $x\in\mathcal{M}$. Define
$$H''(x,t):=\begin{cases}
            H'(x,t)&\text{ if }t\in [t_\beta(x),t_\omega(x)],\\
	    \zeta^+_x(\overline{t})&\text{ if }\overline{t}\in [\overline{\tau}_x,\overline{t}_{\beta}(x)]\\
	    \zeta^-_x(\overline{t})&\text{ if }\overline{t}\in[\overline{t}_\omega(x),1-\overline{\tau}_x]\text{ and}\\
	    c_x(\overline{t})&\text{ if }\overline{t}\in [0,\overline{\tau}_x]\cup [1-\overline{\tau}_x,1].
\end{cases}$$
$H''|_{\{x\}\times S^1}$ is a piecewise geodesic, essentially timelike loop for all $x\in \mathcal{M}$. Since 
$H''$ coincides with $H_0$ on $\mathcal{M}\times\{0\}$, we know that $H''|_{\mathcal{M}\times\{0\}}$ is a surjective submersion.

By construction we have $\sup_{(x,t)}\dist(H_0(x,t),H''(x,t))\le 1$. Recall that all convex normal neighborhoods 
were assumed to have $g_R$-diameter bounded by $1$. Therefore we have  
$$H''^{-1}(K)\subset H^{-1}(B_1(K))$$
for every compact set $K\subset M$. Consequently $H''$ is a proper map as well.
\end{proof}

Denote with $\Time(M,[g])$ the set of future pointing timelike vectors relative $g$ in $TM$ and with $\Light(M,[g])$ the set of future pointing lightlike 
vectors. Further denote with $\Time(M,[g])^\e$ the set of future pointing vectors $v\in \Time(M,[g])$ such that $\dist(v,\Light(M,[g]))\ge \e |v|$.

\begin{definition}
Let $(M,g)$ be a spacetime and $\e>0$. 

(i) A future pointing curve $\gamma\colon I\to M$ is $\e$-timelike if  
$$\dot{\gamma}(t)\in \Time(M,[g])^\e_{\gamma(t)}$$
for one (hence every) $g_R$-arclength parameterization $\gamma\colon \widetilde{I}\to M$ and almost all $t\in \widetilde{I}$. 

(ii) A past pointing curve is said to be $\e$-timelike if it is $\e$-timelike for the reversed time-orientation.
\end{definition}

This is readily extended to general Lorentzian manifolds.
\begin{definition}
(i) A causal curve in a general Lorentzian manifold is said to be $\e$-timelike if one (hence every) lift to the timeorientation cover is $\e$-timelike in the 
lifted metric. 

(ii) A loop $\gamma\colon S^1\to M$ is $\e$-timelike if the lift $\overline{\gamma}\colon [0,1]\to M$ is $\e$-timelike.
\end{definition}
The definition of $\e$-timelikeness is independent of the Riemannian metric in the following sense.

\begin{fact}
Let $g_R,\widetilde{g}_R$ be equivalent Riemannian metrics on $M$, i.e. there exist $0<c\le C<\infty$ such that 
$c\widetilde{g}_R\le g_R\le C\widetilde{g}_R$. Then every, relative to $\widetilde{g}_R$, $\widetilde{\e}$-timelike curve 
is $\frac{c}{C}\widetilde{\e}$-timelike relative to $g_R$. 
\end{fact}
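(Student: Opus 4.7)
I would argue pointwise at each $p\in M$ and exploit the scale invariance of the $\e$-timelikeness condition. Since $\Light(M,[g])_p$ is a cone in $T_pM$, the inequality $\dist(v,\Light(M,[g])_p)\ge \e|v|$ is preserved under positive rescaling of $v$; hence $\e$-timelikeness at a time $t$ depends only on the direction of $\dot\gamma(t)$, and not on whether the curve is arclength-parameterized with respect to $g_R$ or $\widetilde g_R$. This reduces the fact to a purely algebraic comparison of two inner products on each tangent space.

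Reading $c\widetilde g_R\le g_R\le C\widetilde g_R$ at the level of quadratic forms, the induced norms satisfy
$$\sqrt{c}\,|v|_{\widetilde g_R}\le |v|_{g_R}\le \sqrt{C}\,|v|_{\widetilde g_R}$$
for every $v\in T_pM$. Applying the lower bound to the vector $v-w$ for arbitrary $w\in\Light(M,[g])_p$ and taking the infimum over $w$ yields
$$\dist_{g_R}(v,\Light(M,[g])_p)\ge \sqrt{c}\,\dist_{\widetilde g_R}(v,\Light(M,[g])_p),$$
and combining this with $|v|_{g_R}\le \sqrt{C}\,|v|_{\widetilde g_R}$ gives
$$\frac{\dist_{g_R}(v,\Light(M,[g])_p)}{|v|_{g_R}}\ge \sqrt{\tfrac{c}{C}}\cdot\frac{\dist_{\widetilde g_R}(v,\Light(M,[g])_p)}{|v|_{\widetilde g_R}}.$$
Evaluated at $v=\dot\gamma(t)$ for a.e.\ $t$ along a $\widetilde g_R$-arclength parameterization, the right hand side is at least $\widetilde\e$; by the scale invariance noted above, the same lower bound transfers to a $g_R$-arclength parameterization, which is exactly the conclusion.

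The only point requiring comment is the numerical constant. My computation produces the factor $\sqrt{c/C}$, whereas the fact as stated asserts $c/C$. This is not a mathematical obstacle but a convention issue: if one reads the hypothesis $c\widetilde g_R\le g_R\le C\widetilde g_R$ as a comparison of lengths (i.e., of the norms induced by the two metrics) rather than of quadratic forms, the square roots disappear and the constant is $c/C$ as written. Either reading makes the core argument the single pointwise estimate above; there is no genuinely hard step.
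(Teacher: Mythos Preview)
Your argument is correct and follows essentially the same route as the paper: a pointwise comparison of the two norms on each tangent space, yielding an inequality of the form
\[
\frac{\dist_{g_R}(v,\Light(M,[g]))}{|v|_{g_R}}\;\ge\;(\text{const})\cdot\frac{\dist_{\widetilde g_R}(v,\Light(M,[g]))}{|v|_{\widetilde g_R}}.
\]
The only cosmetic difference is that the paper handles the passage between the two arclength parameterizations explicitly via the chain rule for the bi-Lipschitz reparameterization, whereas you dispose of it by observing that the ratio above is homogeneous of degree zero in $v$; both are equivalent. Your remark on the constant is apt: the paper evidently reads $c\widetilde g_R\le g_R\le C\widetilde g_R$ as a comparison of norms rather than of quadratic forms, which accounts for the factor $c/C$ instead of $\sqrt{c/C}$.
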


\begin{proof}
Let $\gamma$ be a causal curve in $(M,g)$. Denote the arclength parameter relative to $\widetilde{g}_R$ with 
$\widetilde{s}$ and the one relative to $g_R$ with $s$. Then the parameter change $\phi \colon \widetilde{s}\mapsto s$ 
is a bi-Lipschitz map and $\frac{d}{d\widetilde{s}}\gamma=\frac{d}{ds}\gamma \frac{d}{d\widetilde{s}}\phi$ almost 
everywhere. Since 
$$\dist\nolimits^{g_R}(.,\Light(M,[g]))\le C\dist\nolimits^{\widetilde{g}_R}(.,\Light(M,[g]))\text{ and }|.|^{g_R}\ge c|.|^{\widetilde{g}_R},$$ 
the claim follows immediately.
\end{proof}

Now we can state the analog of proposition \ref{P00} for $\e$-timelike curves.

\begin{prop}
Given a compact subset $K$ of a spacetime $M$ and a future pointing curve $\gamma\colon I\to K$. Then 
$\gamma$ is $\e$-timelike for some $\e>0$ if and only if $\exp^{-1}_{\gamma(s)}(\gamma(t))\in\Time(M,[g])^\delta$
for some $\d>0$ and all $s<t\in I$ sufficiently close.
\end{prop}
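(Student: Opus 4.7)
The plan is to prove the two directions separately: $(\Leftarrow)$ is a direct a.e.\ limit argument, while $(\Rightarrow)$ uses a convex-cone integration. For $(\Leftarrow)$, parameterize $\gamma$ by $g_R$-arclength. At every differentiability point $t_0$ of $\gamma$ (so almost every $t_0$ by Rademacher),
\[
\dot\gamma(t_0)=\lim_{t\to t_0^+}\frac{\exp^{-1}_{\gamma(t_0)}(\gamma(t))}{t-t_0},
\]
since $\exp^{-1}_{\gamma(t_0)}$ is a chart at $\gamma(t_0)$ whose differential at the base point is the identity. Each ratio lies in $\Time(M,[g])^\delta_{\gamma(t_0)}$ by hypothesis, and the closure of this set in $T_{\gamma(t_0)}M$ is $\Time(M,[g])^\delta_{\gamma(t_0)}\cup\{0\}$; since $|\dot\gamma(t_0)|_{g_R}=1\ne 0$, one obtains $\dot\gamma(t_0)\in\Time(M,[g])^\delta_{\gamma(t_0)}$, and $\gamma$ is $\delta$-timelike.

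For $(\Rightarrow)$, the key new ingredient is that $\Time(M,[g])^\e_p$ is a convex cone in $T_pM$ at every $p$. In any $g_p$-orthonormal frame with $e_0$ future timelike, the condition $\dist_{g_R}(v,\Light(M,[g])_p)\ge\e|v|_{g_R}$ is equivalent (up to adjusting $\e$ by a fixed factor) to $-g_p(v,v)\ge c(\e)|v|_{g_R}^2$ together with $v^0>0$, which is a convex quadratic cone. By compactness of $K$ we choose a uniform $r>0$ such that for every $p\in K$ the ball $B_r(p)$ lies in a convex normal neighborhood of $p$, and such that, by uniform continuity of $(p,q)\mapsto (d\exp^{-1}_p)_q$ together with $(d\exp^{-1}_p)_p=\id$, the linear map $(d\exp^{-1}_p)_q\colon T_qM\to T_pM$ sends $\Time(M,[g])^\e_q$ into $\Time(M,[g])^{\e/2}_p$ whenever $p,q\in K$ and $\dist_{g_R}(p,q)<r$.

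Now for $s<t\in I$ with $t-s<r$, set $v(\tau):=\exp^{-1}_{\gamma(s)}(\gamma(\tau))\in T_{\gamma(s)}M$. Since $\gamma$ is Lipschitz and $\exp^{-1}_{\gamma(s)}$ is smooth, $v$ is absolutely continuous, so
\[
v(t)=\int_s^t(d\exp^{-1}_{\gamma(s)})_{\gamma(\tau)}\,\dot\gamma(\tau)\,d\tau.
\]
By $\e$-timelikeness and the cone transfer of the previous paragraph, the integrand lies a.e.\ in the convex cone $\Time(M,[g])^{\e/2}_{\gamma(s)}$, so the integral lies in its closure $\Time(M,[g])^{\e/2}_{\gamma(s)}\cup\{0\}$; since $(d\exp^{-1}_{\gamma(s)})_{\gamma(\tau)}$ is invertible and $|\dot\gamma|_{g_R}=1$ a.e., the integrand is a.e.\ nonzero, forcing $v(t)\ne 0$ and thus $v(t)\in\Time(M,[g])^{\e/2}_{\gamma(s)}$; this gives $\delta=\e/2$. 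The main obstacle is the combined bookkeeping in this direction: fiberwise convexity of $\Time(M,[g])^\e_p$, uniform continuity of $(d\exp^{-1})$ across $K$ used to transfer cones between nearby tangent spaces, and the absolute continuity of $v(\tau)$ needed for the integral representation.
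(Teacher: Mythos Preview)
Your proof is correct, and the backward direction $(\Leftarrow)$ matches the paper's argument. The forward direction $(\Rightarrow)$, however, takes a genuinely different route.

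The paper argues $(\Rightarrow)$ via the Lorentzian length functional and the elementary estimate
\[
\widetilde{c}\,|v|\dist(v,\Light(M,[g])_p)\le |g_p(v,v)|\le \widetilde{C}\,|v|\dist(v,\Light(M,[g])_p).
\]
From $\e$-timelikeness one gets $L^g(\gamma|_{[s,t]})\ge \e_1 L^{g_R}(\gamma|_{[s,t]})\ge \e_1\dist(\gamma(s),\gamma(t))$, hence $d_{U_s}(\gamma(s),\gamma(t))\ge \e_1\dist(\gamma(s),\gamma(t))$; since $d^2=-g(\exp^{-1}_{\gamma(s)}(\gamma(t)),\exp^{-1}_{\gamma(s)}(\gamma(t)))$, a second application of the estimate together with the bi-Lipschitz bound on $\exp^{-1}$ yields $\delta=\e_1^2\e_2^2/\widetilde C$. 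So the paper passes through the \emph{local time separation} rather than integrating tangent vectors.

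Your argument instead exploits that $\Time(M,[g])^\e_p$ is a convex cone, transfers $\dot\gamma(\tau)$ from $T_{\gamma(\tau)}M$ to $T_{\gamma(s)}M$ via $(d\exp^{-1}_{\gamma(s)})_{\gamma(\tau)}$, and integrates inside the cone. This is closer in spirit to the convex-cone integration used later in the paper (Lemma~\ref{L42}). One remark: your justification of convexity is slightly imprecise---the equivalence you state between $\dist(v,\Light)\ge\e|v|$ and $-g_p(v,v)\ge c(\e)|v|^2$ is only a two-sided sandwich, not an identity of sets. This does no harm, since either one uses the sandwiching quadratic cones (losing a harmless constant), or one observes directly that $v\mapsto \dist(v,\Light_p)$ is concave on the future cone (distance to the complement of a convex set) while $|v|_{g_R}$ is convex, so the superlevel set is convex. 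Your approach avoids invoking the local time separation and the Gauss lemma identity $d^2=-g(\exp^{-1},\exp^{-1})$; the paper's approach avoids the cone-transfer bookkeeping and gives an explicit $\delta$ in terms of the estimate constants.
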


For the proof we will need the following elementary estimates. There exist constants $0<\widetilde{c},\widetilde{C}<\infty$, depending only on 
$g$ and $g_R$ and $K$, such that 
\begin{equation}\label{E1}
\widetilde{c}|v|\dist(v,\Light(M,[g])_p)\le |g_p(v,v)|\le \widetilde{C} |v|\dist(v,\Light(M,[g])_p)
\end{equation}
for all $p\in K$ and all future pointing $v\in TM_p$. The proof is elementary and can be found in \cite{su}.

\begin{proof}
If we assume $\exp^{-1}_{\gamma(s)}(\gamma(t))\in \Time(M,[g])^\delta$ for some $\d>0$ and all $s<t\in I$ sufficiently 
close, we obtain $\frac{d}{dt}\gamma(s)\in \Time(M,[g])^\d$ for almost all $t$ (w.l.o.g. we can assume that $\gamma$
is parameterized w.r.t. $g_R$-arclength). Therefore $\gamma$ is $\d$-timelike. 

Conversely assume that $\gamma$ is $\e$-timelike for some $\e>0$. Then we have $L^g(\gamma)\ge \e_1
L^{g_R}(\gamma)$ for some $\e_1>0$. This follows from \eqref{E1} for $v=\dot{\gamma}$ and $\e_1:=\sqrt{\widetilde{c}\e}$. 

Now consider $s<t\in I$ such that $\gamma|_{[s,t]}$ is contained in a compact, convex normal neighborhood $U_s$ 
of $\gamma(s)$ such that $(\overline{U}_s,g|_{\overline{U}_s})$ is globally hyperbolic and $\exp^{-1}_{\gamma(s)}$ is 
bi-Lipschitz on $U_s$. Then we have
$$d_{U_s}(\gamma(s),\gamma(t))\ge L^g(\gamma|_{[s,t]})\ge \e_1L^{g_R}(\gamma|_{[s,t]})
\ge \e_1\dist(\gamma(s),\gamma(t)).$$
Since $\exp^{-1}_{\gamma(s)}$ is bi-Lipschitz on $U_s$, we have 
$$\dist(\gamma(s),\gamma(t))\ge \e_2|\exp^{-1}_{\gamma(s)}(\gamma(t))|$$
for some $\e_2>0$ and therefore
\begin{align*}
d^2(\gamma(s),\gamma(t))&=-g(\exp^{-1}_{\gamma(s)}(\gamma(t)),\exp^{-1}_{\gamma(s)}(\gamma(t)))\\
&\le \widetilde{C}|\exp^{-1}_{\gamma(s)}(\gamma(t))|\dist(\exp^{-1}_{\gamma(s)}(\gamma(t)),\Light(M,[g])).
\end{align*}
The claim now follows for $\d:=\frac{\e_1^2\e_2^2}{\widetilde{C}}$.
\end{proof}

\begin{lemma}\label{L42}
Let $V\subset \R^m$ be open, $M$ a submanifold of $V$ and $g$ a time-oriented Lorentzian metric on $V$ such that
the restriction of $g$ to $M$ is Lorentzian as well. Further let $\e>0$,
$\mathcal{N}$ be a smooth $n$-manifold and $H\colon \mathcal{N}\times S^1\to V$ a continuous map 
such that $H|_{\{y\}\times S^1}$ is an $\e$-timelike loop for all $y\in \mathcal{N}$, $H$ is 
smooth on a neighborhood of $\mathcal{N}\times\{0\}$ and the map $H|_{\mathcal{N}\times\{0\}}$ is a submersion.
Then for every $x\in \mathcal{N}$ there exists a neighborhood $U_x$ of $x$ such that for all 
$\delta\in (0,\e)$ there exists a smooth map $\widetilde{H}_{x,\delta}\colon U_x\times S^1\to V$ with 
$\widetilde{H}_{x,\delta}|_{\{y\}\times S^1}$ is $(\e-\delta)$-timelike for all $y\in U_x$,
$|\widetilde{H}_{x,\delta}(y,t)-H(y,t)|\le \delta$ for all $(y,t)\in U_x\times S^1$ and
$|d\widetilde{H}(y,0)-dH(y,0)|\le\d$ for all $y\in U_x$.
\end{lemma}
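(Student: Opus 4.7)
The plan is to construct $\widetilde{H}_{x,\delta}$ by classical convolution-smoothing in local coordinates, exploiting that $V\subset\R^m$ provides an ambient linear structure. Choose a coordinate chart on $\mathcal{N}$ identifying a neighborhood of $x$ with an open set in $\R^n$, and let $\rho_\eta$ be a standard smooth mollifier of radius $\eta>0$ on $\R^n\times\R$, transferred to $\R^n\times S^1$ via $\R\to S^1$. On a small neighborhood $U_x$ of $x$ define the $\R^m$-valued function
$$\widetilde{H}_\eta(y,t):=\int H(y-y',\,t-t')\,\rho_\eta(y',t')\,dy'\,dt'.$$
By uniform continuity of $H$ on a compact neighborhood of $\{x\}\times S^1$, $\widetilde{H}_\eta\to H$ uniformly as $\eta\to 0$, so for $\eta$ small the image lies in $V$ and $|\widetilde{H}_\eta-H|\le\delta$.

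The main step is verifying that each loop $t\mapsto\widetilde{H}_\eta(y,t)$ is $(\e-\delta)$-timelike. For each fixed $y'$, $t\mapsto H(y',t)$ is Lipschitz (being $\e$-timelike), hence a.e.\ differentiable, and by Fubini and integration by parts
$$\partial_t\widetilde{H}_\eta(y,t)=\int\partial_{t'}H(y-y',\,t-t')\,\rho_\eta(y',t')\,dy'\,dt'.$$
Each integrand value $v':=\partial_{t'}H(y-y',t-t')$ is a future-pointing $\e$-timelike vector at $p':=H(y-y',t-t')$. The crux is a quantitative convexity fact: at a fixed base point $p$, the $\e$-timelike set $\{v\in T_pV:\dist(v,\Light(V,[g])_p)\ge\e|v|_{g_R}\}$ is a convex cone, as the superlevel set of a concave function (concavity of the distance to the boundary of the future causal cone at $p$ minus the convex norm $|\cdot|_{g_R}$). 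Moreover, by uniform continuity of $g$ and of the light-cone distribution on a compact neighborhood of $H(\{x\}\times S^1)$, for every $\delta>0$ there is $\rho>0$ such that $|p-p'|<\rho$ forces the $\e$-timelike cone at $p'$ (under the trivial $\R^m$-identification of tangent spaces) to lie in the $(\e-\delta/2)$-timelike cone at $p$. Shrinking $\eta$ and $U_x$ so that every base point $p'$ entering the integral lies within $\rho$ of $p:=\widetilde{H}_\eta(y,t)$, each $v'$ sits in the $(\e-\delta/2)$-timelike cone at $p$; by convexity the weighted average $\partial_t\widetilde{H}_\eta(y,t)$ does too, and is therefore $(\e-\delta)$-timelike at $\widetilde{H}_\eta(y,t)$.

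Since $H$ is smooth on a neighborhood of $\mathcal{N}\times\{0\}$, standard mollification estimates yield $d\widetilde{H}_\eta\to dH$ uniformly on a smaller neighborhood of $\{t=0\}$ as $\eta\to 0$, giving $|d\widetilde{H}_\eta(y,0)-dH(y,0)|\le\delta$ on $U_x$ for $\eta$ small. Setting $\widetilde{H}_{x,\delta}:=\widetilde{H}_\eta$ for $\eta$ chosen so that all three estimates hold simultaneously finishes the construction. The main obstacle is the convexity/continuity argument for the $\e$-timelike cone just outlined: verifying that these cones are literally convex at each point, and quantifying how much $\e$-margin can be lost when comparing them at nearby base points. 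Everything else is standard mollification.
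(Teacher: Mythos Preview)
Your mollification strategy and the use of convexity of the $\e$-timelike cones match the paper's proof exactly; the construction of $\widetilde{H}_\eta$ is the same, and the paper invokes precisely the convexity of $\Time(\R^m,[g_p])^{\e-\delta}$ together with continuity of the cone field that you isolate.

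There is, however, a genuine gap in your key step. You assert that ``for each fixed $y'$, $t\mapsto H(y',t)$ is Lipschitz (being $\e$-timelike)'' and then write $\partial_t\widetilde H_\eta$ as an average of the a.e.\ derivatives $\partial_{t'}H$. But $\e$-timelikeness is a condition on the \emph{arclength} reparameterization of the loop; the hypothesis on $H$ is only continuity, and a continuous causal curve need not be Lipschitz in its given parameterization (precompose any smooth timelike loop with a non-Lipschitz homeomorphism of $S^1$ fixing $0$). So $\partial_{t'}H$ is not available as an $L^\infty$ function, and your integral identity and the ensuing convex-combination argument are unjustified as written.

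The paper sidesteps this by never differentiating $H$. Instead it works with \emph{secant vectors}: using the fundamental theorem of calculus along the arclength parameterization (where the tangent is $\e$-timelike a.e.), one gets that $H(z,t)-H(z,s)\in\Time(\R^m,[g_p])^{\e-\delta}\cup\{0\}$ for all $z$ in a compact neighborhood and all nearby $s,t$ and base points $p$. A change of variables then shows that the secant $\widetilde H_\eta(y,t)-\widetilde H_\eta(y,s)$ is a $\rho_\eta$-average of secants $H(z,t+\tau)-H(z,s+\tau)$, hence lies in the same convex cone. Only \emph{after} this, with $\widetilde H_\eta$ already smooth, does one let $t\to s$ to conclude $\partial_t\widetilde H_\eta\in\Time^{\e-\delta}\cup\{0\}$. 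This secant device is exactly what your argument is missing; once you insert it, everything else you wrote (including the convexity argument and the $C^1$-approximation near $t=0$) goes through.
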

We will denote with $B^n_r\subset \R^n$ the open ball of radius $r>0$ and center $0\in \R^n$.
\begin{proof}
The statement is local, therefore we can assume $\mathcal{N}\cong \R^n$. Throughout the proof we will identify 
the tangent spaces $TV_p$ with $\R^m$. Both $\R^n$ and $V$ are equipped with the standard scalar product as 
Riemannian metric. For $p\in V$ we will denote with $\Time(\R^m,[g_p])$ the positively 
oriented timelike vectors in $(\R^m,g_p)\cong (TV_p,g_p)$. $\Time(\R^m,[g_p])^\e$ is defined in the obvious way.
W.l.o.g. we can assume that the loops $H|_{\{x\}\times S^1}$ are future pointing for all $x\in \mathcal{N}$.
Note that by assumption $\mathcal{N}$ is connected.

Choose, for given $x\in \R^n$ and $\delta >0$, a real number $0<\eta<\delta$ and a compact neighborhood 
$K\subset \R^n$ of $x$ such that 
\begin{equation}\label{E110}
H(z,t)-H(z,s)\in \Time(\R^m,[g_p])^{\e-\delta}\cup \{0\},
\end{equation}
for all $z\in K$, $s,t\in S^1$ and $p\in V$ such that $|p-H(z,s)|, |s-t|\le\eta$ and 
$H|_{\{z\}\times [s,t]}$ is future pointing. Note that under these assumptions $H(z,t)-H(z,s)=0$ if and only if 
$s=t$, since $H|_{\{z\}\times S^1}$ is causal. This choice is possible since the loops $H|_{\{z\}\times S^1}$
are $\e$-timelike and we can apply the fundamental theorem of calculus to any arclength parameterization of 
$H|_{\{z\}\times S^1}$. 

Choose $\frac{1}{2}>\kappa'>0$ such that $|H(y,s)-H(z,t)|<\frac{\eta}{2}$
for all $(y,s),(z,t)\in K\times S^1$ with $|y-z|,|t-s|<\kappa'$. Further choose 
smooth functions $\phi\colon \R^n\to [0,\infty)$ and $\widetilde{\theta}\colon \R\to [0,\infty)$ with 
$\supp\phi \subset B_1^n$, $\supp \widetilde{\theta}\subset B^1_1$ and $\int_{\R^n}\phi=
\int_\R \widetilde{\theta}=1$. 
For $0<\kappa<\kappa'$ set $\phi^\kappa(x):=\kappa^{-n}\phi(\kappa^{-1}x)$ and $\widetilde{\theta}^\kappa(t):=
\kappa^{-1}\widetilde{\theta}(\kappa^{-1}t)$. Define functions $\overline{\phi}^\kappa\colon\R^n\times \R^n\to [0,\infty)$, 
$(y',y)\mapsto \phi^\kappa(y'-y)$ and $\overline{\theta}^\kappa\colon \R\times \R\to [0,\infty)$, 
$(t',t)\mapsto \widetilde{\theta}^\kappa(t'-t)$. Since we have $\supp\widetilde{\theta}^\kappa \subset 
(-\frac{1}{2},\frac{1}{2})$ (recall $\kappa<\frac{1}{2}$), the function 
$\theta^\kappa\colon S^1\times S^1\to [0,\infty)$, $(t',t)\mapsto \overline{\theta}^\kappa 
(\overline{t}'-\overline{t})$ is well defined, where $\overline{t}$ and $\overline{t}'$ are lifts of $t$ resp. $t'$ with 
$|\overline{t}'-\overline{t}|<1$. Define for $0<\kappa<\kappa'$
\begin{align*}
\widetilde{H}_{x,\kappa}\colon K\times S^1\to V,\;
(y,t)\mapsto \int_{\R^n\times S^1}H(y',t')\overline{\phi}^\kappa(y',y)
\theta^\kappa(t',t)dy'dt'.
\end{align*}

Our goal is to show that the loops $\widetilde{H}_{x,\kappa}|_{\{y\}\times S^1}$ are $(\e-\delta)$-timelike loops for all 
$y\in K$ and $\kappa$ sufficiently small. We have 
$$|\widetilde{H}_{x,\kappa}(y,t)-H(y,t)|\le \int_{\R^n\times S^1}\overline{\phi}^\kappa(y',y) \theta^\kappa(t',t)
|H(y',t')-H(y,t)|dy'dt'\le \frac{\eta}{2}$$
for all $(y,t)\in K\times S^1$ by our assumption above. Recall that, by definition of $\theta$, we have 
$\theta^\kappa(t+\tau,t)=\widetilde{\theta}^\kappa(\overline{\tau})$ for all $t,\tau\in S^1$ and 
$\kappa<\kappa'$, where $\overline{\tau}$ is the unique lift of $\tau$ to $(-\frac{1}{2},\frac{1}{2}]$. Then we have 
\begin{equation}\label{E111}
\begin{split}
&\widetilde{H}_{x,\kappa}(y,t)-\widetilde{H}_{x,\kappa}(y,s)\\
&=\int_{\R^n}\overline{\phi}^\kappa(y',y)\left[\int_{S^1} 
H(y',t')\theta^\kappa(t',t)dt'-\int_{S^1}H(y',s')\theta^\kappa(s',s)ds'\right]dy'\\
&=\int_{\R^n}\overline{\phi}^\kappa(z,y)\int_{S^1} [H(z,t+\tau)-H(z,s+\tau)]
\widetilde{\theta}^\kappa(\overline{\tau})d\tau dz
\end{split}
\end{equation}
for all $s,t\in S^1$ and $y\in K$.

Recall that we have $|H(z,s+\tau)-H(y,s)|<\eta/2$ if $|z-y|$ and $|\tau|<\kappa$. 
Consequently we have $H(z,s+\tau)\in B_{\eta}(\widetilde{H}_{x,\kappa}(y,s))$ and we get
$$H(z,t+\tau)-H(z,s+\tau)\in \Time(\R^m,[g_{\widetilde{H}_{x,\kappa}(y,s)}])^{\e-\delta}\cup\{0\}$$ 
by (\ref{E110}), for all $z\in K$ and $t$ such that $|s-t|\le \eta$ and $H|_{\{z\}\times [s+\tau,t+\tau]}$ is 
future pointing. Using (\ref{E111}) and the fact that $\Time(\R^m,[g_{\widetilde{H}_{x,\kappa}(y,s)}])^{\e-\delta}$ is a convex 
cone, we obtain
$$\widetilde{H}_{x,\kappa}(y,t)-\widetilde{H}_{x,\kappa}(y,s)\in 
\Time(\R^m,[g_{\widetilde{H}_{x,\kappa}(y,s)}])^{\e-\delta}\cup\{0\}.$$
Since $\widetilde{H}_{x,\kappa}$ is smooth and $\Time(V,[g])^{\e-\delta}_{\widetilde{H}_{x,\kappa}(y,s)}$ is closed,
we get 
$$\partial_t \widetilde{H}_{x,\kappa}(y,s)\in \Time(V,[g])^{\e-\delta}\cup\{0\}$$
for all $(y,s)\in K\times S^1$. It is now easy to see that $\partial_t\widetilde{H}_{x,\d}$ is timelike on a sufficiently 
small neighborhood $U_x$ of $x$. 

The only thing left to note is that this approximation procedure applies to any $C^r$-topology, i.e.
the differentials of $H$ at points $(y,t)$ are approximated by the differentials of $\widetilde{H}$ as well. 
This completes the proof.
\end{proof}

At this point we fix a complete Riemannian metric $\mathcal{G}_R$ on $\mathcal{M}$ once and for all.
The following proof is closely oriented on the smoothing technique presented in \cite{hirsch}.

\begin{proof}[Proof of proposition \ref{P5a}]
Let $H_0\colon \mathcal{M}\times S^1\to M$ be a uniform family.
We will reduce the claim to the case that $M$ is a submanifold of some $\R^{m'}$. For this choose an
embedding $F\colon M\to \R^{m'}$ for some $m'\ge 2m$. Consider $\R^{m'}$ to be equipped with the standard Riemannian
metric $\langle.,.\rangle$. Further consider the normal bundle $\pi_N\colon N\to F(M)$ of $F(M)$ and the 
exponential map $\exp^\perp$ restricted to $N$. i.e. $\exp^\perp \colon N\to \R^{m'}$, $v\mapsto \pi_N(v)+v$. 
Choose a smooth function $\e\colon M\to (0,\infty)$ and a neighborhood $V_N$ of the zero section in $N$ such that 
$\exp^\perp|_{V_N}\colon V_N\to \cup_{p\in M}B_{\e(p)}(F(p))=:V$ is a diffeomorphism. Next define on $V$ the Lorentzian 
metric $g':=(\exp^\perp)_\ast(F_\ast g +\langle .,.\rangle|_N)$ together with the time-orientation such that the 
embedding $F\colon (M,g)\to (V,g')$ preserves time-orientation. Note that, by definition, we have 
$$(\pi_N\circ(\exp^\perp)^{-1})_\ast\colon \Time(V,[g'])\to \Time(F(M),[F_\ast g]),$$
and therefore  timelike curves in $(V,g')$ are mapped to timelike curves by $\pi_N\circ(\exp^\perp)^{-1}$. Furthermore 
note that $\pi_N$ is $1$-Lipschitz relative to the Riemannian metrics 
$g'_R:=(\exp^\perp)_\ast(F_\ast g_R +\langle.,.\rangle|_N)$ and $g_R$. Consequently any smooth map 
$H'\colon \mathcal{M}\times S^1\to V$, such that the loops $H'|_{\{x\}\times S^1}$ are timelike, projects to a 
smooth map $H''\colon\mathcal{M}\times S^1\to F(M)$ such that the loops $H''|_{\{x\}\times S^1}$ are timelike. 
We can further choose $H'$ such that $H''$ is still a surjective submersion onto $F(M)$. 
Thus $H''$ is a smooth uniform family on $F(M)$ and consequently induces such a family on $M$.

Choose bounded open neighborhoods $Z_x, W_x$ and $U_x$ of $x$ such that $\overline{Z}_x\subset W_x\subset 
\overline{W}_x\subset U_x$ and lemma \ref{L42} applies to $H|_{U_x\times S^1}$.
Choose a locally finite subcovering $\{Z_i\}_{i\in \N}$ $(Z_i:=Z_{x_i}, W_i:=W_{x_i}, U_i:=U_{x_i})$ of $\mathcal{M}$.
We want to define inductively smooth maps $H'_j\colon \mathcal{M}\times S^1\to V$ with 
\begin{enumerate}[(1)$_j$]
\item $H'_j\equiv H'_{j-1}$ on $(\mathcal{M}\setminus W_j)\times S^1$.
\item $H'_j$ is smooth on $\cup_{i=1}^j Z_i\times S^1$ and $H'_j|_{\{x\}\times S^1}$ is a smooth 
timelike loop for all $x\in \cup_{i=1}^j Z_i$.
\item $H'_j$ is smooth on a neighborhood of $\mathcal{M}\times\{0\}$ and the projection of $H'_j|{\mathcal{M}\times\{0\}}$
to $F(M)$ is a surjective submersion onto $F(M)$.
\end{enumerate}
Since $\{Z_i\}$ is a locally finite cover, the sequence $\{H'_j\}$ converges on compact subsets of $\mathcal{M}\times S^1$
to a smooth map $H'\colon \mathcal{M}\times S^1\to V$ such that the loops $H|_{\{x\}\times S^1}$ are smooth and timelike.
Therefore the only thing left to prove is the existence of a sequence $\{H'_j\}$ satisfying (1)$_j$ -(3)$_j$. for all $j$. 

For $j=0$ we have $H'_0\equiv H$ and there is nothing to prove. By lemma \ref{L40} we can assume that every loop
$H_0|_{\{y\}\times S^1}$ is piecewise geodesic.  Suppose now that $j>0$ and we have smooth maps $H'_i$ satisfying 
$(1)_i-(3)_i$ for $0\le i< j$. We can choose $\e_{j-1}>0$ such that every loop 
$H'_{j-1}|_{\{y\}\times S^1}$ is $\e_{j-1}$-timelike for $y\in W_j$. Consider for $e_{j-1}>\delta >0$ approximations
$\widetilde{H}_{j,\delta}\colon W_j\times S^1\to V$ of $H'_{j-1}$ according to lemma \ref{L42}.

We know that the loops $\widetilde{H}_{j,\delta}|_{\{y\}\times S^1}$ are $(\e_{j-1}-\d)$-timelike for all 
$y\in W_j$. Consequently we can choose $\delta_j>0$ such that 
$v+H'_{j-1}|_{\{y\}\times S^1}$ and $v+\widetilde{H}_{j,\delta}|_{\{y\}\times S^1}$ is $\e_{j-1}/2$-timelike 
for all $y\in W_j$ and all $v\in B^{m'}_{\delta_j}$. 
Choose a partition of unity $\{\lambda_1,\lambda_2\}$ subordinate to $\{W_j,\mathcal{M}\setminus 
\overline{Z}_j\}$. Define 
$$H'_j(y,t):=\lambda_1(y)\widetilde{H}_{j,\d_j}(y,t)+\lambda_2(y)H'_{j-1}(y,t).$$
We have $H'_j|_{[\mathcal{M}\setminus W_j]\times S^1}\equiv H'_{j-1}|_{[\mathcal{M}\setminus W_j]\times S^1}$,
$H'_j|_{[\cup_{i=1}^j Z_i]\times S^1}$ is smooth and $H'_j|_{\{x\}\times S^1}$ is a smooth timelike 
loop for all $x\in \cup_{i=1}^j Z_i\times S^1$. Therefore $H'_j$ satisfies (1)$_j$ and (2)$_j$.
By the assumptions on $H'_{j-1}$ we know that $H'_j$ is smooth in a neighborhood of $\mathcal{M}\times\{0\}$. 
For $\d_j$ sufficiently small, we know that the projection of $H'_j$ to $F(M)$ is a surjective submersion. This is a
consequence of the standard approximation arguments in \cite{hirsch}.

This completes the induction and the proof.
\end{proof}

\section{Lorentzian Aubry-Mather Theory and Class A$_1$ Spacetimes}\label{S5.2}

\begin{definition}\label{D30}
A compact spacetime $(M,g)$ is of class A$_1$ if it is uniformly vicious and the Abelian cover is globally hyperbolic.
\end{definition}

Note that in this case the domain $\mathcal{M}$ of the uniform family $H$ is compact.
\begin{prop}
Any class A$_1$ spacetime is class A.
\end{prop}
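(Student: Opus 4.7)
The plan is almost trivial because the definitions of class A and class A$_1$ differ only in the viciousness hypothesis, and the implication from uniform viciousness to viciousness has already been recorded.

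First I would recall the two definitions side by side: class A requires $(M,g)$ to be compact, vicious, with globally hyperbolic Abelian cover, while class A$_1$ (Definition \ref{D30}) requires $(M,g)$ to be compact, uniformly vicious, with globally hyperbolic Abelian cover. The compactness hypothesis and the global hyperbolicity of the Abelian cover appear in both, so the only remaining step is to upgrade ``uniformly vicious'' to ``vicious.''

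Second, I would simply invoke Proposition \ref{P5-}, which states that any uniformly vicious spacetime is vicious. Applied to a class A$_1$ spacetime, this yields the viciousness condition in the definition of class A, and the proof concludes.

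There is no real obstacle; the only thing to verify is that the two definitions of class A used in the paper agree verbatim on the auxiliary hypotheses (compactness, global hyperbolicity of the Abelian cover), which is immediate from the text in Section~\ref{c4}. In particular, no smoothing of the uniform family (Proposition \ref{P5a}) and none of the machinery developed later in Section \ref{S5.2} is needed: the proposition is a bookkeeping remark ensuring that the results of \cite{suh110} on class A spacetimes apply to class A$_1$ spacetimes.
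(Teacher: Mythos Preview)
Your proposal is correct and matches the paper's own proof, which simply reads ``Clear from proposition \ref{P5-}.'' Your additional unpacking of the definitions is accurate but not strictly necessary.
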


\begin{proof}
Clear from proposition \ref{P5-}.
\end{proof}

\begin{prop}\label{P7}
The set of class A$_1$ metrics is open in $Lor(M)$.
\end{prop}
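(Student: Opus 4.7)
The plan is to separate the two conditions defining class A$_1$ (uniform viciousness, and global hyperbolicity of the Abelian cover) and show each is open in $Lor(M)$. The second condition is exactly the class A condition combined with viciousness, whose openness was established in the earlier work the paper builds on (cf.\ the references to \cite{suh110}, \cite{suh103}). So I would quote openness of class A and focus the argument on openness of uniform viciousness.

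For uniform viciousness, let $g$ be class A$_1$ and let $H_0\colon \mathcal{M}\times S^1\to M$ be a uniform family for $g$. By proposition \ref{P5a} I may replace $H_0$ by a smooth uniform family $H$ such that every loop $H|_{\{x\}\times S^1}$ is timelike in the strict sense, with everywhere timelike tangents. Since $M$ is compact, $\mathcal{M}$ is compact too (as remarked immediately after definition \ref{D30}). Consequently the image of the tangent map
\[
K:=\{\partial_t H(x,t)\mid (x,t)\in \mathcal{M}\times S^1\}\subset TM
\]
is a compact subset of $\Time(M,[g])$. The condition ``$v\in \Time(M,[g'])$'' for a fixed $v\in TM$ depends continuously on $g'$, and $\Time(M,[g])$ is open, so compactness of $K$ yields a $C^0$-neighborhood $\mathcal{U}$ of $g$ in $Lor(M)$ such that $K\subset \Time(M,[g'])$ for all $g'\in \mathcal{U}$ (after fixing a continuous choice of time-orientation, which itself is locally constant in $g'$). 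For every such $g'$ the map $H$ is still a uniform family: the domain, the submersion property of $H|_{\mathcal{M}\times \{0\}}$ and the properness of $H$ do not involve the metric, and the loops $H|_{\{x\}\times S^1}$ are timelike hence essentially timelike with respect to $g'$. Therefore $g'$ is uniformly vicious.

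Finally, the set of metrics whose Abelian cover is globally hyperbolic is open in $Lor(M)$ by the previously known openness of class A (viciousness is automatic from uniform viciousness via proposition \ref{P5-}, and openness of global hyperbolicity on the Abelian cover under perturbations that preserve viciousness is established in the references cited in the introduction). Intersecting with the open neighborhood $\mathcal{U}$ constructed above gives an open neighborhood of $g$ in $Lor(M)$ consisting of class A$_1$ metrics.

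The only potentially delicate step is the last one, the openness of the globally hyperbolic Abelian cover condition; the rest is a straightforward transfer of a single fixed smooth uniform family across nearby metrics, which works cleanly precisely because proposition \ref{P5a} lets us force the loops to have strictly timelike tangents instead of merely essentially timelike.
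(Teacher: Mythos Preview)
Your proposal is correct and follows essentially the same approach as the paper: the paper's one-line proof (``The existence of a smooth uniform family is obviously an open condition in $\Lor(M)$'') is precisely your compactness argument for the tangent set $K$, relying implicitly on proposition \ref{P5a} to first pass to a smooth family with strictly timelike tangents. Your version is in fact more complete, since you also address the openness of the global-hyperbolicity condition on the Abelian cover, which the paper's proof silently delegates to the known openness of class A from \cite{suh103}.
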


\begin{proof}
The existence of a smooth uniform family is obviously an open condition in $Lor(M)$.
\end{proof}
From now on we will assume that the given uniform family is smooth. The phenomenon that justifies 
a study of the Mather theory of class A$_1$ spacetimes is the content of the following proposition.

\begin{prop}\label{P6}
Let $(M,g)$ be of class A$_1$. Then no $\alpha\in \partial \T^\ast$ is a support function of $\mathfrak{l}$.
\end{prop}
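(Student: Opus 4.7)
The plan is to argue by contradiction. Suppose that some $\alpha \in \partial\T^\ast$ satisfies $\alpha(h) \geq \mathfrak{l}(h)$ for all $h \in \T$. Since $\alpha$ lies on the boundary of the dual cone, there is a nonzero $h_0 \in \T$ with $\alpha(h_0) = 0$, and combining $\mathfrak{l} \geq 0$ on $\T$ with $\alpha \geq \mathfrak{l}$ forces $\mathfrak{l}(h_0) = 0$, so $h_0 \in \partial \T$. By Proposition~\ref{P5a} I may assume the uniform family $H \colon \mathcal{M} \times S^1 \to M$ is smooth with $H|_{\{x\}\times S^1}$ a timelike loop for every $x$. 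On each component of $\mathcal{M}$ the homology class of these loops is constant; pick one such class $\eta \in H_1(M,\Z)$. Compactness of $\mathcal{M}$ gives a uniform lower bound $\delta > 0$ on the $g$-lengths of these loops, and lifting/concatenating loops in the Abelian cover shows $\mathfrak{l}(\eta) \geq \delta$, so in particular $\eta \in \T^\circ$.

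The goal is to exhibit $h \in \T$ with $\mathfrak{l}(h) > \alpha(h)$. I would take $h(\epsilon) := h_0 + \epsilon \eta \in \T^\circ$ and show that $\mathfrak{l}(h(\epsilon))$ grows faster than linearly in $\epsilon$. Fix $\widetilde p$ in the Abelian cover. For large $n \in \N$ with $N := n\epsilon \in \N$, take an almost-maximising causal curve $\widetilde c$ from $\widetilde p$ to $\widetilde p + n h_0$; its length is $o(n)$ because $\mathfrak{l}(h_0) = 0$. At $N$ roughly equally spaced points on $\widetilde c$, insert lifts of loops from the uniform family, each contributing homology $\eta$ and length $\geq \delta$. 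The resulting broken causal path from $\widetilde p$ to $\widetilde p + n h(\epsilon)$ has length at least $N\delta + o(n) = n\epsilon\delta + o(n)$, which already gives the trivial linear bound $\mathfrak{l}(h(\epsilon)) \geq \epsilon\delta$; this is not enough.

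To upgrade to superlinear growth I would apply the Lorentzian reverse triangle inequality quantitatively. Each inserted loop creates a corner between a near-lightlike segment of $\widetilde c$ (because $\mathfrak{l}(h_0) = 0$) and a uniformly timelike loop (because $\mathfrak{l}(\eta) \geq \delta$); such a corner is strictly sub-optimal, and the maximal time separation between the two endpoints exceeds the broken-path length by a Lorentzian ``Pythagorean'' cross-term whose size is controlled from below by the Lorentzian angle between the two directions. Localising to convex normal neighbourhoods around each corner and summing, one would obtain an estimate of the form
\[
\widetilde d(\widetilde p, \widetilde p + n h(\epsilon)) \;\geq\; c\, n \sqrt{\epsilon}
\]
for some $c > 0$ independent of $n$ and $\epsilon$. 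Dividing by $n$ and sending $n \to \infty$ yields $\mathfrak{l}(h(\epsilon)) \geq c\sqrt{\epsilon}$. Since $\alpha(h(\epsilon)) = \epsilon\,\alpha(\eta)$ is linear in $\epsilon$, this forces $\mathfrak{l}(h(\epsilon)) > \alpha(h(\epsilon))$ for all sufficiently small $\epsilon > 0$, contradicting $\alpha \geq \mathfrak{l}$.

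The main obstacle is the quantitative step: in a general globally hyperbolic spacetime there is no ambient Minkowski form, so the Lorentzian ``Pythagorean'' gain has to be extracted locally at each corner and summed with uniform control over $n$ and $\epsilon$. This is exactly where uniform viciousness enters the argument: it supplies, at every point of $\widetilde c$, a timelike direction (the tangent to the loop) whose Lorentzian angle with the lightlike direction of $\widetilde c$ is bounded below uniformly on $M$ by a constant depending only on the uniform family, which is what allows the cross-term estimate to survive the summation. An alternative, essentially equivalent, route is to pass to Mather measures: a boundary support functional $\alpha$ would concentrate a Mather class on the lightlike set, and mixing in a small weight of the measure coming from the uniform family should produce a strictly better measure --- but the same square-root-versus-linear mismatch is the decisive ingredient either way.
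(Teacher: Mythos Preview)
Your overall strategy matches the paper's: for $h_0\in\partial\T\setminus\{0\}$ one shows that $\mathfrak{l}$ along the segment $h_0+\epsilon\,h_H$ (with $h_H\in\T^\circ$ the homology class of the uniform family) is bounded below by a constant times $\sqrt{\epsilon}$, which beats the linear upper bound $\alpha(h_0+\epsilon\,h_H)=\epsilon\,\alpha(h_H)$ forced by any support functional $\alpha\in\partial\T^\ast$. You also correctly isolate the quantitative step as the only real difficulty and correctly identify that uniform viciousness is what supplies a uniformly timelike direction along the lightlike maximizer.

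The gap is precisely in that step, and your proposed mechanism does not close it. Inserting $N=n\epsilon$ discrete loops and then ``localising to convex normal neighbourhoods around each corner'' yields only a linear gain: a convex normal neighbourhood has bounded $g_R$-diameter, so straightening one corner improves the $g$-length by at most a constant depending on $(M,g,g_R)$, and summing over $N$ corners gives $O(N)=O(n\epsilon)$. The Minkowski-type cross-term $\sqrt{Lb}$ you invoke requires the near-lightlike leg to have length $L\sim 1/\epsilon$, far beyond any convex neighbourhood; and on that scale you cannot appeal to $d$ or $\mathfrak{l}$ without circularity, since $\mathfrak{l}\bigl((1/\epsilon)h_0+h_H\bigr)=(1/\epsilon)\,\mathfrak{l}(h_0+\epsilon h_H)$ is exactly the quantity you are trying to bound. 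The paper replaces the discrete insertions by a \emph{continuous} drift: it lifts the lightlike maximizer $\gamma_n$ to a curve $\eta_n$ in the domain $\mathcal{M}$ of $H$ (using that $H|_{\mathcal{M}\times\{0\}}$ is a surjective submersion), regards it as sitting in $\mathcal{M}\times S^1$ along $\{\phi=0\}$, and flows along $Y_n=X_n+\frac{1}{\sqrt{|t|+1}}\partial_\phi$, where $X_n$ is built so that $(H\circ(\eta_n,\id))_\ast X_n$ is lightlike. The projection $\zeta_n$ to $M$ is then genuinely timelike; the total $\phi$-drift is $\int_0^T t^{-1/2}\,dt\sim\sqrt{T}$, so $\zeta_n$ represents roughly $Th_0+\sqrt{T}\,h_H$, while the cross-term $|g(H_\ast\partial_\phi,H_\ast X_H)|$ gives $|g(\dot\zeta_n,\dot\zeta_n)|\gtrsim t^{-1/2}$ and hence $L^g(\zeta_n)\gtrsim T^{3/4}$. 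Rescaling yields $\mathfrak{l}(h_n)\gtrsim\sqrt{\dist_{\|.\|}(h_n,\pos\{h_0\})}$, which is the inequality you were aiming for. So the missing idea is to use the uniform family as a fibration $\mathcal{M}\times S^1\to M$ and deform the lightlike curve infinitesimally in the fibre direction, rather than splicing in whole loops at isolated points.
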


Before we prove proposition \ref{P6}, we have to introduce some terminology.
Recall that we have chosen a Riemannian metric $\mathcal{G}_R$ on $\mathcal{M}$ (Note that in the compact case 
completeness is not a condition). $\mathcal{G}_R$ naturally induces a Riemannian metric on $T\mathcal{M}$.
The projection $\pi_{T\mathcal{M}}\colon T\mathcal{M}\to \mathcal{M}$ then is $1$-Lipschitz relative to the induced metrics.
Next we will define a bundle map ($\mathcal{Z}$ denotes the zero section of $T\mathcal{M}$)
$$X_H\colon (T\mathcal{M}\setminus\mathcal{Z})\times S^1\to T(\mathcal{M}\times S^1)$$ 
over the identity on $\mathcal{M}\times S^1$, where $(T\mathcal{M}\setminus\mathcal{Z})\times S^1$ carries the obvious 
bundle structure over $\mathcal{M}\times S^1$, as follows: 

Consider for $(v,\phi)\in(T\mathcal{M}\setminus\mathcal{Z})\times S^1$ the quadratic form 
$$b_{(v,\phi)}\colon \R^2\to \R,\;(\lambda,\eta)\mapsto H^\ast g(\lambda v+\eta\partial_\phi,\lambda v+\eta\partial_\phi).$$
The equation $b_{(v,\phi)}(\lambda,\eta)=0$ admits nontrivial solutions for all $(v,\phi)\in (T\mathcal{M}\setminus\mathcal{Z})\times S^1$, since $b_{(v,\phi)}$ is 
either indefinite (if and only $rk(H_\ast|_{span\{v,\partial_\phi\}})=2$) or negative semidefinite (if and only if $rk(H_\ast|_{span\{v,\partial_\phi\}})=1$), but not 
negative definite. Note that $rk(H_\ast|_{span\{v,\partial_\phi\}})\ge1$, since $H_\ast(\partial_\phi)$ is always timelike and therefore 
$g(H_\ast(\partial_\phi),H_\ast(\partial_\phi))<0$. Thus we have $b_{(v,\phi)}(0,\eta)<0$ for all $(v,\phi)$ and $\eta\neq 0$. In the case that 
$rk(H_\ast|_{span\{v,\partial_\phi\}})=2$, the set of solutions consists of two transversal one-dimensional subspaces which depend locally Lipschitz on $(v,\phi)$. 

For every $(v,\phi)\in (T\mathcal{M}\setminus\mathcal{Z})\times S^1$ we define $X_H(v,\phi):=v+\eta\partial_\phi$ as the unique vector such that $\eta$ is maximal 
among all solutions $(1,\eta)$ of $b_{(v,\phi)}(1,\eta)=0$. Then $H_\ast(X_H)$ is future pointing, if $rk(H_\ast|_{span\{v,\partial_\phi\}})=2$ and $0$, if 
$rk(H_\ast|_{span\{v,\partial_\phi\}})=1$. $X_H$ is well defined and continuous for all $(v,\phi)\in (T\mathcal{M}\setminus\mathcal{Z})\times S^1$. Note that $X_H$ 
is locally Lipschitz on the set $\{(v,\phi)\in (T\mathcal{M}\setminus\mathcal{Z})\times S^1|\; rk(H_\ast|_{span\{v,\partial_\phi\}})=2\}$. Since $\mathcal{M}$ is 
compact, we can choose $L<\infty$ and $\e>0$ such that $X_H$ is $L$-Lipschitz on the $\e$-neighborhood of $H_{\ast}^{-1}(\Light(M,[g])\cap T^{1,R}M)
\cap(\ker H_\ast)^\perp$. This is due to the fact that $H_\ast(\partial_\phi)$ is timelike.

Denote with $h_H\in \T^\circ$ the homology class of the curves $H|_{\{p\}\times S^1}$. The fact that $h_H\in \T^\circ$ follows with a simple pertubation argument.

\begin{proof}[Proof of proposition \ref{P6}]
Let $h\in \partial \T\setminus \{0\}$ and $\{\lambda_n\}_{n\in\N}$ be a sequence of positive real numbers diverging to 
$\infty$. Choose with proposition \ref{P1} a sequence of future pointing lightlike maximizers 
$\gamma_n\colon [-T_n,T_n]\to M$ with $|\gamma'_n|\equiv 1$ and 
$$\|\gamma_n(T_n)-\gamma_n(-T_n)-\lambda_n h\|\le \err(g,g_R).$$
Consider a lift $\eta_n\colon [-T_n,T_n]\to \mathcal{M}$ of $\gamma_n$ with $\eta'_n\perp \ker H_\ast$ (Recall 
that $H$ is a surjective submersion). Then there
exists a constant $C_H<\infty$, depending only on $H$, such that $\frac{1}{\|H_\ast\|_\infty}\le |\eta'_n|\le C_H$,
where $\|H_\ast\|_\infty$ denotes the $C^0$-norm of $H_\ast$. Since $\mathcal{M}$ is compact we can assume 
that
$$\frac{1}{2T_n}(\eta'_n)_\sharp(\mathcal{L}^1|_{[-T_n,T_n]})\stackrel{\ast}{\rightharpoonup}\mu\in 
C^0(T\mathcal{M})'.$$ 
By the above bound on $|\eta'_n|$ we have $\mathcal{Z}\cap \supp\mu=\emptyset$. Let $v\in\supp\mu$ and define
$x:=\pi_{T\mathcal{M}}(v)$. By perturbing the map $H$ around $\{x\}\times S^1$, 
we can assume that $rk\{H_\ast((v,0)),H_\ast(\partial_\phi)\}=2$ for all $\phi\in S^1$, i.e. $H_\ast(X_H)_{(v,\phi)}\neq 0$. Choose $\e_0>0$ such 
that $H_\ast(X_H)|_{B_{\e_0}(v)\times S^1}$ is future pointing lightlike and $\delta >0$ such that $\mu(B_{\e_0}(v))\ge 2\delta$. Then we have 
$$\frac{1}{2T_n}(\eta'_n)_\sharp(\mathcal{L}^1|_{[-T_n,T_n]})(B_{\e_0}(v))\ge \delta$$
for sufficiently large $n\in\N$. 

Let $(t,\phi)$ be the canonical coordinates on $[-T_n,T_n]\times S^1$.
By construction $X_H$ induces a vector field $X_n$ on $[-T_n,T_n]\times S^1$ through the condition 
$(\eta_n,\id)_\ast(X_n):=X_H$. Necessary properties of $X_n$ then are 
$$d\phi(X_n)|_{[-T_n,T_n]\times\{0\}}=0\text{ and }dt(X_n)>\e_1$$
for some $\e_1>0$. Next consider $\e_2>0$ such that $(\eta_n,\id)([-T_n,T_n]\times [-\e_2,\e_2])\subset 
B_\e(H_{\ast}^{-1}(\Light(M,[g])\cap T^{1,R}M)\cap(\ker H_\ast)^\perp)$. Note that we can choose $\e_2$ independent of 
$n$, since $\mathcal{M}$ is compact. 
Then $X_n|_{[-T_n,T_n]\times [-\e_2,\e_2]}$ is $L$-Lipschitz, since by construction $\eta'_n\perp \ker H_\ast$ and 
$H_\ast(\eta'_n)=\gamma'_n\in \Light(M,[g])\cap T^{1,R}M$. Next define the vector field
$$Y_n\colon [-T_n,T_n]\times S^1\to T([-T_n,T_n]\times S^1),\;(t,\phi)\mapsto X_n+\frac{1}{\sqrt{|t|+1}}\partial_\phi.$$ 
Note that $(H\circ (\eta_n,\id))_\ast(Y_n)$ is always future pointing timelike in $(M,g)$. Consider a maximal solution $\xi_n\colon [\alpha_n,\omega_n]
\to [-T_n,T_n]\times S^1$ of $\xi'_n=Y_n(\xi_n)$ starting in $(0,0)\in [-T_n,T_n]\times S^1$. With the definition of $Y_n$ we have $-\alpha_n,\omega_n 
\in [\e_1 T_n,T_n]$ and therefore $-\alpha_n,\omega_n\ge \frac{1}{\e_2}$ if $T_n\ge \frac{1}{\e_1\e_2}$. Next consider positive integers $k$ such that 
$\frac{1}{\e_2}\le\tau_k^\omega:=k(k+1)/2\le \omega_n$ and $-\frac{1}{\e_2}\ge\tau_k^\alpha:=-k(k+1)/2\ge \alpha_n$. We have $d\phi(\xi'_n(\tau))
\le\left(L+\sqrt{\frac{2}{\e_1}}\right)\frac{1}{k}$ for $|\tau|\ge \tau_k^\omega=-\tau_k^\alpha$ and $\phi(\xi_n(\tau))\in 
[-1/k,1/k]$ and consequently
$$\int_{\tau^\omega_{k}}^{\tau^\omega_{k+1}}d\phi(\xi'_n)d\tau, 
\int_{\tau^\alpha_{k+1}}^{\tau^\alpha_{k}}d\phi(\xi'_n)d\tau\le 2\left(L+\frac{1}{\sqrt{\e_1}}\right)+1.$$ 
Therefore there exists $C_1<\infty$ such that
\begin{equation}\label{E21-}
\int_{\alpha_n}^{\omega_n}d\phi(\xi'_n)d\tau\le C_1\sqrt{\omega_n-\alpha_n}
\end{equation}
for $n$ sufficiently large. Note that the integral $\int_{-1/\e_2}^{1/\e_2}d\phi(\xi')$ is uniformly bounded. Set $\zeta_n:=H\circ (\eta_n,\id)\circ \xi_n 
\colon [\alpha_n,\omega_n]\to M$. By construction, $\zeta_n$ is a future pointing timelike curve. Next we want to estimate the $g$-length of $\zeta_n$. 
Since $H_\ast(\partial_\phi)$ is future pointing timelike, $H_\ast(X_H)$ is future pointing or vanishing and $\zeta'_n(\tau)=H_\ast(\partial_\phi)+
\frac{1}{\sqrt{|t(\xi_n(\tau))|+1}}H_\ast(X_H)$, we have $(|t(\xi_n(\tau))|\le |\tau|)$
$$L^{g}(\zeta_n)\ge \int_{\alpha_n}^{\omega_n}\frac{\sqrt{2|H^\ast g(\partial_\phi,X_H)|)}}{(|t(\xi_n(\tau))|+1)^{1/4}}
d\tau\ge \int_{\alpha_n}^{\omega_n}\frac{\sqrt{2|H^\ast g(\partial_\phi,X_H)|}}{(|\tau|+1)^{1/4}}d\tau.$$
Recall that $H_\ast(X_H)|_{B_{\e_0}(v)\times S^1}$ is future pointing lightlike, i.e. $H^\ast g(\partial_\phi,X_H)\neq 0$
on $B_{\e_0}(v)\times S^1$.
By decreasing $\e_0$ (and with it $\d$) we can assume that there exists $\e_3>0$ such that $|g(H_\ast\partial_\phi,H_\ast X_H)|(\zeta_n(\tau))\ge \e_3$
whenever $\zeta_n(\tau)\in H(B_{\e_0}(\pi(v))\times S^1)$. The average amount of time that $\zeta_n$ intersects $H(B_{\e_0}(\pi(v))\times S^1)$ is 
bounded from below by $\e_1\delta$. Therefore we obtain
\begin{equation}\label{E21a}
L^{g}(\zeta_n)\ge \frac{\sqrt{\e_3}}{2}\e_1\delta\left(1-\frac{\e_1\delta}{2}\right)^{3/4}(\omega_n-\alpha_n)^{3/4}
\end{equation}
for $\e_1\delta(\omega_n-\alpha_n)\ge 2$. 

Since $h_H\in \T^\circ$, by our assumption on $\gamma_n$ and (\ref{E21-}), we obtain
$$\dist\nolimits_{\|.\|}(\zeta_n(\omega_n)-\zeta_n(\alpha_n),\pos\{h_H,h\})\le \err(g,g_R).$$ 
We extend the curves $\zeta_n$, using proposition \ref{3.2}, by uniformly bounded arcs to future pointing curves $\overline{\zeta}_n$ with $h_n:=
\rho(\overline{\zeta}_n)\in \pos\{h_H,h\}$. Equation (\ref{E21-}) shows that 
$$\dist\nolimits_{\|.\|}(h_n,\pos\{h\})\le \frac{C_1}{\sqrt{\omega_n-\alpha_n}}.$$
By theorem \ref{stab2} (ii) there exists $\lambda >0$ such that $\lim_{n\to\infty} h_n =\lambda h$. Since $\mathfrak{l}(h_n)\ge
\frac{L^g(\overline{\zeta}_n)}{2(\omega_n-\alpha_n)}$, for sufficiently large $n$, we obtain, using (\ref{E21a}),
$$\mathfrak{l}(h_n)\ge \frac{\e_4}{(\omega_n-\alpha_n)^{1/4}}$$
for $n$ sufficiently large and some $\e_4>0$, independent of $n$. But then for sufficiently large $n$ there exists $\e_5>0$ with
\begin{equation}\label{E21}
\mathfrak{l}(h_n)\ge\e_5\sqrt{\dist\nolimits_{\|.\|}(h_n,\pos\{h\})}.
\end{equation}

For any support function $\alpha\in \T^\ast$ of $\mathfrak{l}$ we have $\mathfrak{l}(h)\le \alpha(h)$. If we assume $\alpha\in \partial\T^\ast$, there 
exists $h_\alpha\in\partial\T\setminus \{0\}$ with $\alpha(h_\alpha)=0$. Consequently, we would have
$$\mathfrak{l}(h)\le \|\alpha\|^\ast \dist\nolimits_{\|.\|}(h,\pos\{h_\alpha\})$$
for all $h\in\T$. This contradicts equation (\ref{E21}) for a suitable sequence $\{h_{n,\alpha}\}_{n\in\N}$.
\end{proof}

Next we want to discuss some consequences for the Lorentzian Mather theory of class A$_1$ spacetimes.
\begin{prop}\label{P6a}
Let $(M,g)$ be of class A$_1$. Then for every $\e>0$ there exists $\d(\e)>0$ such that
$$\dist(\supp \mu ,Light(M,[g]))\ge \d(\e)$$
for every maximal invariant measure $\mu$ with $\rho(\mu)\in\T_\e$.
\end{prop}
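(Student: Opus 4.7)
The plan is a proof by contradiction which reduces the claim to Proposition~\ref{P6} via weak-$\ast$ compactness and ergodic decomposition. Suppose the statement fails for some $\e>0$: there exist maximal invariant measures $\mu_n$ with $\rho(\mu_n)\in\T_\e$ and points $v_n\in\supp\mu_n$ with $\dist(v_n,\Light(M,[g]))\to 0$. Since the causal part of $T^{1,R}M$ is compact, after passing to a subsequence $v_n\to v_\infty\in\Light(M,[g])$ and $\mu_n\rightharpoonup\mu_\infty$ weakly. Invariance passes to the limit; the functionals $\LF(\mu)=\int\sqrt{-g(v,v)}\,d\mu$ and the rotation vector $\rho(\mu)$ depend weak-$\ast$-continuously on $\mu$, so $\rho(\mu_\infty)\in\T_\e$ and $\LF(\mu_\infty)=\lim\mathfrak{l}(\rho(\mu_n))$. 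Together with upper semicontinuity of $\mathfrak{l}$ on $\T$, this forces $\mu_\infty$ to be itself maximal for $\rho(\mu_\infty)$.

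The second step is the ergodic decomposition $\mu_\infty=\int\nu\,dm(\nu)$. The concavity of $\mathfrak{l}$ combined with maximality of $\mu_\infty$ forces each component $\nu$ to be itself maximal and to satisfy $\alpha(\rho(\nu))=\mathfrak{l}(\rho(\nu))=\LF(\nu)$ for a common support function $\alpha$ of $\mathfrak{l}$ at $\rho(\mu_\infty)$. By Proposition~\ref{P6}, $\alpha$ lies in the interior $(\T^\ast)^\circ$, i.e.\ it is strictly positive on $\T\setminus\{0\}$. Consequently any ergodic component $\nu$ with $\rho(\nu)\neq 0$ has $\LF(\nu)=\alpha(\rho(\nu))>0$, and in particular $\supp\nu$ must meet $\Time(M,[g])$; ergodic components with $\rho(\nu)=0$ satisfy $\LF(\nu)=0$ and are concentrated on $\Light(M,[g])$.

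The decisive step is to rule out such zero-rotation lightlike ergodic components by the uniform-family perturbation from the proof of Proposition~\ref{P6}. To ensure that a lightlike vector actually appears in the support of an ergodic component of the limit, I would first perform the ergodic decomposition on each $\mu_n$ and select an ergodic $\nu_n$ with $v_n\in\supp\nu_n$; a weak-$\ast$ accumulation point $\nu$ of the $\nu_n$ is then invariant, and after a further ergodic decomposition if necessary yields a zero-rotation ergodic component $\nu$ concentrated on $\Light(M,[g])$ whose support accumulates at $v_\infty$. Along a generic orbit of such $\nu$ one obtains, for every $T>0$, a lightlike curve $\gamma\colon[-T,T]\to\overline M$ in the Abelian cover whose endpoint displacement $\gamma(T)-\gamma(-T)$ has norm $o(T)$ in $H_1(M,\R)$. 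Running the construction of the curves $\zeta_n$ from the proof of Proposition~\ref{P6} verbatim on these $\gamma$ (with the smooth uniform family supplied by Proposition~\ref{P5a}) produces future pointing timelike curves whose rotation classes $h_n\to 0$ in the direction of $\pos\{h_H\}$ satisfy $\mathfrak{l}(h_n)\ge C\sqrt{\dist_{\|.\|}(h_n,\{0\})}$. For large $n$ this violates the linear upper bound $\mathfrak{l}(h_n)\le \alpha(h_n)$ provided by the support function $\alpha\in(\T^\ast)^\circ$ applied at the perturbed class, exactly reproducing the contradiction used at the end of the proof of Proposition~\ref{P6}.

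The hard part is the technical book-keeping ensuring that lightlike support points persist in the weak-$\ast$ limit of ergodic components, since for general sequences $\mu_n\rightharpoonup\mu_\infty$ one only has $\supp\mu_\infty\subset\liminf\supp\mu_n$ in the Kuratowski sense and individual support points may ``evaporate'' in the limit. Working at the level of ergodic measures, whose supports are rigid invariant sets equal to the closures of generic orbits, is what converts the qualitative hypothesis $\dist(\supp\mu_n,\Light(M,[g]))\to 0$ into the genuine appearance of a zero-rotation lightlike ergodic measure, after which Proposition~\ref{P6}'s perturbation does the rest of the work.
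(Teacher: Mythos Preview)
Your approach is substantially more complicated than the paper's and contains a real gap. The paper's proof is only a few lines: after passing to a weak-$\ast$ limit $\mu$ which is maximal with $\rho(\mu)\in\T_\e$ and $\dist(\supp\mu,\Light(M,[g]))=0$, one picks any support function $\alpha$ of $\mathfrak{l}$ at $\rho(\mu)$; Proposition~\ref{P6} gives $\alpha\in(\T^\ast)^\circ$, whence $\mu\in\mathfrak{M}_\alpha$, and then Propositions~\ref{P20a} and~\ref{P20+} (the calibration machinery from the appendix) say that every orbit in $\supp\mu\subset\supp\mathfrak{M}_\alpha$ is calibrated and therefore uniformly bounded away from $\Light(M,[g])$. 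That is the contradiction. No ergodic decomposition is used, and the construction from the proof of Proposition~\ref{P6} is never re-run; the calibration results already encode the needed separation from the light cone.

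Your argument, by contrast, rests on a dichotomy over ergodic components that is both incomplete and aimed at a vacuous target. First, for a component $\nu$ with $\rho(\nu)\neq 0$ you only conclude that $\supp\nu$ \emph{meets} $\Time(M,[g])$, not that it is \emph{contained} in it, so a lightlike vector could sit in $\supp\nu$ without forcing $\rho(\nu)=0$; your case analysis does not cover this. Second, the zero-rotation lightlike ergodic component you hunt for cannot exist: every ergodic component of $\mu_\infty$ lies in $\mathfrak{M}_\alpha$ for the same $\alpha\in(\T^\ast)^\circ$, so Propositions~\ref{P20a} and~\ref{P20+} already force its support into $\Time(M,[g])$, which is incompatible with $\rho(\nu)=0$ (that would give $\LF(\nu)=0$ and hence support on the light cone). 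Your scheme to manufacture such a component by taking ergodic pieces $\nu_n$ of the $\mu_n$ with $v_n\in\supp\nu_n$ and passing to a weak limit inherits exactly the ``evaporation'' problem you yourself flag and does not resolve it. The right move is the one the paper makes: invoke the calibration results directly and bypass the ergodic bookkeeping and the repetition of the uniform-family perturbation altogether.
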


\begin{proof}
Let $\e>0$ be given. Assume that there exists a sequence of maximal measures $\mu_n$ with $\rho(\mu_n)\in\T_\e$ and
$\dist(\supp\mu_n,\Light(M,[g]))\to 0$ for $n\to \infty$. W.l.o.g. we can assume that $\mu_n(T^{1,R}M)=1$ 
for all $n$. Choose a weakly converging subsequence $\mu_{n_k}$ with weak limit $\mu$. Denote with 
$h\in\T_\e\setminus\{0\}$ the rotation vector of $\mu$. Note that $\mu$ is maximal and $\dist(\supp\mu,\Light(M,[g]))=0$.
Consider any support function $\alpha$ of $\mathfrak{l}$ at $h$. By proposition \ref{P6} we have $\alpha\in(\T^\ast)^\circ$.
Since $\mu\in\mathfrak{M}_\alpha$, we know that any $\gamma$ with $\gamma'\subset \supp\mu$ is calibrated by any
calibration re\-presenting $\alpha$ (proposition \ref{P20a}). By proposition \ref{P20-} the set of calibrations representing 
$\alpha$ is nonempty. But then the conclusion $\dist(\supp\mu,\Light(M,[g]))=0$ contradicts proposition \ref{P20+}.
\end{proof}

Recall the following authentic language introduced in from \cite{suh110}. A future pointing maximizer 
$\gamma\colon \R\to M$ is a $\T^\circ$-maximizer if there exist $\lambda_1,\ldots \lambda_{b+1}\ge 0$ and limit 
measures $\mu_1,\ldots ,\mu_{b+1}$ of $\gamma$ such that $\rho(\sum \lambda_i \mu_i)\in \T^\circ$.

\begin{cor}
Let $(M,g)$ be of class A$_1$. Then any limit measure of a $\T^\circ$-maximizer is supported entirely in $\Time(M,[g])$.
\end{cor}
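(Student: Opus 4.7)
The plan is to produce a single cohomological support functional $\alpha\in(\T^\ast)^\circ$ that calibrates the entire curve $\gamma$, and then use the equality set of the corresponding calibration to pin down $\supp\mu$. Fix a $\T^\circ$-maximizer $\gamma$ with witnessing data $\lambda_1,\ldots,\lambda_{b+1}\ge 0$ and limit measures $\mu_1,\ldots,\mu_{b+1}$ satisfying $h:=\rho(\sum_i\lambda_i\mu_i)\in\T^\circ$, and let $\mu$ be an arbitrary limit measure of $\gamma$.

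First I would pick a supporting linear functional $\alpha\in\T^\ast$ of $\mathfrak{l}$ at $h$, which exists since $h\in\T^\circ$ and $\mathfrak{l}$ is positively homogeneous and concave on $\T$. By Proposition~\ref{P6}, $\alpha$ cannot lie in $\partial\T^\ast$, so $\alpha\in(\T^\ast)^\circ$. A short concavity computation then shows that $\alpha$ automatically supports $\mathfrak{l}$ at each $\rho(\mu_i)$ with $\lambda_i>0$: combining $\alpha\ge\mathfrak{l}$ pointwise on $\T$ with the super-additivity of $\mathfrak{l}$ (reverse triangle inequality), the chain
$$\alpha(h)=\sum_i\lambda_i\alpha(\rho(\mu_i))\ge\sum_i\lambda_i\mathfrak{l}(\rho(\mu_i))\ge\mathfrak{l}(h)=\alpha(h)$$
forces equality throughout. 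Hence $\alpha(\rho(\mu_i))=\mathfrak{l}(\rho(\mu_i))$, and each such $\mu_i$ belongs to $\mathfrak{M}_\alpha$.

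Next I would upgrade this partial information to a calibration of all of $\gamma$. Proposition~\ref{P20-} supplies a calibration $\beta$ representing $\alpha$, and the defect between the $\beta$-integral along $\gamma$ and $L^g(\gamma|_{[s,t]})$ is non-negative by the calibration inequality. Since $\gamma$ is a maximizer in the Abelian cover and at least one of its limit measures lies in $\mathfrak{M}_\alpha$, the defect has Ces\`aro average zero along a diverging sequence; the standard Aubry--Mather bootstrap then collapses the defect to zero on every subarc, so $\gamma$ is $\beta$-calibrated throughout. Consequently every weak-$\ast$ limit of the empirical measures of $\gamma$ is supported in the equality set of $\beta$; in particular $\mu\in\mathfrak{M}_\alpha$. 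Proposition~\ref{P20+}, applied with the interior class $\alpha\in(\T^\ast)^\circ$, then yields $\dist(\supp\mu,\Light(M,[g]))>0$ and therefore $\supp\mu\subset\Time(M,[g])$.

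The anticipated main obstacle is the bootstrap in the previous paragraph: promoting ``some of the specified $\mu_i$'s lie in $\mathfrak{M}_\alpha$'' to ``$\gamma$ is $\beta$-calibrated on every subarc''. The leverage is that $h\in\T^\circ$ forces $\alpha\in(\T^\ast)^\circ$, so $\alpha$ is strictly positive on $\T\setminus\{0\}$; any persistent positive defect along $\gamma$ would accumulate and contradict the maximality of $\gamma$ in the Abelian cover.
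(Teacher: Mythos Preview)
Your displayed chain in the second paragraph has the inequality pointing the wrong way. Super-additivity (equivalently, concavity plus positive homogeneity) of $\mathfrak{l}$ gives
\[
\mathfrak{l}(h)=\mathfrak{l}\Bigl(\sum_i\lambda_i\rho(\mu_i)\Bigr)\ \ge\ \sum_i\lambda_i\,\mathfrak{l}(\rho(\mu_i)),
\]
not the reverse, so the chain does not close and you cannot conclude $\alpha(\rho(\mu_i))=\mathfrak{l}(\rho(\mu_i))$ from it. What would repair this step is the statement that $\nu:=\sum_i\lambda_i\mu_i$ is itself \emph{maximal}, i.e.\ $\LF(\nu)=\mathfrak{l}(h)$; then $\sum_i\lambda_i\mathfrak{l}(\rho(\mu_i))=\sum_i\lambda_i\LF(\mu_i)=\LF(\nu)=\mathfrak{l}(h)$ and the chain becomes a string of equalities. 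That maximality, however, is precisely the nontrivial Aubry--Mather input you need, not a consequence of the reverse triangle inequality.

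The bootstrap in the third paragraph is not justified as written, and your closing heuristic does not rescue it. The defect $D(s,t):=\tau(\overline\gamma(t))-\tau(\overline\gamma(s))-\mathfrak{l}^\ast(\alpha)\,d(\overline\gamma(s),\overline\gamma(t))$ is nonnegative and additive, but ``Ces\`aro average zero along one diverging sequence'' is perfectly compatible with $D(s_0,t_0)=c>0$ on a fixed subarc: for $[s_0,t_0]\subset[a_n,b_n]$ one only gets $D(a_n,b_n)\ge c$, which is still $o(b_n-a_n)$. A persistent positive defect on a bounded piece does not ``accumulate'' in any way that contradicts maximality of $\gamma$; maximality is the statement $L^g(\gamma|_{[s,t]})=d(\overline\gamma(s),\overline\gamma(t))$ and says nothing about how $\tau$ compares to $d$ along $\gamma$.

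The paper presents the corollary as an immediate consequence of Proposition~\ref{P6a}. The missing ingredient, taken from \cite{suh110}, is that any convex combination of limit measures of a single maximizer is again maximal. Granting this, for an arbitrary limit measure $\mu$ of $\gamma$ the measure $\nu_\epsilon:=(1-\epsilon)\sum_i\lambda_i\mu_i+\epsilon\mu$ is maximal with $\rho(\nu_\epsilon)\in\T^\circ$ for small $\epsilon>0$, and $\supp\mu\subset\supp\nu_\epsilon$; Proposition~\ref{P6a} then gives $\dist(\supp\mu,\Light(M,[g]))>0$ directly, without any calibration bootstrap. If you want to salvage your calibration route, you should first prove (or cite) that every limit measure of $\gamma$ lies in $\mathfrak{M}_\alpha$, which is equivalent to the maximality statement just mentioned; once that is in hand, Proposition~\ref{P20a} and Proposition~\ref{P20+} finish the job as you indicate.
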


The following result strengthens the statements of proposition \ref{P10} and proposition \ref{P21} for class A spacetimes in the class of class A$_1$ 
spacetimes. 
\begin{prop}\label{P7a}
Let $(M,g)$ be of class A$_1$. Then there exist $\e>0$ and at least $b$-many maximal ergodic measures $\mu_1,\ldots,\mu_b$ of $\Phi$ such that 
$\{\rho(\mu_k)\}$ is a basis of $H_1(M,\R)$ and 
$$\dist(\supp\mu_k,\Light(M,[g]))\ge \e$$
for all $1\le k\le b$.
\end{prop}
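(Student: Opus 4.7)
The plan is to combine propositions \ref{P10} and \ref{P21} (the class A existence of a basis of rotation vectors realized by maximal ergodic measures) with the newly established proposition \ref{P6a} (uniform distance bound in terms of $\dist(\rho(\mu),\partial\T)$). The core of the argument is a convex-combination perturbation that moves the rotation vectors from $\partial\T$ into $\T^\circ$, followed by ergodic decomposition.

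First, propositions \ref{P10} and \ref{P21} supply $b$ maximal ergodic measures $\nu_1,\ldots,\nu_b$ whose rotation vectors form a basis of $H_1(M,\R)$; the $\rho(\nu_k)$ may a priori lie on $\partial\T$. The homology class $h_H\in\T^\circ$ of the loops of the smooth uniform family (recall the remark just before the proof of proposition \ref{P6}) gives rise, via \ref{P10}/\ref{P21} applied to $h_H$, to a maximal ergodic measure $\mu_*$ with $\rho(\mu_*)\in\T^\circ$. For $\lambda\in(0,1)$ set
\[
\mu_k^\lambda := (1-\lambda)\nu_k + \lambda\,\mu_*.
\]
Since the set of maximal invariant measures is a convex face of the set of $\Phi$-invariant measures, each $\mu_k^\lambda$ is a maximal invariant measure with rotation vector $(1-\lambda)\rho(\nu_k)+\lambda\rho(\mu_*)\in\T^\circ$. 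For $\lambda$ small enough these perturbed rotation vectors still form a basis of $H_1(M,\R)$, and there is a common $\e_0>0$ with $\rho(\mu_k^\lambda)\in\T_{\e_0}$ for every $k$. Proposition \ref{P6a} applied at $\e=\e_0$ yields $\dist(\supp\mu_k^\lambda,\Light(M,[g]))\ge \d(\e_0)>0$.

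Second, pass to ergodic components. Every ergodic component of $\mu_k^\lambda$ is itself maximal (face property) and has support inside $\supp\mu_k^\lambda$, hence inherits the bound $\ge \d(\e_0)$. It remains to choose one ergodic component $\mu_k$ from each $\mu_k^\lambda$ so that $\{\rho(\mu_k)\}_{k=1}^b$ is still a basis. Let $P_k$ denote the disintegration of $\mu_k^\lambda$ into ergodic components. Because $\det$ is multilinear in its columns and $\rho$ is linear in the measure,
\[
\int \det\bigl(\rho(\eta_1),\ldots,\rho(\eta_b)\bigr)\, dP_1(\eta_1)\cdots dP_b(\eta_b) = \det\bigl(\rho(\mu_1^\lambda),\ldots,\rho(\mu_b^\lambda)\bigr)\neq 0.
\]
Consequently the set of $(\eta_1,\ldots,\eta_b)$ on which the integrand is non-zero has positive $P_1\otimes\cdots\otimes P_b$-measure, and any selection from this set provides the required $b$-tuple of maximal ergodic measures with rotation vectors forming a basis of $H_1(M,\R)$ and all satisfying $\dist(\supp\mu_k,\Light(M,[g]))\ge\d(\e_0)=:\e$.

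The subtle point is the ergodic selection: one must ensure that the $b$ chosen \emph{ergodic} components jointly witness the full-rank basis condition, since in general a barycentre in $\T^\circ$ need not be approximated componentwise by its extreme decomposition. The multilinearity-of-$\det$ argument above bypasses this difficulty by turning the selection question into the non-vanishing of a single multilinear integral, which is immediate from the fact that the perturbed barycentres already form a basis.
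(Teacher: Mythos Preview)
Your argument contains a genuine gap at the step where you assert that the convex combination $\mu_k^\lambda = (1-\lambda)\nu_k + \lambda\mu_*$ is again a maximal measure. The set of maximal invariant measures is \emph{not} a convex face of the invariant measures: a measure $\mu$ is maximal iff $\LF(\mu) = \mathfrak{l}(\rho(\mu))$, and since $\mathfrak{l}$ is concave but in general strictly so (already for flat tori), a convex combination of two maximal measures satisfies only
\[
\LF(\mu_k^\lambda)=(1-\lambda)\mathfrak{l}(\rho(\nu_k))+\lambda\mathfrak{l}(\rho(\mu_*))\le \mathfrak{l}\bigl((1-\lambda)\rho(\nu_k)+\lambda\rho(\mu_*)\bigr)=\mathfrak{l}(\rho(\mu_k^\lambda)),
\]
with equality precisely when $\mathfrak{l}$ is affine on the segment $[\rho(\nu_k),\rho(\mu_*)]$. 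What \emph{is} a face is each set $\mathfrak{M}_\alpha$ for a fixed support functional $\alpha$; the maximal measures are the union of these faces over all $\alpha$, and that union is not convex. Consequently proposition \ref{P6a} does not apply to $\mu_k^\lambda$, and the remainder of the argument does not get off the ground.

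The fix is easy and preserves the shape of your proof: rather than forming the convex combination, pick directly a basis $h_1,\dots,h_b$ of $H_1(M,\R)$ lying in some $\T_{\e_0}$ and choose genuine maximal measures $\tilde\mu_k$ with $\rho(\tilde\mu_k)=h_k$ (these exist by the variational characterization of $\mathfrak{l}$ recalled before proposition \ref{P10}). Proposition \ref{P6a} then applies to each $\tilde\mu_k$. Ergodic components of $\tilde\mu_k$ are maximal by a Jensen argument for the concave $\mathfrak{l}$ (equivalently, because $\tilde\mu_k\in\mathfrak{M}_{\alpha_k}$ for a support functional $\alpha_k$ at $h_k$, and $\mathfrak{M}_{\alpha_k}$ is a genuine face), and they inherit the support bound since $\supp\eta\subset\supp\tilde\mu_k$. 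Your multilinear determinant selection then goes through unchanged.

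For comparison, the paper proceeds differently: it works with the convex body $\mathcal{K}=\{(h,t): h\in D,\ 0\le t\le\mathfrak{l}(h)\}$ over a cross-section $D=\alpha^{-1}(1)\cap\T$, writes interior graph points as convex combinations of at most $b$ extremal points of $\mathcal{K}$, and uses proposition \ref{P6} to rule out extremal points with $\mathfrak{l}=0$. It then produces ergodic measures at the resulting extremal directions via a Krein--Milman argument inside $\mathfrak{M}^1_g$ and obtains the light-cone separation from proposition \ref{P20+} rather than \ref{P6a}. Your (corrected) route is more streamlined, packaging the geometric input through \ref{P6a} and replacing the extremal-point analysis of $\mathcal{K}$ by ergodic decomposition plus the determinant trick; the paper's approach, on the other hand, yields the extra structural information exploited in corollary \ref{C18}.
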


\begin{proof}
Fix $\alpha\in(\T^\ast)^\circ$ and consider $D:=\alpha^{-1}(1)\cap\T$. Then, by proposition \ref{P6}, 
$\mathfrak{l}|_{D}$ is a concave function and every support function $\beta\colon \alpha^{-1}(1)\to\R$ of 
$\mathfrak{l}|_{D}$ satisfies $\beta^{-1}(0)\cap D=\emptyset$. This follows from the fact that any affine function 
on $\alpha^{-1}(1)$ has a unique linear extension to $H_1(M,\R)$, and the linear extensions of support functions
of $\mathfrak{l}|_{D}$ are support functions of $\mathfrak{l}$. Next consider the compact convex body 
$$\mathcal{K}:=\{(h,t)|\;h\in D,\;0\le t\le \mathfrak{l}(h)\}$$
in $\alpha^{-1}(1)\times\R$.
Recall that any point $(h,t)\in\mathcal{K}$ is the convex combination of at most $b$ extremal points of
$\mathcal{K}$. Thus for every $(h,t)\in\mathcal{K}$ we can choose extremal points $(h_i,t_i)$ and $\lambda_i\in(0,1]$ such that $(h,t)=
\sum_{i=1}^{b}\lambda_i(h_i,t_i)$. In the case that $t=\mathfrak{l}(h)$, we obtain that $\mathfrak{l}|_{\conv\{h_i\}_{1\le i\le b}}$ is affine and 
$t_i=\mathfrak{l}(h_i)$ for all $i$, since $(h,\mathfrak{l}(h))\in \relint(\conv\{h_i\}_{1\le i\le b})$ and $\mathfrak{l}$ is concave.
Choose any support function $\beta$ of $\mathfrak{l}|_D$ at $h\in\relint(D)$. Then we have $\beta\equiv \mathfrak{l}$
on $\conv\{h_i\}_{1\le i\le b}$. If there exists $1\le i_0\le b$ with $\mathfrak{l}(h_{i_0})=0$ ,we obtain 
$\beta(h_{i_0})=0$ and a  contradiction to our observation that $\beta^{-1}(0)\cap D=\emptyset$ for all support functions
$\beta$ of $\mathfrak{l}|_D$. Therefore any point $(h,\mathfrak{l}(h))\in \mathcal{K}$ with $h\in\relint(D)$ is the convex 
combination of extremal points $(h',\mathfrak{l}(h'))$ of $\mathcal{K}$ with $\mathfrak{l}(h')>0$. 

Choose for every $1\le j\le b$ homology classes $h_j\in \relint(D)$ such that $\{h_j\}_{1\le j\le b}$ is a basis of $H_1(M,\R)$
and support functions $\beta_j$ of $\mathfrak{l}|_D$ at $h_j$. Next choose for every $j$ a set of extremal points 
$\{(h_{j,i},\mathfrak{l}(h_{j,i}))\}_{1\le i\le b_j}$ of $\mathcal{K}$ and $\lambda^{j,i}\in (0,1]$ such that 
$$\sum_i \lambda^{j,i}(h_{j,i},\mathfrak{l}(h_{j,i}))=(h_j,\mathfrak(h_j)).$$
We have seen that every $\beta_j$ is a support function of $\mathfrak{l}|_D$ at $h_{j,i}$ for every $1\le i\le b_j$ as well.
Choose a basis $\{h'_{k}\}_{1\le k\le b}\subset \{h_{j,i}\}_{1\le j\le b,1\le i\le b_j}$ of $H_1(M,\R)$. Fix $1\le k\le b$.
Like in the proof of proposition \ref{P10} we can consider $\lambda_k>0$ maximal among all $\lambda>0$ with 
$(\rho(\mu),\LF(\mu))=\lambda(h'_k,\mathfrak{l}(h'_k))$ for some $\mu\in \mathfrak{M}^1_g$. The preimage 
of $\lambda_k(h'_k,\mathfrak{l}(h'_k))$ under the map $\mu\in\mathfrak{M}^1_g\mapsto (\rho(\mu),\LF(\mu))$ 
is compact and convex in $\mathfrak{M}^1_g$. Therefore it contains extremal points by the theorem of Krein-Milman. 
Every extremal point of this subset is an extremal point of $\mathfrak{M}^1_g$. Since all measures in $\{\nu\in\mathfrak{M}^1_g|\;(\rho(\nu),\LF(\nu))
=\lambda_k(h'_k,\mathfrak{l}(h'_k))\}$ are maximal, the extremal points are maximal ergodic measures. 
Choose a maximal ergodic measure $\mu_k$ with $(\rho(\mu),\LF(\mu))=\lambda_k(h'_k,\mathfrak{l}(h'_k))$.
The unique linear extension $\alpha_k\in H^1(M,\R)$ of $\beta_k$ is a support function of $\mathfrak{l}$ and 
therefore we have $\alpha_k\in (\T^\ast)^\circ$. By our choice we have $\mu_k\in \mathfrak{M}_{\alpha_k}$. 
Then with proposition \ref{P20+} we get $\supp\mu_k\subset \Time(M,[g])$, i.e. there exist $\e_k>0$ with 
$\dist(\supp\mu_k,\Light(M,[g]))\ge \e_k$. Setting $\e:=\min\{\e_k\}$, the proposition follows.
\end{proof}

The proof especially shows that for class A$_1$ spacetimes with $\mathfrak{l}=0$ somewhere on $\partial\T\setminus\{0\}$ ($\mathfrak{l}$ can only 
vanish on $\partial\T$), there exist infinitely many ergodic maximal measures $\mu$ with $\supp\mu \subset\Time(M,[g])$. This follows from the 
observation that if $\mathfrak{l}$ vanishes somewhere on $\partial\T\setminus \{0\}$, the number of extremal points of $\mathcal{K}$ cannot be finite. 
Therefore the following corollary generalizes corollary 4.8 in \cite{su} to a subclass of class A$_1$ spacetimes (note that globally conformally flat 
Lorentzian tori are trivially of class A$_1$).

\begin{cor}\label{C18}
Let $(M,g)$ be of class A$_1$ and assume that $\mathfrak{l}$ vanishes somewhere on $\partial\T\setminus \{0\}$. 
Then there exist infinitely many maximal ergodic measures $\mu$ with 
$$\supp\mu\in\Time(M,[g]).$$
\end{cor}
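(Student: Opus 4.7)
The plan is to re-use the framework of Proposition \ref{P7a}: fix $\alpha \in (\T^*)^\circ$, set $D = \alpha^{-1}(1) \cap \T$ and $\mathcal{K} = \{(h,t) \mid h \in D,\ 0 \le t \le \mathfrak{l}(h)\}$, and let $E_+$ denote the set of $h \in D$ for which $(h, \mathfrak{l}(h))$ is an extremal point of $\mathcal{K}$ with $\mathfrak{l}(h) > 0$. Following the parenthetical remark preceding the corollary, the task reduces to showing that $E_+$ is infinite; given this, applying the Krein--Milman construction from the proof of Proposition \ref{P7a} to each $h_n \in E_+$ will produce the infinitely many ergodic maximal measures with timelike support.

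\emph{Main step: $E_+$ is infinite.} Suppose, for contradiction, that $E_+$ is finite. The proof of Proposition \ref{P7a} shows that every $(h, \mathfrak{l}(h))$ with $h \in \relint(D)$ is a convex combination of points in $\{(h', \mathfrak{l}(h')) \mid h' \in E_+\}$; passing to the closure, the upper boundary of $\mathcal{K}$ is determined entirely by this finite set, so $\mathfrak{l}|_D = \min_{i=1}^{N} \beta_i$ for finitely many affine functions $\beta_i$. After discarding any redundant pieces, each region $R_i = \{h \in D \mid \mathfrak{l}(h) = \beta_i(h)\}$ has nonempty relative interior in $D$, and since $\partial D$ is lower-dimensional this interior meets $\relint(D)$. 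At any such point $\beta_i$ is a support function of $\mathfrak{l}|_D$, so by the observation recalled in Proposition \ref{P7a} (via Proposition \ref{P6}) its linear extension lies in $(\T^*)^\circ$; in particular $\beta_i > 0$ on $D$ for every surviving $i$. By the vanishing hypothesis together with the positive $1$-homogeneity of $\mathfrak{l}$, after rescaling one obtains $h^* \in D \cap \partial \T$ with $\mathfrak{l}(h^*) = 0$, forcing $\min_i \beta_i(h^*) = 0$ and contradicting $\beta_i > 0$ on $D$.

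\emph{Extracting the measures.} For each $h_n \in E_+$ choose $\widetilde h_n \in \relint(D)$ close enough to $h_n$ that the Carathéodory decomposition of $(\widetilde h_n, \mathfrak{l}(\widetilde h_n))$ used in Proposition \ref{P7a} contains $(h_n, \mathfrak{l}(h_n))$ among its extremal constituents. A support function $\beta_n$ of $\mathfrak{l}|_D$ at $\widetilde h_n$ is then automatically a support function at $h_n$ as well, and its linear extension $\alpha_n$ lies in $(\T^*)^\circ$ by Proposition \ref{P6}. Following the Krein--Milman argument of Proposition \ref{P7a} verbatim, take $\lambda_n > 0$ maximal with $(\rho(\mu), \LF(\mu)) = \lambda_n (h_n, \mathfrak{l}(h_n))$ attained by some invariant probability measure $\mu$, and pick an extremal point $\mu_n$ of the (nonempty, compact, convex) attaining set; $\mu_n$ is then ergodic and maximal, and since $\mu_n$ is $\alpha_n$-maximizing with $\alpha_n \in (\T^*)^\circ$, the argument closing the proof of Proposition \ref{P6a} (via Proposition \ref{P20+}) yields $\supp \mu_n \subset \Time(M,[g])$. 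Distinct $h_n \ne h_m$ in $D = \alpha^{-1}(1) \cap \T$ are not positive scalar multiples of each other, so $\rho(\mu_n) = \lambda_n h_n$ and $\rho(\mu_m) = \lambda_m h_m$ lie on distinct rays and $\mu_n \ne \mu_m$.

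The hard part is the extremal-point-counting argument in the main step: one must verify carefully that in a finite piecewise-affine representation $\mathfrak{l}|_D = \min_i \beta_i$ with no redundant pieces, each $\beta_i$ is indeed a support function of $\mathfrak{l}|_D$ at some interior point of $D$, so that Proposition \ref{P6} can be invoked to rule out the vanishing of $\beta_i$ on $D$. Once this observation is in place, the rest of the proof is a repeated application of the construction already used in Proposition \ref{P7a}.
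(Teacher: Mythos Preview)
Your argument is essentially the paper's own: the paper supplies only the one-sentence observation preceding the corollary (that $\mathcal{K}$ cannot have finitely many extremal points once $\mathfrak{l}$ vanishes somewhere on $\partial\T\setminus\{0\}$) and leaves the rest implicit, while you spell out both the infinitude of $E_+$ and the measure-extraction via the Krein--Milman device of Proposition~\ref{P7a}. The main step is argued correctly.

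One small point in the extraction step deserves attention. Your claim that one can choose $\widetilde h_n\in\relint(D)$ so that some Carath\'eodory decomposition of $(\widetilde h_n,\mathfrak{l}(\widetilde h_n))$ contains $(h_n,\mathfrak{l}(h_n))$ is not justified, and for $h_n\in\partial D$ it can fail: take $D=[-1,1]$ and $\mathfrak{l}(h)=\sqrt{1-h^2}+1$; then $(1,1)$ is an extremal point of $\mathcal{K}$ with $\mathfrak{l}(1)>0$, but no affine support function of $\mathfrak{l}|_D$ exists at $h=1$, so no support function at a nearby interior point can touch there either. This is not fatal. Your main step already shows that $\mathfrak{l}|_{\relint(D)}$ cannot be finitely piecewise affine; hence the graph of $\mathfrak{l}$ has infinitely many extremal points lying over $\relint(D)$ (otherwise the graph over $\relint(D)$ would sit in the convex hull of finitely many points of $E_+'$ and be polyhedral). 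For those $h_n\in E_+\cap\relint(D)$ the superdifferential is nonempty, so you may take $\widetilde h_n=h_n$ and bypass the Carath\'eodory manoeuvre entirely; the rest of your extraction then goes through verbatim.
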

The set of class A$_1$ spacetimes satisfying the assumptions of the corollary could be rather 
small in the set of all class A$_1$ spacetimes. It is for example possible to approximate (in any $C^k$-topology) 
any flat Lorentzian metric on the $2$-torus by Lorentzian metrics with $\mathfrak{l}|_{\partial\T\setminus\{0\}}>0$. 
In opposition, for Lorentzian $2$-tori the condition $\mathfrak{l}|_{\partial\T\setminus\{0\}}>0$ can be stable under small 
$C^0$-pertubations of the Lorentzian metric.

\section{Lipschitz continuity of the Time Separation}\label{S4}

\begin{prop}\label{T18}
Let $(M,g)$ be of class A$_1$. Then for all $\e>0$ there exist $\delta>0$ and $K<\infty$ such that 
$$\gamma'(t)\in \Time(M,[g])^\delta$$
for all maximizers $\gamma\colon [a,b]\to M$ with $\gamma(b)-\gamma(a)\in \T_\e\setminus B_K(0)$ and all $t\in[a,b]$.
\end{prop}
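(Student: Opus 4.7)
The plan is to argue by contradiction using the calibration, with proposition \ref{P6} as the key input. Suppose no such $(\d,K)$ exist. For some $\e>0$ there are sequences $K_n\to\infty$, future pointing maximizers $\gamma_n\colon[a_n,b_n]\to\overline M$ (parameterized by $g_R$-arclength) with $h_n:=\gamma_n(b_n)-\gamma_n(a_n)\in\T_\e\setminus B_{K_n}(0)$ and points $t_n\in[a_n,b_n]$ with $\dist(\dot\gamma_n(t_n),\Light(M,[g]))\to 0$. Each $\gamma_n$ must be timelike, since $\mathfrak{l}>0$ on $\T^\circ$ forbids a lightlike maximizer with $h_n\in\T_\e$ and $\|h_n\|\to\infty$.

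First I would extract a sublimit curve. After translating so that $t_n=0$ and passing to a subsequence (using that $b_n-a_n\asymp\|h_n\|\to\infty$ for maximizers with rotation in $\T_\e$), one has $\min(-a_n,b_n)\to\infty$. The key observation is that, although $|\dot\gamma_n^{(g)}|_{g_R}$ in the $g$-affine parameterization blows up wherever $\dot\gamma_n$ approaches the light cone, the $g_R$-acceleration of $\gamma_n$ in the $g_R$-arclength parameterization stays uniformly bounded. This follows from the reparametrization identity
$$\ddot\gamma^{(g_R)}_n=\ddot\gamma^{(g)}_n(\tau)\,\tau'_n(s)^2+\dot\gamma^{(g)}_n(\tau)\,\tau''_n(s),\qquad \tau'_n(s)=|\dot\gamma^{(g)}_n|_{g_R}^{-1},$$
combined with the geodesic equation $\ddot\gamma^{(g)}_n=-\Gamma(\dot\gamma^{(g)}_n,\dot\gamma^{(g)}_n)$ and the bound $|d|\dot\gamma^{(g)}_n|_{g_R}/d\tau|\le C|\dot\gamma^{(g)}_n|^2_{g_R}$ on compact $M$, which together yield $|\ddot\gamma_n^{(g_R)}|_{g_R}=O(1)$. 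Arzela-Ascoli (after quotienting by deck transformations) then produces a $C^1_{\text{loc}}$ sublimit $\gamma_\infty\colon\R\to\overline M$ with $\dot\gamma_\infty(0)=w\in\Light(M,[g])$.

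Next I would identify $\gamma_\infty$ as a lightlike geodesic. The same computation shows that the reparametrization function $f_n(s):=\tau''_n(s)/\tau'_n(s)$ appearing in $\ddot\gamma_n^{(g_R)}+\Gamma(\dot\gamma_n^{(g_R)},\dot\gamma_n^{(g_R)})=f_n(s)\dot\gamma_n^{(g_R)}$ is uniformly bounded, so $\gamma_\infty$ is a $g$-pregeodesic. Along any pregeodesic the sign of $g(\dot\gamma,\dot\gamma)$ is preserved, so the lightlike tangent at $s=0$ forces $\gamma_\infty$ to be a lightlike geodesic; in particular $L^g(\gamma_\infty|_{[-T,T]})=0$ for every $T>0$, while global hyperbolicity of $\overline M$ guarantees $\gamma_\infty(T)-\gamma_\infty(-T)\in\T\setminus\{0\}$ for $T>0$.

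Finally the contradiction comes from calibration and proposition \ref{P6}. Each $\gamma_n$ is (approximately) calibrated by a support function $\alpha_n$ of $\mathfrak{l}$ at $h_n\in\T_\e$; proposition \ref{P6} places $\alpha_n\in(\T^\ast)^\circ$, and compactness of the radial projection of $\T_\e$ together with upper semicontinuity of the subdifferential of $\mathfrak{l}$ gives, along a subsequence, $\alpha_n\to\alpha_\infty\in(\T^\ast)^\circ$. Passing the calibration identity
$$L^g(\gamma_n|_{[-T,T]})=\alpha_n(\gamma_n(T)-\gamma_n(-T))+O(1)$$
to the limit yields $\alpha_\infty(\gamma_\infty(T)-\gamma_\infty(-T))=O(1)$ uniformly in $T$. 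But $\alpha_\infty\in(\T^\ast)^\circ$ is strictly positive on $\T\setminus\{0\}$, and $\gamma_\infty(T)-\gamma_\infty(-T)\to\infty$ in $\T$ along a fixed lightlike ray as $T\to\infty$, so $\alpha_\infty(\gamma_\infty(T)-\gamma_\infty(-T))$ grows linearly in $T$, the desired contradiction. The main obstacle is the uniform Lipschitz estimate on $\dot\gamma_n$ of step~1: the tangents may drift to infinity in $T\overline M$ as the $g$-unit normalization degenerates in $g_R$, but the $g_R$-arclength parameterization tames this blow-up, provided one handles the reparametrization formulas carefully and uses the comparison (\ref{E1}) to relate $g$-timelikeness to the $g_R$-distance from the light cone.
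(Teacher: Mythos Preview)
Your strategy is close in spirit to the paper's, but step~3 has a real gap: the ``calibration identity''
\[
L^g(\gamma_n|_{[-T,T]})=\alpha_n(\gamma_n(T)-\gamma_n(-T))+O(1)
\]
with $O(1)$ uniform in $n$ and $T$ is precisely the crux, and you have not justified it. A finite maximizing segment is \emph{not} in general calibrated, even approximately, by a calibration in the class of a support functional at its rotation vector; calibrated orbits form a thin subset of maximizers. What you get for free from theorem~\ref{T17} is $L^g(\gamma_n)=\alpha_n(h_n)+O(1)$ for the \emph{whole} curve. To descend to the middle piece you must show $\alpha_n(h_n^{(1)}+h_n^{(3)})\ge L^g(\gamma_n|_{[a_n,-T]})+L^g(\gamma_n|_{[T,b_n]})-O(1)$, i.e.\ $d\le\mathfrak l+O(1)$ on the complementary pieces. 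Theorem~\ref{T17} gives this only when the homology lies in some $\T_{\e'}$, and there is no reason the individual pieces $h_n^{(1)},h_n^{(3)}$ satisfy this (indeed the limiting middle segment has homology drifting to $\partial\T$).

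The paper handles exactly this point by a closing--up argument: using fact~\ref{F1} one chooses $k_n\in H_1(M,\Z)_\R$ with $x_n+k_n\in J^+(y_n)$ and $\dist(y_n,x_n+k_n)\le\fil(g,g_R)$, so that the two complementary arcs concatenate (after translation by $k_n$) into a single causal curve whose homology $[\gamma_n(-T)+k_n]-\gamma_n(T)$ lies in $\T_{\e/2}$ for $n$ large. Theorem~\ref{T17} then applies to this concatenation and yields the missing lower bound. Once you insert this step your argument goes through and becomes essentially a limit--curve repackaging of the paper's proof; without it the contradiction does not follow. (The paper dispenses with the limit curve altogether and works directly with the sequence, building ``almost support'' functionals $\alpha_n$ on $\mathrm{span}\{v_n,w_n\}$ and showing their Hahn--Banach extensions converge to some $\beta\in\T^\ast$ with $\beta(w)=0$, contradicting proposition~\ref{P6}.)

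A smaller issue: after translating $t_n$ to $0$ you assert $\min(-a_n,b_n)\to\infty$, but $t_n$ could sit at an endpoint. This is repairable (pass to a one--sided ray or reflect), but should be said.
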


We obtain the following immediate corollary.
\begin{cor}\label{T19}
Let $(M,g)$ be of class A$_1$. For every $\e>0$ there \\ exists $K=K(\e)<\infty$ such that for every sequence of 
maximizers $\{\gamma_n\}_{n\in\N}$ with $L^{g_R}(\gamma_n)\ge K$ and $\rho(\gamma_n)\in \T_\e$, any limit curve 
of $\{\gamma_n\}_{n\in\N}$ is timelike.
\end{cor}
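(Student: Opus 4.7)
The plan is to argue by contradiction, mimicking locally the competitor construction that underlies the proof of proposition \ref{P6}. Suppose the conclusion fails for some $\e>0$; then one can extract $\delta_n\searrow 0$, $K_n\nearrow\infty$ and $g_R$-arclength parametrized future-pointing maximizers $\gamma_n\colon[a_n,b_n]\to\overline M$ with $h_n:=\gamma_n(b_n)-\gamma_n(a_n)\in\T_\e\setminus B_{K_n}(0)$ together with $t_n\in[a_n,b_n]$ at which $\dist(\gamma_n'(t_n),\Light(M,[g]))<\delta_n$. Since $\mathfrak{l}(h)\ge c(\e)\|h\|$ on $\T_\e$ and $L^g(\gamma_n)=d(\gamma_n(a_n),\gamma_n(b_n))=\mathfrak{l}(h_n)+O(1)$ by maximality, combined with the pointwise bound $L^g(\gamma_n)\le\mathcal C(b_n-a_n)$, the ratio $\|h_n\|/(b_n-a_n)$ stays bounded below by a positive constant; in particular $b_n-a_n\to\infty$. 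Translating by deck transformations and passing to a subsequence, arrange that $\gamma_n(t_n)$ projects to a point converging to $p\in M$ and that $\gamma_n'(t_n)\to v\in\Light(M,[g])_p$.

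Next, mimic the construction of \ref{P6} locally around $t_n$. Fix a smooth uniform family $H\colon\mathcal M\times S^1\to M$ (proposition \ref{P5a}) and lifts $\eta_n$ of $\gamma_n$ to $\mathcal M$ with $\eta_n'\perp\ker H_\ast$ through preimages of $p$. After perturbing $H$ near $\{\eta_n(t_n)\}\times S^1$ to ensure $\mathrm{rk}\{H_\ast v,H_\ast\partial_\phi\}=2$ (as is done in the proof of \ref{P6}), the bundle map $X_H$ is defined on a neighborhood, $H_\ast X_H$ is future-pointing lightlike there, and $|g(H_\ast\partial_\phi,H_\ast X_H)|\ge\e_3>0$, where $\e_3$ can be made as large as desired by the choice of perturbation. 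Set
\[
Y_n:=X_H+\frac{1}{\sqrt{|t-t_n|+1}}\partial_\phi,
\]
let $\xi_n\colon[\alpha_n,\omega_n]\to\mathcal M\times S^1$ be its maximal integral curve through $(t_n,0)$, and define $\zeta_n:=H\circ\xi_n$ lifted to $\overline M$ with $\zeta_n(0)=\gamma_n(t_n)$. The calculation producing \eqref{E21a} then gives $L^g(\zeta_n)\ge c_1(\omega_n-\alpha_n)^{3/4}$ for a positive $c_1=c_1(\e_1,\e_3)$.

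On the other hand, boundedness of the pregeodesic acceleration in $T^{1,R}M$ gives $\dist(\gamma_n'(t),\Light(M,[g]))\le\delta_n+C|t-t_n|$, and \eqref{E1} produces $L^g(\gamma_n|_{[t_n-T,t_n+T]})\le 2T\sqrt{\tilde C(\delta_n+CT)}$. Choose $T$ so that $c_1T^{3/4}$ strictly exceeds $2T\sqrt{\tilde C(\delta_n+CT)}+D$, where $D$ is the maximal Lorentzian length absorbed when bridging the endpoints of $\zeta_n$ in $\overline M$ to $\gamma_n(t_n-T)$ and $\gamma_n(t_n+T)$ via the uniformly bounded-$g_R$-length causal arcs produced by proposition \ref{3.2}. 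For $n$ large, the concatenation of $\gamma_n|_{[a_n,t_n-T]}$, the first bridging arc, $\zeta_n$, the second bridging arc, and $\gamma_n|_{[t_n+T,b_n]}$ is a future-pointing causal curve from $\gamma_n(a_n)$ to $\gamma_n(b_n)$ of strictly greater Lorentzian length than $\gamma_n$, contradicting the maximality of $\gamma_n$.

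The main obstacle will be the joint bookkeeping on the constants: $T$ has to be chosen small enough that $2T\sqrt{\tilde C(\delta_n+CT)}$ stays dominated by $c_1T^{3/4}$ once $\delta_n\to 0$, yet large enough that $c_1T^{3/4}$ dominates the fixed deficit $D$ from the bridging arcs. The freedom to locally perturb $H$ near $p$ (a construction already used in the proof of \ref{P6}) lets one make $\e_3$, and hence $c_1$, as large as needed, so that the admissible window for $T$ is non-empty and the contradiction goes through.
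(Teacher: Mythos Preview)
The paper gives no separate argument for this corollary; it is recorded as an immediate consequence of Proposition~\ref{T18}, which already produces a uniform $\delta(\e)>0$ with $\gamma'(t)\in\Time(M,[g])^{\delta(\e)}$ for every maximizer whose rotation class lies in $\T_\e\setminus B_{K}(0)$. A limit curve of such a sequence then has tangents in $\overline{\Time(M,[g])^{\delta(\e)}}\subset\Time(M,[g])$ and is therefore timelike. The real content sits in Proposition~\ref{T18}, whose proof proceeds through the stable time separation $\mathfrak{l}$ and a limiting support-function argument that terminates in an appeal to Proposition~\ref{P6}; no competitor to an individual $\gamma_n$ is ever built.

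Your route is genuinely different---you attempt to manufacture, for each $n$, a longer causal curve from $\gamma_n(a_n)$ to $\gamma_n(b_n)$ and contradict maximality directly---but it has two gaps. First, the perturbation of $H$ in the proof of Proposition~\ref{P6} is purely a genericity device ensuring $rk\{H_\ast v,H_\ast\partial_\phi\}=2$; it does not let you make $\e_3=\inf|g(H_\ast\partial_\phi,H_\ast X_H)|$ arbitrarily large. The constant $c_1$ is therefore fixed by the geometry of the uniform family, and you cannot invoke an enlargement of it to force open a window for $T$. Second, the bridging is not as cheap as claimed: by the analogue of \eqref{E21-} the lift of $\zeta_n$ to $\overline M$ accumulates on the order of $\sqrt{T}$ windings of $h_H$, so its endpoints differ from $\gamma_n(t_n\pm T)$ by a deck translate of size $\sim\sqrt{T}\,\|h_H\|$, not by a bounded amount. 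Neither Proposition~\ref{3.2} nor Fact~\ref{F1} produces causal arcs of bounded $g_R$-length absorbing that displacement, so your deficit $D$ scales like $\sqrt{T}$, which kills the inequality $c_1T^{3/4}>2T\sqrt{\widetilde C(\delta_n+CT)}+D$ precisely in the small-$T$ regime where the $T^{3/2}$ term is harmless. (There is also the unaddressed case where $t_n$ is close to an endpoint $a_n$ or $b_n$, so that one of the two outer pieces of the concatenation has no room left.) The paper avoids all of this by arguing at the level of $\mathfrak{l}$ and its support functions rather than with pointwise competitors.
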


\begin{proof}[Proof of Proposition \ref{T18}]
Choose $K(\e)$ such that the assumptions $y-x\in \T_\e$ and $\|y-x\|\ge K(\e)$ imply $y\in I^+(x)$ for all $x,y\in \overline{M}$ (proposition 
\ref{3.2}). The idea is to confirm the existence of $\delta(\e)>0$ such that for all $x,y\in\overline{M}$ with $\|y-x\|\ge K(\e)$ and $y-x\in \T_\e$, any future 
pointing maximizer $\gamma\colon [0,T]\to \overline M$ from $x$ to $y$ satisfies $\gamma'(t)\in\Time(M,[g])^{\delta(\e)}$ for all $t\in [0,T]$. Assume to 
the contrary that there exist a sequence of pairs $(x_n,y_n)\in \overline{M}\times \overline{M}$ with $y_n-x_n\in \T_\e$, maximizers $\gamma_n\colon
[0,T_n]\to \overline{M}$ connecting $x_n$ and $y_n$ and parameter values $t_n\in [0,T_n]$ with $\gamma'_n(t_n)\notin\Time(M,[g])^{1/n}$. The 
sequence $y_n-x_n$ cannot have any accumulation points by the choice of $K(\e)$. If there exist points of accumulation $x, y$, the curves $\gamma_n$ 
will accumulate towards a maximal lightlike limit curve. Then since $\T_\e$ is closed, there exists a lightlike maximizer connecting points $x$ and $y$ 
with $y-x\in\T_\e$. This contradicts the choice of $K(\e)$. Consequently the sequence $\{y_n-x_n\}_{n\in\N}$ must be unbounded.

Since $\Light(M,[g])\cap T^{1,R}M$ is $\Phi$-invariant and $\Phi$ is complete as well as continuous, there exists a sequence $0<a_n\to\infty$ such that 
$L^g(\gamma_n|_{[t_n-a_n,t_n]})\le 1$ and 
$$\|\gamma_n(t_n)-\gamma_n(t_n-a_n)\|\le \frac{\e}{4}\|\gamma_n(T_n)-\gamma_n(0)\|\le\frac{1}{4}\dist\nolimits_{\|.\|}(\gamma_n(T_n)-\gamma_n(0),
\partial\T)$$
for all $n\in \N$. This implies $[\gamma_n(t_n-a_n)-\gamma_n(0)]+[\gamma_n(T_n)-\gamma_n(t_n)]=:v_n\in\T_{\frac{3\e}{4}}$. Fix a lift 
$\overline{\gamma}_n\colon[0,T_n]\to\overline{M}$ of $\gamma_n$ and choose $k_n\in H_1(M,\Z)_\R$ such that 
$\dist(\overline{\gamma}_n(T_n),\overline{\gamma}_n(0)+k_n)\le\fil(g,g_R)$ and $\overline{\gamma}_n(0)+k_n\in J^+(\overline{\gamma}_n(T_n))$ 
(Fact \ref{F1}). Then $[\overline{\gamma}_n(t_n-a_n)+k_n]-\overline{\gamma}_n(t_n)\in\T_{\e/2}$ for sufficiently large $n$. Denote with $L_{\e/2}$ the 
Lipschitz constant of $\mathfrak{l}|_{\T_{\e/2}}$. We obtain
\begin{align*}
\mathfrak{l}(v_n)+\mathfrak{l}&(\gamma_n(t_n)-\gamma_n(t_n-a_n))\le\mathfrak{l}(\gamma_n(T_n)-\gamma_n(0))
\stackrel{\text{\ref{T17}}}{\le} d(\overline{\gamma}_n(0),\overline{\gamma}_n(T_n))+\overline{C}(\e)\\
=\;&d(\overline{\gamma}_n(0),\overline{\gamma}_n(t_n-a_n))+d(\overline{\gamma}_n(t_n-a_n),\overline{\gamma}_n(t_n))
+d(\overline{\gamma}_n(t_n),\overline{\gamma}_n(T_n))+\overline{C}(\e)\\
\le\; & d(\overline{\gamma}_n(t_n),\overline{\gamma}_n(t_n-a_n)+k_n)+d(\overline{\gamma}_n(t_n-a_n),\overline{\gamma}_n(t_n))+\overline{C}(\e)\\
\le\;& \mathfrak{l}([\overline{\gamma}_n(t_n-a_n)+k_n]-\overline{\gamma}_n(t_n))+1+2\overline{C}(\e/2)\\
\le\; &\mathfrak{l}(v_n)+L_{\e/2}(\fil(g,g_R)+\std)+1+2\overline{C}(\e/2).
\end{align*}
Consequently
$$\mathfrak{l}(\gamma_n(t_n)-\gamma_n(t_n-a_n))\le L_{\e/2}(\fil(g,g_R)+\std)+1+2\overline{C}(\e)=:C_1(\e).$$ 
From $a_n\to\infty$ and $\mathfrak{l}|_{\T^\circ}>0$ we obtain for $n$ sufficiently large that $w_n:=\gamma_n(t_n)-\gamma_n(t_n-a_n)\notin\T_{\e/2}$. 
Therefore the homology classes $v_n+w_n$ and $v_n$ are linearly independent and we can define an ``almost support'' function $\alpha_n$ of 
$\mathfrak{l}$ as follows. Set $\alpha_n(v_n):=\mathfrak{l}(v_n)$ and $\alpha_n(v_n+w_n):=\mathfrak{l}(v_n+w_n)$. This defines a unique linear 
function $\alpha_n$ on $span\{v_n,w_n\}$. For any $\lambda\in[0,1]$ we have
\begin{align*}
\mathfrak{l}(v_n+w_n)&\ge \mathfrak{l}(v_n+\lambda w_n)-L_{\e/2}\dist\nolimits_{\|.\|}((1-\lambda)w_n,\T)\\
&\ge \mathfrak{l}(v_n+\lambda w_n)-L_{\e/2}\err(g,g_R)
\end{align*}
by proposition \ref{P1}, and consequently
\begin{align*}
\mathfrak{l}(v_n+\lambda w_n)&\le \mathfrak{l}(v_n)+C_1(\e)+L_{\e/2}\err(g.g_R)=: \mathfrak{l}(v_n)+C_2(\e).
\end{align*}
With the definition of $\alpha_n$ and $\mathfrak{l}(v_n)\le \mathfrak{l}(v_n+w_n)$ we obtain $\alpha_n|_{\conv\{v_n,v_n+w_n\}}\ge 
\mathfrak{l}|_{\conv\{v_n,v_n+w_n\}}-C_2(\e)$ and therefore
\begin{equation}\label{E22}
\alpha_n(h)\ge\mathfrak{l}(h)-\|h\|\frac{C_2(\e)}{\min\{\|v_n+\lambda w_n\||\; \lambda \in [0,1]\}}
\end{equation}
for all $h\in\pos\{v_n,v_n+w_n\}$. Now the concavity of $\mathfrak{l}$ and the definition of $\alpha_n$ imply \eqref{E22} for all $h\in span\{v_n,w_n\}
\cap \T$. Choose, using the Hahn-Banach theorem, an extension $\beta_n\colon H_1(M,\R)\to \R$ of $\alpha_n$ such that 
$$\beta_n\ge \mathfrak{l}-\|.\|\frac{C_2(\e)}{\min\{\|v_n+\lambda w_n\||\; \lambda \in [0,1]\}}.$$
Since $v_n+w_n\in \T_\e$ and $\alpha_n(v_n+w_n)=\mathfrak{l}(v_n+w_n)>0$ uniformly in $n$, we obtain that $\|\beta\|^\ast$ is bounded away from 
$0$ and $\infty$, uniformly in $n$.

Choose converging subsequences $\beta_{n_k}\to\beta\in H^1(M,\R)\setminus \{0\}$, $v_{n_k}/\|v_{n_k}\|\to v\in \T_{\e/2}$ and $w_{n_k}/\|w_{n_k}\|\to 
w\in \T$. Since $\T$ contains no linear subspaces, we have $\min\{\|v_n+\lambda w_n\||\; \lambda \in [0,1]\}\to\infty$ for $n\to\infty$. By continuity of 
$\mathfrak{l}$ on $\T^\circ$ we have $\beta(v)=\mathfrak{l}(v)$ and therefore $\beta\in \T^\ast$. Note that we have
$$\beta_n(v_n+w_n)=\mathfrak{l}(v_n+w_n)\le \mathfrak{l}(v_n)+C_1(\e)=\beta_n(v_n)+C_1(\e)$$
and therefore $\beta_n(w_n)\le C_1(\e)$. Thus we get $\beta(w)=0$ and a contradiction to proposition \ref{P6}.
\end{proof}

With this ``compactness'' result we are able to prove the full Lipschitz continuity of the time separation, 
thus generalizing the coarse-Lipschitz theorem in \cite{suh103}.
\begin{theorem}\label{T18a}
Let $(M,g)$ be of class $A_1$. Then for all $\e>0$ there exist constants $K(\e),L(\e)<\infty$ 
such that $(x,y)\mapsto d(x,y)$ is $L(\e)$-Lipschitz on $\{(x,y)\in \overline{M}\times\overline{M}|\,y-x\in 
\T_\e\setminus B_{K(\e)}(0)\}$.
\end{theorem}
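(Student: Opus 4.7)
The plan is to combine the uniform $\d$-timelike control of maximizers from Proposition \ref{T18} with the endpoint cut-and-paste argument used in \cite{gaho,es} to prove Lipschitz continuity of Busemann functions. Fix $\e>0$. First I would apply Proposition \ref{T18} with parameter $\e/2$ to obtain $\d>0$ and $K_0<\infty$ such that every maximizer whose endpoint difference lies in $\T_{\e/2}\setminus B_{K_0}(0)$ is $\d$-timelike throughout. Then I would choose $K(\e)\ge 2K_0$ large enough that $y-x\in\T_\e\setminus B_{K(\e)}(0)$ together with $\|x-x'\|+\|y-y'\|\le 1$ forces $y'-x'\in\T_{\e/2}\setminus B_{K_0}(0)$. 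This is possible because $\T_\e$ is convex and $\dist(\cdot,\partial\T)$ scales with the norm, so perturbations near points far from the origin remain well inside $\T_{\e/2}$.

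The key ingredient is an endpoint lemma: there exist constants $c,C<\infty$ depending only on $\d$ such that for every $p\in\overline{M}$, every $g_R$-unit future-pointing $v\in\Time(M,[g])^\d_p$, and every $q\in\overline{M}$ with $\|p-q\|\le 1$, the geodesic $\gamma_v(s):=\exp_p(sv)$ satisfies $\gamma_v(s)\in I^+(q)$ and $d(q,\gamma_v(s))\le Cs$ for $s\in[c\|p-q\|,1]$. In a normal chart centered at $p$, the Lorentzian square of $\gamma_v(s)-q$ expands as $s^2 g(v,v)-2s\,g(v,q-p)+O(\|q-p\|^2)$, and since $-g(v,v)\ge c'\d^2$ by \eqref{E1}, this expression becomes strictly negative as soon as $s$ exceeds a fixed multiple of $\|p-q\|$. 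The distance bound then follows from \eqref{E1}, and uniformity in $p$ is inherited from the $H_1(M,\Z)$-invariance of the structures together with compactness of $M$. The time-reversed statement applies at the future endpoint.

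With these tools in hand the Lipschitz estimate is direct. Let $(x,y),(x',y')$ both lie in the set with $\|x-x'\|+\|y-y'\|\le 1$, and let $\gamma\colon[0,T]\to\overline{M}$ be a $g_R$-unit-speed future-pointing maximizer from $x$ to $y$ (proposition \ref{3.2}); by the first step we have $\gamma'\in\Time(M,[g])^\d$ throughout. Set $s_1:=c\|x-x'\|$, $s_2:=c\|y-y'\|$, which satisfy $s_1+s_2<T$ once $K(\e)$ is large. By the endpoint lemma applied at $x$ with $v=\gamma'(0)$ and (time-reversed) at $y$, there are future-directed causal curves from $x'$ to $\gamma(s_1)$ and from $\gamma(T-s_2)$ to $y'$ of $g$-length at most $Cs_1$ and $Cs_2$ respectively. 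Concatenating them with $\gamma|_{[s_1,T-s_2]}$ and applying the reverse triangle inequality,
\[
d(x',y')\ge L^g\bigl(\gamma|_{[s_1,T-s_2]}\bigr)\ge d(x,y)-C'(s_1+s_2)\ge d(x,y)-cC'(\|x-x'\|+\|y-y'\|),
\]
where \eqref{E1} and $\gamma'\in\Time(M,[g])^\d$ bound $L^g$ on the discarded short segments. The reverse inequality is obtained by the symmetric construction starting from a maximizer between $x'$ and $y'$, which is also $\d$-timelike by the choice of $K(\e)$. For pairs at ambient distance greater than $1$ within the set, one chains finitely many such local estimates along a path in the set, which is connected because $\T_\e\setminus B_{K(\e)}(0)$ is.

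The principal obstacle I expect is the uniformity claim in the endpoint lemma over the non-compact cover $\overline{M}$: the constants $c,C$ must be independent of the base point $p$. This is overcome by the equivariance of all Lorentzian and Riemannian data under $H_1(M,\Z)$ together with the compactness of $M$, reducing the issue to a compactness statement on the unit $g_R$-sphere bundle of $\Time(M,[g])^\d$ over $M$. A secondary difficulty is ensuring that perturbed endpoints do not leave the regime where Proposition \ref{T18} applies; this motivates the $\e\mapsto\e/2$ loss and the enlarged $K(\e)$ in the first step.
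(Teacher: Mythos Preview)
Your argument is correct and rests on the same geometric mechanism as the paper's proof: the uniform $\d$-timelike control of maximizers from Proposition \ref{T18} lets you absorb small perturbations of the endpoints by sacrificing a short initial (or terminal) segment of the maximizer, at a cost linear in the perturbation size. The difference is only in packaging. The paper fixes $y$, picks a nearby point $\gamma(t)$ on the maximizer, and uses the smooth local time separation $z\mapsto d_V(z,\gamma(t))+d(\gamma(t),y)$ as a lower support function for $d(\cdot,y)$ at $x$; the $\d$-timelike tangent bounds its gradient, and Eschenburg's support-function lemma from \cite{es} converts this into a Lipschitz bound. You instead build an explicit competitor curve from $x'$ to $y'$ and compare lengths directly, which is a more elementary route that avoids invoking the smoothness of $d_V$ and the auxiliary lemma. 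Your version also handles both endpoints simultaneously, at the price of needing the $\e\mapsto\e/2$ loss so that the symmetric inequality applies to $(x',y')$; the paper avoids this loss by treating $d(\cdot,y)$ and $d(x,\cdot)$ separately. One small point: your final ``chaining'' step for pairs at ambient distance $>1$ requires that the intrinsic path metric of the domain be comparable to the ambient metric, not merely connectedness; this is true here because $\T_\e$ is convex and removing $B_{K(\e)}(0)$ only forces detours of uniformly bounded extra length, but it deserves one sentence. The paper's proof leaves the same globalization implicit.
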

A few comments are in order on why the result is optimal for general class A$_1$ spacetimes. The flat torus 
is an example of a class A$_1$ spacetime for which the Lipschitz continuity of the time separation on the 
Abelian cover can not be extended to $\partial J^+$.

Further the Lorentzian Hedlund examples in \cite{suh110} show that the condition ``$\|y-x\|\ge K(\e)$'' is necessary for the Lipschitz 
continuity. Locally, i.e. for $\|y-x\|$ small, there exist $x,y\in \R^3$ with $d(x,y)=0$ and $y-x\in \T_\e$ for some 
$\e>0$. 

\begin{proof}[Proof of theorem \ref{T18a}]
The proof is almost a word by word transcription of the proof of theorem 3.7 in \cite{gaho}. We use the following 
lemma proved in the appendix of \cite{es}.
\begin{lem}
Let $U$ be an open convex domain in $\R^n$ and $f\colon U\to \R$ a continuous function. Assume that for any 
$q\in U$ there is a smooth lower support function $f_q$ at $q$ such that $|df_q^\sharp(q)|\le L$. Then $f$ is Lipschitz 
with Lipschitz constant $L$. 
\end{lem}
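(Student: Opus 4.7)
The plan is to prove this pointwise-to-global Lipschitz criterion by reducing to the one-dimensional case along straight segments and then applying a standard comparison lemma for continuous functions with one-sided Dini derivative bounds.

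First, fix $p,q\in U$. By convexity of $U$ the segment $\sigma\colon[0,1]\to U$, $\sigma(t):=(1-t)p+tq$, is contained in $U$, and the composition $\phi(t):=f(\sigma(t))$ is continuous on $[0,1]$. For every $t_0\in[0,1]$ the smooth lower support function $f_{\sigma(t_0)}$ at $\sigma(t_0)$ induces a smooth lower support function
$$\phi_{t_0}(t):=f_{\sigma(t_0)}(\sigma(t))$$
for $\phi$ at $t_0$, with
$$\phi_{t_0}'(t_0)=\bigl\langle df_{\sigma(t_0)}^\sharp(\sigma(t_0)),\,q-p\bigr\rangle.$$
By the Cauchy--Schwarz inequality and the hypothesis, $|\phi_{t_0}'(t_0)|\le L|q-p|$.

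Next I translate this into a one-sided Dini bound. Fix $t_0\in[0,1)$. From $\phi\ge\phi_{t_0}$ with equality at $t_0$ and the smoothness of $\phi_{t_0}$,
$$\phi(t)-\phi(t_0)\ge \phi_{t_0}'(t_0)(t-t_0)+o(t-t_0)\qquad(t\to t_0^+),$$
whence
$$\liminf_{t\to t_0^+}\frac{\phi(t)-\phi(t_0)}{t-t_0}\ge \phi_{t_0}'(t_0)\ge -L|q-p|.$$
So the lower-right Dini derivative $D_+\phi$ is bounded below by $-L|q-p|$ on $[0,1)$.

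The third step is the standard comparison lemma: if $\phi\colon[0,1]\to\R$ is continuous and $D_+\phi(t)\ge -M$ for every $t\in[0,1)$, then $\phi(1)-\phi(0)\ge -M$. Briefly, for any $\varepsilon>0$ set $\psi_\varepsilon(t):=\phi(t)+(M+\varepsilon)t$, which then satisfies $D_+\psi_\varepsilon\ge\varepsilon$; if $\psi_\varepsilon(1)<\psi_\varepsilon(0)$, continuity provides $t^*:=\sup\{t\in[0,1]\mid\psi_\varepsilon(s)\ge\psi_\varepsilon(0)\,\forall s\in[0,t]\}<1$ with $\psi_\varepsilon(t^*)=\psi_\varepsilon(0)$, but then $D_+\psi_\varepsilon(t^*)\ge\varepsilon>0$ forces $\psi_\varepsilon(t)>\psi_\varepsilon(t^*)=\psi_\varepsilon(0)$ just to the right of $t^*$, contradicting the definition of $t^*$. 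Letting $\varepsilon\to 0$ finishes the claim. Applied with $M=L|q-p|$ to our $\phi$, this yields
$$f(q)-f(p)=\phi(1)-\phi(0)\ge -L|q-p|.$$
Interchanging the roles of $p$ and $q$ gives $f(p)-f(q)\ge -L|p-q|$, hence $|f(p)-f(q)|\le L|p-q|$.

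The technical core is the Dini-derivative comparison lemma in the third paragraph; the first two steps are essentially bookkeeping (reducing to dimension one and reading off the one-sided derivative bound from the support property). Once the comparison lemma is in hand, the rest is automatic. No regularity of $f$ beyond continuity is used, which is exactly what makes the criterion useful in the present application to the Lorentzian time separation.
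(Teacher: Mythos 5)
Your proof is correct. Note that the paper itself does not reproduce a proof of this lemma --- it is quoted as ``proved in the appendix of \cite{es}'' --- so there is no in-text argument to compare against. Your reduction to one dimension along the segment $[p,q]$ (using the convexity of $U$), followed by extracting a lower bound on the lower-right Dini derivative from the lower support function and invoking the standard Dini-derivative comparison lemma, is exactly the natural argument for this kind of pointwise-to-global Lipschitz criterion, and it matches the approach in the cited source. One aspect worth flagging as good bookkeeping on your part: a lower support function only yields a one-sided ($\liminf$ from the right) derivative bound, which gives $f(q)-f(p)\ge -L|q-p|$; the matching upper bound is obtained by interchanging $p$ and $q$, not from any upper support function, and you handled this correctly.
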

For a given $\e>0$ choose $K(\e)<\infty$, $\d(\e)>0$ as in proposition \ref{T18} and let $x,y\in \overline{M}$ with $y-x\in \T_\e\setminus 
B_{2K(\e)}(0)$. Further choose a convex normal neighborhood $V$ of $x$ such that $y-z\in \T_{\e}\setminus 
B_{K(\e)}(0)$ for all $z\in V$. Next choose a maximizer $\gamma\colon [0,T]\to\overline{M}$ connecting $x$ with $y$ 
and $t>0$ such that $\dist(x,\gamma(t))\ge \inj(\overline{M},\overline{g})/2$. By proposition \ref{P6} there exists 
$\eta=\eta(\e)>0$, independent of $x$ and $y$, such that 
$$-(\exp^{\overline{g}}_{\gamma(t)})^{-1}(z)\in\Time(\overline{M},[\overline{g}])^{\d(\e)}_{\gamma(t)}$$
for all $z\in B_\eta(x)$. Consider on $B_\eta(x)$ the function $f_x(z):=d_V(z,\gamma(t))+d(\gamma(t),y)$, 
where $d_V$ is the local time separation of $(V,\overline{g}|_V)$. Note that $f_x$ is smooth on $B_\eta(x)$ with bounded  
differential by proposition \ref{P6} and 
$$d_V(z,\gamma(t))=\sqrt{|g(\exp^{-1}(z),\exp^{-1}(z))|}.$$
By the reverse triangle inequality, $f_x$ is a lower support function of $d(.,y)$ at $x$. This establishes the assumption 
of the lemma and we obtain that the restricted time separation $d(.,y)$ is Lipschitz at $x$ with Lipschitz constant depending 
only on $\e$. Since the same argument can be applied to $d(x,.)$, we obtain the Lipschitz continuity of the time 
separation $d$ on $\{(x,y)\in \overline{M}\times\overline{M}|\; y-x\in \T_\e\setminus B_{K(\e)}(0)\}$.
\end{proof}


\appendix

\section{Requisites}\label{A1}

In this first appendix we collect very briefly the results on Lorentzian Aubry-Mather theory needed in the text. Reference are \cite{suh110} and 
\cite{suh103}.

\begin{fact}[\cite{suh103}]\label{F1}
Let $M$ be compact and $(M,g)$ a vicious spacetime. Then there exists a constant $\fil(g,g_R)<\infty$ such that any two points $p,q\in M$ can be joined 
by a future pointing timelike curve with $g_R$-arclength less than $\fil(g,g_R)$.
\end{fact}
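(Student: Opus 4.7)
The plan is to split the argument into a qualitative part (any two points can be joined by a future pointing timelike curve) and a quantitative part (a uniform upper bound on the $g_R$-length of a suitable such curve), and to deduce the latter from upper semicontinuity of the chronological ``length function'' on the compact set $M\times M$.

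First I would establish the qualitative statement: for every $p,q\in M$, $q\in I^+(p)$. Viciousness asserts $p\in I^+(p)$ for every $p\in M$, which is equivalent to $I^+(p)\cap I^-(p)\neq\emptyset$ for every $p$. Consider the relation $p\sim q :\iff q\in I^+(p)\cap I^-(p)$. This relation is symmetric (by definition) and transitive (by concatenation of timelike curves), and viciousness forces reflexivity, so it is an equivalence relation on $M$. Each class is of the form $I^+(p)\cap I^-(p)$, which is open because both $I^+(p)$ and $I^-(p)$ are open. Since $M$ is connected and partitioned into open equivalence classes, there is only one class, so $I^+(p)\cap I^-(p)=M$ for every $p\in M$. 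In particular, for every pair $p,q\in M$ there exists a future pointing timelike curve from $p$ to $q$.

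Next I would introduce
$$\ell(p,q):=\inf\bigl\{L^{g_R}(\gamma)\mid\gamma\text{ future pointing timelike curve from }p\text{ to }q\bigr\},$$
which by the previous paragraph is a finite nonnegative real number for every $(p,q)\in M\times M$. The key step is to show that $\ell\colon M\times M\to[0,\infty)$ is upper semicontinuous. Fix $(p,q)\in M\times M$ and $\eta>0$; pick a future pointing timelike curve $\gamma\colon[0,1]\to M$ from $p$ to $q$ with $L^{g_R}(\gamma)\le\ell(p,q)+\eta/2$. For a small parameter $\tau>0$, set $p_1:=\gamma(\tau)$ and $q_1:=\gamma(1-\tau)$. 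Choose convex normal neighborhoods $U_1$ of $p_1$ and $V_1$ of $q_1$ on which the chronological future and past are open in $M$ and have controlled $g_R$-diameter. Because $p\in I^-(p_1)$ and $q\in I^+(q_1)$, by openness of $I^\pm$ there exist neighborhoods $\mathcal U$ of $p$ and $\mathcal V$ of $q$ such that $p_1\in I^+(p')$ and $q_1\in I^-(q')$ for all $(p',q')\in\mathcal U\times\mathcal V$. Shrinking $\mathcal U,\mathcal V$ further, any such $p'$ can be joined to $p_1$ inside $U_1$ by a future pointing timelike segment of $g_R$-length at most $\eta/4$, and similarly for $q_1$ and $q'$. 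Concatenating these two short segments with $\gamma|_{[\tau,1-\tau]}$ yields, for every $(p',q')\in\mathcal U\times\mathcal V$, a future pointing timelike curve from $p'$ to $q'$ of $g_R$-length at most $L^{g_R}(\gamma)+\eta/2\le\ell(p,q)+\eta$. Hence $\ell(p',q')\le\ell(p,q)+\eta$ on $\mathcal U\times\mathcal V$, proving upper semicontinuity.

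Finally, since $\ell$ is upper semicontinuous on the compact set $M\times M$, it is bounded above and attains its supremum. Choose any constant $\fil(g,g_R)$ strictly larger than $\sup_{(p,q)\in M\times M}\ell(p,q)$; by the definition of the infimum, for every $(p,q)\in M\times M$ there exists a future pointing timelike curve from $p$ to $q$ of $g_R$-arclength strictly less than $\fil(g,g_R)$, which is the desired conclusion. The main obstacle is the uniform length control in the upper semicontinuity step; the argument above resolves it by spending only a controlled amount of extra $g_R$-length in convex normal neighborhoods near the endpoints while keeping the bulk of the curve fixed.
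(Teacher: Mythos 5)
The paper states Fact \ref{F1} as a citation from \cite{suh103} and does not give a proof, so there is nothing in this paper to compare against directly; I can only assess your argument on its own terms. Your overall strategy -- deduce total viciousness from viciousness via the open-equivalence-class argument, define $\ell(p,q)$ as the infimal $g_R$-length of a connecting future pointing timelike curve, prove $\ell$ is upper semicontinuous, and conclude by compactness of $M\times M$ -- is sound and is the natural way to prove this kind of uniform length bound.

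Two points deserve tightening. First, in the upper semicontinuity step you pass from the \emph{global} statement $p_1\in I^+(p')$ to the existence of a short future pointing timelike segment from $p'$ to $p_1$ \emph{inside} $U_1$; these are not the same, and in a vicious spacetime no point has a causally convex neighborhood, so the global relation cannot be localized for free. The fix is to work with the local chronology of $U_1$ from the start: choose $U_1$ a convex normal neighborhood of $p_1$ with small $g_R$-diameter, pick $\tau$ small enough that $\gamma|_{[0,\tau]}\subset U_1$, so that $p\in I^-_{U_1}(p_1)$, and then take $\mathcal U\subset I^-_{U_1}(p_1)$ (which is open in $U_1$). For $p'\in\mathcal U$ the unique geodesic in $U_1$ from $p'$ to $p_1$ is future pointing timelike and has $g_R$-length at most $\diam_{g_R}(U_1)$, which is what you wanted; do the same near $q$. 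Second, the curve you produce by concatenation is only piecewise smooth, whereas the paper's convention (cf.\ the remark after the definition of essentially timelike and proposition \ref{P01}) is that a timelike curve is smooth with timelike tangents. This is harmless -- smoothing the two corners costs only an arbitrarily small additional amount of $g_R$-length -- but it should be said if the statement is to be read with the paper's smooth convention. With these two repairs the argument is complete.
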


For a manifold $M$ denote with $\overline{M}$ the Abelian cover, i.e. $\overline{M}= \widetilde{M}/[\pi_1(M),\pi_1(M)]$. 

Let $M$ be a compact manifold and $\{k_1,\ldots ,k_b\}$ a base of  $H_1(M,\R)$ consisting of integer classes. Denote with 
$\{\alpha_1,\ldots,\alpha_b\}$ the dual base and choose representatives $\omega_i\in\alpha_i$. For two points $x,y\in \overline{M}$ define $y-x\in 
H_1(M,\R)$ through $\langle\alpha_i,y-x\rangle=\int_{\overline{\gamma}}\overline{\omega}_i$ where $\overline{\gamma}$ is any Lipschitz curve 
connecting $x$ and $y$, and $\overline{\omega}_i$ is the lift of $\omega_i$ to $\overline{M}$. For a curve $\gamma\colon [a,b]\to M$ we define
$\gamma(b)-\gamma(a)$ via a lift to $\overline{M}$.

Consider a compact spacetime $(M,g)$ and a sequence $\gamma_n\colon [a_n,b_n]\to M$ of future pointing curve such that $L^{g_R}(\gamma_n)\to
\infty$. Define $\T^1$ to be the set of accumulation points of $\left(\frac{\gamma_n(b_n)-\gamma_n(a_n)}{L^{g_R}(\gamma_n)}\right)_n$ in 
$H_1(M,\R)$. Denote with $\T$ the cone over $\T^1$. We call $\T$ the stable timecone.

\begin{prop}[\cite{suh103}]\label{P1}
Let $(M,g)$ be a compact and vicious spacetime. Then $\T$ is the unique cone in $H_1(M,\R)$ such that there exists 
a constant $\err(g,g_R)<\infty$ with $\dist_{\|.\|}(J^+(x)-x,\T)\le \err(g,g_R)$ for all $x\in\overline{M}$, where 
$J^+(x)-x:=\{y-x|\;y\in J^+(x)\}$.
\end{prop}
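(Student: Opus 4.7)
The plan is to split the claim into existence of an $\err(g,g_R)$ for the specific cone $\T$ and uniqueness among (closed) cones satisfying the bound.

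For existence, fix $x,y\in\overline{M}$ with $y\in J^+(x)$ and choose a future pointing causal curve $\gamma\colon[0,T]\to\overline{M}$ from $x$ to $y$. The key idea is to close $\gamma$ up in the compact manifold $M$ via viciousness and iterate by deck transformations. Invoking Fact \ref{F1}, I would pick a future pointing timelike curve $\delta$ in $M$ from $\pi(y)$ to $\pi(x)$ with $L^{g_R}(\delta)\le\fil(g,g_R)$, and lift it to $\tilde\delta\colon y\to y+c$ in $\overline{M}$, whose endpoint is a lift of $\pi(x)$. Then $k:=(y+c)-x\in H_1(M,\Z)$ is the integer homology class of the loop $\pi(\gamma\cdot\tilde\delta)$. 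Concatenating $\gamma\cdot\tilde\delta$ with its deck-translates produces, for every $N\in\N$, a future pointing causal curve from $x$ to $x+Nk$ of $g_R$-length $N(T+L^{g_R}(\delta))$, along which the normalized homology class is the \emph{stationary} vector $k/(T+L^{g_R}(\delta))$. This vector is therefore an accumulation point and so lies in $\T^1$, placing $k\in\T$. Since $y-x=k-c$ and $\|c\|\le C\cdot L^{g_R}(\delta)\le C\fil(g,g_R)$ (using Lipschitz control of $\|.\|$ by $g_R$-length via the dual basis forms $\omega_i$), we conclude
$$\dist\nolimits_{\|.\|}(y-x,\T)\le\|c\|\le C\fil(g,g_R)=:\err(g,g_R).$$

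For uniqueness, let $\T'$ be any closed cone satisfying the bound with some constant $\err'$. The inclusion $\T\subseteq\T'$ follows directly: for $h\in\T^1$ realized as $\lim(y_n-x_n)/L^{g_R}(\gamma_n)$ with $L^{g_R}(\gamma_n)\to\infty$, the bound together with the cone property of $\T'$ yields $\dist((y_n-x_n)/L^{g_R}(\gamma_n),\T')\le\err'/L^{g_R}(\gamma_n)\to 0$, so closedness forces $h\in\T'$. For the reverse inclusion I would use the second half of the Hausdorff bound: any $h\in\T'$ satisfies $\dist(\lambda h,J^+(x_\lambda)-x_\lambda)\le\err'$ for suitable $x_\lambda$ as $\lambda\to\infty$, so the corresponding $v_\lambda\in J^+(x_\lambda)-x_\lambda$ has $v_\lambda/\|v_\lambda\|\to h/\|h\|$ with realizing curves of $g_R$-length at least $c\|v_\lambda\|\to\infty$, placing $h/\|h\|$ in $\T^1$ and thus $h\in\T$.

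The main obstacle I foresee is the existence part, specifically obtaining a \emph{uniformly bounded} error rather than merely an $o(L^{g_R}(\gamma))$ one. A naive argument using only the asymptotic definition of $\T^1$ shows that $(y-x)/L^{g_R}(\gamma)$ is close to $\T^1$ at a rate depending on the curve, and multiplying back by $L^{g_R}(\gamma)$ blows up. The closing-and-iterating trick circumvents this by generating future pointing causal curves of arbitrarily large $g_R$-length with \emph{stationary} normalized homology class, forcing $k/(T+L^{g_R}(\delta))$ into $\T^1$ on the nose and absorbing the entire error into the bounded return curve $\delta$ supplied by Fact \ref{F1}.
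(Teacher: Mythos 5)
The paper does not actually contain a proof of this proposition — it is imported from \cite{suh103} as an appendix requisite — so I can only assess your argument on its own merits. Your existence argument, the close-and-iterate construction, is the right idea and is essentially correct: closing $\gamma$ up with a uniformly short future-pointing return arc from Fact~\ref{F1} produces a future-pointing loop whose class $k$ can be iterated, yielding arbitrarily long causal curves with \emph{constant} normalized homology, hence $k\in\T$ exactly, and the error $\|c\|$ is controlled by $\fil(g,g_R)$. This is a clean way to get a genuinely uniform bound rather than an $o(L^{g_R})$ one, and your $\T\subseteq\T'$ direction of uniqueness, dividing the uniform error by $L^{g_R}(\gamma_n)\to\infty$, is also fine.

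The gap is in the $\T'\subseteq\T$ direction. There you pass from $v_\lambda/\|v_\lambda\|\to h/\|h\|$ to the conclusion $h/\|h\|\in\T^1$, but $\T^1$ is defined as the set of accumulation points of $\bigl(\gamma_n(b_n)-\gamma_n(a_n)\bigr)/L^{g_R}(\gamma_n)$, i.e.\ normalized by the \emph{$g_R$-arclength} of a realizing causal curve, not by the homology norm of the displacement. You only control $L^{g_R}(\gamma_\lambda)$ from below by $c\|v_\lambda\|$; nothing in the hypotheses prevents a realizing causal curve from being much longer than $\|v_\lambda\|$, in which case your sequence accumulates at $0$ rather than at a positive multiple of $h$, and no conclusion about $h\in\T$ follows. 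A cleaner route, which also avoids choosing curves at all, is to use the existence bound you already proved together with the fact that $\T$ is a closed cone: for $h\in\T'$ and any $\lambda>0$, the (two-sided) Hausdorff bounds for $\T'$ and for $\T$ give some $v_\lambda\in J^+(x)-x$ with $\|\lambda h-v_\lambda\|\le\err'$ and $\dist(v_\lambda,\T)\le\err$, hence $\dist(\lambda h,\T)\le\err+\err'$. But $\dist(\lambda h,\T)=\lambda\dist(h,\T)$, so letting $\lambda\to\infty$ forces $\dist(h,\T)=0$, i.e.\ $h\in\T$.

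Finally, note that this fix — and indeed the word ``unique'' in the statement — presupposes that $\dist_{\|.\|}(J^+(x)-x,\T)$ denotes the two-sided (Hausdorff) distance. Your existence argument establishes only the one-sided inclusion $J^+(x)-x\subseteq N_{\err}(\T)$; the converse inclusion $\T\subseteq N_{\err}(J^+(x)-x)$, needed for the Hausdorff bound, is not addressed and is not immediate. (Transporting base points via Fact~\ref{F1} handles the dependence on $x$, but realizing a \emph{given} $h\in\T$ at a \emph{fixed} scale by a point of $J^+(x)-x$ requires control on the intermediate homology classes along the realizing curves, not just on their endpoints.) Without this second inclusion the uniqueness claim, under either interpretation, does not follow from what you have written: with the one-sided interpretation any closed cone containing $\T$ also satisfies the bound, so there is no uniqueness at all.
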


\begin{prop}[\cite{suh103}]\label{3.2}
Let $(M,g)$ be a compact and vicious spacetime. Then for every $R>0$ there exists a constant $0<K=K(R)<\infty$ such that
    $$B_R(q)\subseteq I^+(p)$$
for all $p,q\in \overline{M}$ with $q-p\in\T$ and $\dist_{\|.\|}(q-p,\partial \T)\ge  K$.
\end{prop}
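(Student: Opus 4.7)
The plan is to reduce the ball statement to a pointwise chronological-connectedness claim, then construct the required future timelike curves by combining short ``patches'' from viciousness with long ``direction-controlled'' runs coming from the definition of $\T$.

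\textbf{Reduction.} It suffices to prove the point version: there exists $K'=K'(R)$ such that for all $x,y\in\overline M$ with $y-x\in\T$ and $\dist_{\|\cdot\|}(y-x,\partial\T)\ge K'$, one has $y\in I^+(x)$. Indeed, for $q'\in B_R(q)$ one has $\|q'-q\|\le C_0 R$ by equivalence of the homology norm and $g_R$-distance on a compact fundamental domain for the $H_1(M,\Z)_\R$-deck action, so $\dist(q'-p,\partial\T)\ge \dist(q-p,\partial\T)-C_0 R$, and then $K:=K'(R)+C_0 R$ does the job.

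\textbf{Construction.} For the point version I would use two building blocks: (a) Fact \ref{F1}, which produces future timelike curves in $M$ of $g_R$-length at most $\fil(g,g_R)$ between any two points, and whose lifts to $\overline M$ shift the homology class by a bounded amount; and (b) the definition of $\T$, which for each direction $h\in\T^\circ$ and each $\eta>0$ provides a long future pointing essentially timelike curve in $M$ whose lifted normalized endpoint-difference lies within $\eta$ of $h/\|h\|$. Closing a curve of type (b) with a patch of type (a) yields a future timelike loop at any prescribed basepoint $\bar x\in M$ whose integer class is approximately a large positive multiple of the chosen $h$. Let $\mathcal L(\bar x)\subseteq H_1(M,\Z)_\R\cap \T$ denote the additive submonoid (under concatenation) of integer classes realized in this way. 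Given $y-x$ deep in $\T$, applying (a) to $(\pi(x),\pi(y))$ and lifting at $x$ yields a future timelike curve from $x$ to some $y^\ast\in\overline M$ with $\pi(y^\ast)=\pi(y)$ and $\|y^\ast-x\|$ bounded; in particular $k:=y-y^\ast\in H_1(M,\Z)_\R$ is itself deep in $\T$. If $k\in\mathcal L(\pi(x))$, I prepend the corresponding loop at $x$ (ending at $x+k$) and then translate the short patch by the deck element $k$ (so that it runs from $x+k$ to $y$); the concatenation is a future timelike curve from $x$ to $y$, so $y\in I^+(x)$.

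\textbf{Main obstacle.} The heart of the proof is the density claim
$$\mathcal L(\bar x)\supseteq\{k\in H_1(M,\Z)_\R\cap\T\,|\,\dist_{\|\cdot\|}(k,\partial\T)\ge K_0\}$$
uniformly in $\bar x\in M$, for some $K_0<\infty$. Since $\mathcal L(\bar x)$ is an additive submonoid containing integer classes approximating arbitrarily many directions of $\T^\circ$, a Carath\'eodory/polyhedral argument---writing $k$ as a nonnegative integer combination of a finite generating set whose positive rational cone covers a subcone of $\T^\circ$ containing every sufficiently deep integer class, with any bounded fractional remainder absorbed by a finite auxiliary set of small loop classes provided by viciousness---should yield the density; uniformity in $\bar x$ follows from compactness of $M$ and continuity of the patching constructions. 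Once this density is in hand, the rest of the argument above goes through and produces the required $K(R)$.
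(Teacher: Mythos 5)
The paper does not actually reproduce a proof of this proposition; it is quoted verbatim in the appendix from \cite{suh103} as a requisite. So there is no ``paper proof'' to compare against line by line, and I will assess your sketch on its own merits.

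Your overall architecture is reasonable: reduce the ball statement to a pointwise one by absorbing the radius into the constant, and then try to realize $y\in I^+(x)$ by concatenating bounded timelike patches (from Fact \ref{F1}) with long causal runs pointing in prescribed directions (from the definition of $\T$). The genuine gap is exactly where you flag it, but your proposed fix does not close it. You want
$$\mathcal L(\bar x)\supseteq\{k\in H_1(M,\Z)_\R\cap\T\,|\,\dist_{\|\cdot\|}(k,\partial\T)\ge K_0\},$$
and you sketch a Carath\'eodory-type decomposition $k=\sum \lfloor n_i\rfloor g_i + r$ with bounded integer remainder $r$, to be ``absorbed by a finite auxiliary set of small loop classes.'' That absorption step is not available: the remainder $r$ ranges over a bounded region of $\T\cap H_1(M,\Z)_\R$ that includes classes near $\partial\T$ and the class $0$, while the classes realized by short timelike loops are a fixed, generally sparse subset (for instance $0$ is never realized when the Abelian cover is causal, and nothing forces the small loop classes to generate a finite-index sublattice, let alone one aligned with where the remainders live). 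A submonoid of $\Z^b\cap\T$ generated by finitely many elements whose rays cover $\T^\circ$ does not in general contain every sufficiently deep lattice point, so the density claim cannot rest on the monoid structure alone.

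The natural repair, which your sketch currently lacks, is to stop trying to hit $q$ on the nose by homology bookkeeping and instead exploit the openness of $I^+$ together with iteration. If one finds finitely many integer classes $k_1,\dots,k_N$ whose positive cone contains a neighborhood of every deep direction and, by compactness of $M$, a uniform $\rho>0$ with $B_\rho(p+k_i)\subseteq I^+(p)$ for all $p\in\overline M$, then, since deck transformations act by isometries and $I^+(I^+(p))=I^+(p)$, one gets $B_{n\rho}(p+k_{i_1}+\cdots+k_{i_n})\subseteq I^+(p)$ after $n$ hops. A deep class $q-p$ forces a large number of hops, hence a large ball, and the bounded error between $q$ and the lattice point $p+\sum\lfloor n_i\rfloor k_i$ is then absorbed by the ball rather than by extra loop classes. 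Without this (or some substitute) mechanism your argument stalls precisely at the step you call the ``heart of the proof.'' You would also want to make explicit the perturbation turning the long \emph{causal} runs coming from the definition of $\T$ into \emph{timelike} curves after concatenation with a timelike patch (proposition \ref{P01} handles this), and to state the uniformity of the auxiliary constants in the basepoint explicitly rather than by appeal to ``continuity of the patching constructions,'' since $\mathcal L(\bar x)$ is a discrete object.
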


Recall from \cite{suh103} that a compact spacetime $(M,g)$ is of class A if $(M,g)$ is vicious and the Abelian covering space is globally hyperbolic.

\begin{theorem}[\cite{suh103}]\label{stab2}
Let $(M,g)$ be compact and vicious. Then the following statements are equivalent:
\begin{enumerate}[(i)]
\item $(M,g)$ is of class A.
\item We have $0\notin \T^1$. Especially $\T$ contains no linear subspaces.
\item We have $(\T^\ast)^\circ\neq\emptyset$ and for every $\alpha \in (\T^\ast)^\circ$ 
there exists a smooth $1$-form $\omega$ representing $\alpha$ such that $\ker\omega_p$ is a spacelike hyperplane 
in $(TM_p,g_p)$ for all $p\in M$.
\end{enumerate}
\end{theorem}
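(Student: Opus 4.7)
The strategy is to prove the three equivalences cyclically via (ii) $\Rightarrow$ (iii) $\Rightarrow$ (i) $\Rightarrow$ (ii), viciousness being a standing hypothesis. The implication (ii) $\Rightarrow$ (iii) contains the main technical content. Since $\T$ is a convex cone in the finite dimensional space $H_1(M,\R)$ containing no linear subspaces, its dual $\T^\ast$ has nonempty interior. Fix $\alpha \in (\T^\ast)^\circ$. To produce a closed smooth $1$-form $\omega$ representing $\alpha$ with spacelike kernels I would run a Sullivan/Schwartzman type Hahn-Banach argument: consider the closed convex cone $\mathcal{C}$ in the space of $1$-currents on $M$ generated by weak limits of normalized tangent measures $\tfrac{1}{L^{g_R}(\gamma_n)}(\gamma_n')_\sharp\mathcal{L}^1$ along long future pointing causal curves $\gamma_n$. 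By the definition of $\T$, the homology classes of nonzero currents in $\mathcal{C}$ lie in $\T\setminus\{0\}$, and by (ii) the functional $\alpha$ is strictly positive on this set. Hahn-Banach separation in $1$-currents modulo exact forms, followed by a mollification of the resulting functional, yields a smooth closed $\omega$ in class $\alpha$ with $\omega_p(v)>0$ for every nonzero future pointing causal $v$, so that $\ker\omega_p$ is spacelike at each $p\in M$.

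The implication (iii) $\Rightarrow$ (i) goes by building a time function on $\overline M$. Lifting $\omega$ to $\overline M$ gives $\omega = d\tau$ with $\tau\colon\overline M\to\R$ smooth and strictly increasing along every future pointing causal curve, establishing stable causality. For compact diamonds, compactness of $M$ together with pointwise positivity of $\omega$ on future pointing causal vectors gives a constant $c>0$ with $\omega(v)\ge c\|v\|_{g_R}$ there, so any causal curve inside a slab $\{a\le\tau\le b\}$ has $g_R$-length at most $(b-a)/c$. Since $\tau$ shifts by $\alpha(k)$ under the deck action of $k\in H_1(M,\Z)$ and $\alpha$ is nonzero on $\T\setminus\{0\}$, the slab modulo the deck action is compact, so $J^+(\bar x)\cap J^-(\bar y)$ is contained in a fixed compact subset of $\overline M$; combined with Fact~\ref{F1} and the uniform length bound, this yields compactness of the diamond. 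For (i) $\Rightarrow$ (ii), assume for contradiction that $0\in\T^1$, so there exist future pointing $\gamma_n\colon[a_n,b_n]\to M$ with $L^{g_R}(\gamma_n)\to\infty$ and $(\gamma_n(b_n)-\gamma_n(a_n))/L^{g_R}(\gamma_n)\to 0$. Lift to $\bar\gamma_n$ in $\overline M$; after translating by deck elements the endpoints stay in a compact $K\subset\overline M$, so each $\bar\gamma_n$ lies in the compact diamond $J^+(K)\cap J^-(K)$. Causal curves in a fixed compact region of a globally hyperbolic spacetime have uniformly bounded $g_R$-length, contradicting $L^{g_R}(\gamma_n)\to\infty$. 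A line in $\T$ likewise produces two sequences in opposite directions whose lifted concatenations give arbitrarily long closed causal curves in $\overline M$, impossible under (i).

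The main obstacle is the Sullivan duality step in (ii) $\Rightarrow$ (iii). Soft Hahn-Banach in currents only delivers a continuous linear functional nonnegative on the cone of foliation cycles supported on future causal directions; upgrading this to a smooth closed $1$-form that is pointwise strictly positive on every future pointing causal vector requires both a careful mollification/approximation in the space of closed forms and a uniform gap between $\alpha$ and $\partial\T^\ast$, exploiting compactness of $M$ so that strict positivity on the closed unit $g_R$-sphere of future pointing causal vectors follows from strict positivity on the homology classes in $\T^1$.
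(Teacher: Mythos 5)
Note first that Theorem~\ref{stab2} is quoted from \cite{suh103}; the present paper contains no proof, so there is nothing in the source to compare against, and I can only judge the proposal on its own terms. The overall cyclic strategy (ii)$\Rightarrow$(iii)$\Rightarrow$(i)$\Rightarrow$(ii) and the Sullivan--Schwartzman duality for (ii)$\Rightarrow$(iii) are the right tools for a statement of this shape, and you are candid that the mollification/separation step in the Sullivan argument is where the real work lies. There is also a correct observation buried in (ii)$\Rightarrow$(iii): any nonzero invariant measure for the pregeodesic flow supported on future causal directions has rotation class in $\T\setminus\{0\}$, because a zero rotation class would, via Birkhoff and the definition of $\T^1$, force $0\in\T^1$.

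There are, however, two concrete gaps. The more serious one is in (i)$\Rightarrow$(ii): you assert that ``after translating by deck elements the endpoints stay in a compact $K$.'' This is false under the stated hypotheses. $0\in\T^1$ only says $\|\gamma_n(b_n)-\gamma_n(a_n)\|=o(L^{g_R}(\gamma_n))$; the displacement itself may still be unbounded, so after normalizing $\overline\gamma_n(a_n)$ into a fundamental domain the other endpoint can escape to infinity, and the curves do not sit inside a fixed diamond. Also ``uniformly bounded $g_R$-length of causal curves in a compact set'' is not a property of global hyperbolicity \emph{per se}; it is a consequence of strong causality plus compactness of a fixed set, and the constant depends on that set, which here is moving with $n$. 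To run this direction one must first extract subcurves of $\gamma_n$ with bounded displacement but $g_R$-length tending to infinity (a nontrivial pigeonhole/recurrence argument in the cocompact $H_1(M,\Z)$-action), or argue differently. The same defect infects the ``line in $\T$'' remark: the concatenations have small, not zero, displacement, so they are not closed causal curves in $\overline M$, and you are back to the same unresolved step. The second gap is in (iii)$\Rightarrow$(i): ``the slab modulo the deck action is compact'' is not correct, since the slab $\tau^{-1}([a,b])$ is preserved exactly by the corank-one subgroup $\ker\alpha\cap H_1(M,\Z)$, which does not act cocompactly on it. Fortunately this step does not need that claim. The uniform bound $\omega(v)\ge c\|v\|_{g_R}$ for future causal unit vectors (compactness of $M$) already gives that every causal curve from $\bar x$ to $\bar y$ has $g_R$-length at most $(\tau(\bar y)-\tau(\bar x))/c$, hence lies in a fixed compact $g_R$-ball by completeness; compactness of $J^+(\bar x)\cap J^-(\bar y)$ then follows from a standard limit-curve/causal-simplicity argument, not from any statement about the slab downstairs.
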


\begin{theorem}[\cite{suh110}]\label{T17}
Let $(M,g)$ be of class A. Then there exists a unique concave function $\mathfrak{l}\colon \T \rightarrow \R$ such that for 
every $\e >0$ there is a constant $\overline{C}(\e)<\infty$ with
\begin{enumerate}
\item $|\mathfrak{l}(h)-d(x,y)|\le \overline{C}(\e)$ for all $x,y\in \overline{M}$ with $y-x=h\in \T_\e$ and
\item $\mathfrak{l}(\lambda h)=\lambda \mathfrak{l}(h)$, for all $\lambda \ge 0$,
\item $\mathfrak{l}(h'+h)\ge \mathfrak{l}(h')+\mathfrak{l}(h)$ for all $h,h'\in\T$ and
\item $\mathfrak{l}(h)=\limsup_{h'\to h}\mathfrak{l}(h')$ for $h\in \partial \T$ and $h'\in \T$.
\end{enumerate}
We call $\mathfrak{l}$ the {\it stable time separation}.
\end{theorem}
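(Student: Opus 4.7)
The plan is to construct $\mathfrak{l}$ directly from the time separation $d$ on the Abelian cover $\overline M$, using that the deck transformation group $H_1(M,\Z)_\R$ acts on $\overline M$ by isometries of $\overline g$, and then to derive the four properties. The route has four stages: definition on integer classes via a superadditive limit; homogeneous extension to rational classes in $\T$; continuous extension to $\T^\circ$ via a direction-uniform Lipschitz bound; and definition on $\partial\T\setminus\{0\}$ through the $\limsup$-formula (4).

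Fix $\tilde x_0\in\overline M$ and an integer class $k\in\T\cap H_1(M,\Z)_\R$, and set $a_n:=d(\tilde x_0,\tilde x_0+nk)$, where $+nk$ denotes deck translation. Global hyperbolicity of $\overline M$ gives the reverse triangle inequality for $d$, and deck-isometry yields $d(\tilde x_0+mk,\tilde x_0+(m+n)k)=a_n$, so $a_{m+n}\ge a_m+a_n$. For an upper bound, theorem \ref{stab2}(iii) supplies, for any $\alpha\in(\T^\ast)^\circ$, a closed $1$-form $\omega$ representing $\alpha$ with spacelike kernels; the standard calibration estimate then forces $d(x,y)\le\langle\alpha,y-x\rangle+O(1)$ for all $x,y\in\overline M$ with $y\in J^+(x)$. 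Fekete's lemma yields $\mathfrak{l}(k):=\lim_n a_n/n=\sup_n a_n/n\in\R$, and fact \ref{F1} shows the limit is independent of $\tilde x_0$. Positive-homogeneous extension $\mathfrak{l}(k/q):=\mathfrak{l}(k)/q$ for $q\in\N$ is forced by (2); superadditivity (3) on rational classes in $\T$ follows from the reverse triangle inequality applied to the three translates $\tilde x_0,\tilde x_0+mk,\tilde x_0+m(k+k')$.

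The heart of the argument is property (1), the uniform bound $|d(x,y)-\mathfrak{l}(y-x)|\le\overline C(\e)$ on $\T_\e$. The upper bound is a refinement of the calibration inequality combined with the fact that on $\T^\circ$ the function $\mathfrak{l}$ is the infimum of $\langle\alpha,\cdot\rangle$ over supporting $\alpha\in(\T^\ast)^\circ$. For the lower bound, given $y-x=h\in\T_\e$ of large norm, Dirichlet-approximate $h$ by a multiple $nk$ of an integer class $k\in\T_{\e/2}$ with $\|h-nk\|$ uniformly bounded in $\|h\|$. Concatenate an almost-maximal chain $\tilde x\to\tilde x+k\to\cdots\to\tilde x+nk\to\tilde y$ in $\overline M$: each intermediate segment has length within $O(1)$ of $\mathfrak{l}(k)$, the closing segment is controlled by fact \ref{F1} and proposition \ref{P1}, and the whole chain is causal by proposition \ref{3.2} once $\|h\|$ is large enough. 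The resulting future pointing curve has length at least $n\mathfrak{l}(k)-O(1)=\mathfrak{l}(h)-O(1)$, so $d(x,y)\ge\mathfrak{l}(h)-\overline C(\e)$. Once (1) is in place, a direction-uniform Lipschitz bound for $\mathfrak{l}$ on rational classes in $\T_\e$ follows from (1) and the estimate $|d(x_1,y_1)-d(x_2,y_2)|\le\dist(x_1,x_2)+\dist(y_1,y_2)+O(1)$; $\mathfrak{l}$ then extends continuously to $\T^\circ$, and (4) is taken as the definition on $\partial\T\setminus\{0\}$ with $\mathfrak{l}(0):=0$.

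Uniqueness then follows: any function satisfying (1) and (2) coincides with $\lim_n a_n/n$ on integer classes, hence with $\mathfrak{l}$ on $\T^\circ$ by continuity and homogeneity, and (4) pins it down on the boundary. Properties (2) and (3) are immediate from the construction. The main obstacle will be the lower bound in (1): the Dirichlet approximation must be arranged so that $k\in\T_{\e/2}$ and the closing error is bounded uniformly in $\|h\|$, and the chain must remain causal throughout. This book-keeping is made possible by the uniform constants $\fil(g,g_R)$ of fact \ref{F1} and $\err(g,g_R)$ of proposition \ref{P1}, together with proposition \ref{3.2}, which guarantees that each concatenation point lies in the chronological future of the previous one.
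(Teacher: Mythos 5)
This theorem is cited from \cite{suh110}; the present paper reproduces it in the appendix as a requisite and gives no proof, so there is no in-paper argument to compare against. Your overall skeleton (superadditive deck-translation limit via Fekete, homogeneous extension, boundary by $\limsup$, uniqueness from (1)--(2) on integer classes) is the natural one. However, the lower bound of property (1) -- the hard content of the theorem -- has a genuine gap.

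You claim that the chain $\tilde x\to\tilde x+k\to\cdots\to\tilde x+nk\to\tilde y$ has $g$-length at least $n\mathfrak{l}(k)-O(1)$. By deck isometry each intermediate segment has length \emph{exactly} $a_1:=d(\tilde x_0,\tilde x_0+k)$, and Fekete gives $a_1\le\mathfrak{l}(k)=\sup_m a_m/m$, with strict inequality in general. So the chain has length $n a_1+O(1)=n\mathfrak{l}(k)-n[\mathfrak{l}(k)-a_1]+O(1)$, and the error $n[\mathfrak{l}(k)-a_1]$ grows linearly in $n\sim\|h\|$. Taking $k$ of larger norm does not repair this: you would then need $\mathfrak{l}(k)-a_1(k)$ bounded uniformly over large integer classes $k$, which is precisely the lower bound in (1) for the pair $(\tilde x_0,\tilde x_0+k)$, i.e.\ the statement under proof. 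Fekete alone yields only sublinear error $n\mathfrak{l}(k)-a_n=o(n)$, never $O(1)$. The $O(1)$ estimate is the Lorentzian analogue of Burago's theorem on periodic Riemannian metrics, and it cannot be extracted from the superadditive structure by concatenation; in this framework it is obtained through the calibration machinery of propositions \ref{P20-} and \ref{P20a} (equivariant Lipschitz calibrations and calibrated maximizers), together with global hyperbolicity. A smaller but related issue: in the Fekete step you assert $d(x,y)\le\langle\alpha,y-x\rangle+O(1)$ from theorem \ref{stab2}(iii); a $1$-form with spacelike kernels only gives $d(x,y)\le C\langle\alpha,y-x\rangle$ with $C$ depending on $\omega$. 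That suffices for Fekete, but the additive-error form you quote presupposes a normalization that again hides exactly the content of (1).
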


\begin{definition}[\cite{suh110}]
Let $(M,g)$ be a pseudo-Riemannian manifold and $g_R$ a complete Riemannian metric on $M$. We denote the reparameterization of the geodesic 
flow of $(M,g)$ w.r.t. $g_R$-arclength with the pregeodesic flow $\Phi\colon TM\times\R\to TM$. 
\end{definition}
Note that the pregeodesic flow is still a conservative flow, i.e. it is defined through a differential equation of second order on $M$. 
If not noted otherwise pregeodesics are always assumed to be parametrized by $g_R$-arclength. 

Using the properties of the pregeodesic flow we define rotation classes $\rho(\mu)$ for finite $\Phi$-invariant Borel measures $\mu$ by the condition
$\langle \alpha, \rho(\mu)\rangle =\int_{T^{1,R}M} \omega d\mu$ where $\omega$ represents $\alpha \in H^1(M,\R)$ (\cite{suh110}).

For the obvious reasons we restrict all considerations to measures supported in the future pointing causal vectors. 
For compact and vicious spacetimes follows that the set of rotation classes of finite invariant measures is $\T$. If we impose the class A condition 
we obtain that $\mathfrak{l}(h)=\max\{\int_{\Time(M,[g])}\sqrt{|g(v,v)|} d\mu(v)|\; \rho(\mu)=h\}$, where $\Time(M,[g])$ denotes the set of 
future pointing timelike vectors in $(M,g)$. An invariant measure $\mu$ with 
$$\int_{\Time(M,[g])}\sqrt{|g(v,v)|} d\mu(v)= \mathfrak{l}(\rho(\mu))$$
is called a maximal measure.

\begin{prop}\label{P10}[\cite{suh110}]
Let $(M,g)$ be of class A. Then the pregeodesic flow admits at least $\dim H_1(M,\R)$-many maximal ergodic probability measures.
\end{prop}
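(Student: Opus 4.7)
The plan is to combine the variational characterization of $\mathfrak{l}$ from Theorem~\ref{T17} with a Jensen-type argument applied to the ergodic decomposition of invariant probability measures.

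First, by Theorem~\ref{stab2} the cone $(\T^\ast)^\circ$ is nonempty, so $\T$ has nonempty interior and spans $H_1(M,\R)$. Setting $b:=\dim H_1(M,\R)$, I would pick a basis $h_1,\dots,h_b$ of $H_1(M,\R)$ with $h_i\in\T^\circ$. For each $i$ the set $\mathcal{N}_i:=\{\mu\in\mathfrak{M}^1_g:\rho(\mu)=h_i\}$ is a nonempty, weak-$\ast$ compact, convex subset of invariant probability measures, and the affine functional $\LF(\mu)=\int\sqrt{|g(v,v)|}\,d\mu$ (continuous in weak-$\ast$ topology since $T^{1,R}M$ is compact and the integrand is continuous on it) attains its supremum $\mathfrak{l}(h_i)$ on $\mathcal{N}_i$ by Theorem~\ref{T17}; I would select a maximizer $\mu_i\in\mathcal{N}_i$.

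The heart of the argument is to take the ergodic decomposition $\mu_i=\int\eta\,d\nu_i(\eta)$, with $\eta$ ranging over the ergodic $\Phi$-invariant probability measures. Since $\rho$ and $\LF$ are affine and weak-$\ast$ continuous, one has $\rho(\mu_i)=\int\rho(\eta)\,d\nu_i(\eta)$ and $\LF(\mu_i)=\int\LF(\eta)\,d\nu_i(\eta)$. Combining $\LF(\eta)\le\mathfrak{l}(\rho(\eta))$ with the concavity of $\mathfrak{l}$ on $\T$ (Theorem~\ref{T17}), Jensen's inequality would give
\[
\LF(\mu_i)=\int\LF(\eta)\,d\nu_i\le\int\mathfrak{l}(\rho(\eta))\,d\nu_i\le\mathfrak{l}\!\left(\int\rho(\eta)\,d\nu_i\right)=\mathfrak{l}(\rho(\mu_i))=\LF(\mu_i),
\]
forcing equality throughout. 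Therefore $\nu_i$-almost every ergodic component $\eta$ of $\mu_i$ satisfies $\LF(\eta)=\mathfrak{l}(\rho(\eta))$, i.e.\ is itself a maximal ergodic probability measure.

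Finally, the rotation vectors of the ergodic components of $\mu_i$ average to $h_i$, so the collection $\mathcal{R}:=\bigcup_{i=1}^{b}\{\rho(\eta):\eta\in\supp\nu_i\}$ has convex hull containing $\{h_1,\dots,h_b\}$ and hence spans $H_1(M,\R)$. I would then pick $b$ elements of $\mathcal{R}$ whose rotation vectors form a basis; the corresponding $b$ ergodic measures are automatically pairwise distinct and, by the previous step, each is a maximal ergodic probability measure of~$\Phi$. I expect the main obstacle to be the preservation of maximality under ergodic decomposition: this relies not only on the concavity of $\mathfrak{l}$ but also on the measurability and integrability of $\eta\mapsto\mathfrak{l}(\rho(\eta))$ over the ergodic components, which in turn rest on the upper semicontinuity of $\mathfrak{l}$ on $\T$ (implicit in property~(4) of Theorem~\ref{T17}) together with the weak-$\ast$ continuity of $\rho$.
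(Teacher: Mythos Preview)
The paper does not prove Proposition~\ref{P10} itself---it is quoted from \cite{suh110} as a requisite.  However, the proof of Proposition~\ref{P7a} explicitly says ``like in the proof of proposition~\ref{P10}'' and then carries out the relevant step, so the intended argument can be read off from there: one considers the convex body $\mathcal{K}=\{(h,t)\,:\,h\in D,\ 0\le t\le\mathfrak l(h)\}$ over a cross-section $D=\alpha^{-1}(1)\cap\T$, writes an interior point as a convex combination of at most $b$ extremal points $(h'_k,\mathfrak l(h'_k))$, and for each $k$ takes the maximal $\lambda_k>0$ with $(\rho(\mu),\LF(\mu))=\lambda_k(h'_k,\mathfrak l(h'_k))$ for some $\mu\in\mathfrak M^1_g$.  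The fibre over that point is a face of $\mathfrak M^1_g$, and Krein--Milman then yields an extremal---hence ergodic---maximal measure with rotation class along $h'_k$.

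Your route is genuinely different: you bypass the geometry of $\mathcal K$ and the Krein--Milman step entirely, replacing them by the ergodic decomposition of a single maximal measure over each chosen $h_i$ together with Jensen's inequality for the concave function $\mathfrak l$.  This is correct and is the standard ``Mather'' argument; equality in the chain forces $\LF(\eta)=\mathfrak l(\rho(\eta))$ for $\nu_i$-a.e.\ ergodic component, which is all you need.  One small point to tidy: $\mathcal N_i=\{\mu\in\mathfrak M^1_g:\rho(\mu)=h_i\}$ need not be nonempty for a \emph{prescribed} $h_i\in\T^\circ$, since the rotation set of probability measures is only a compact convex slice of $\T$; you should either pick the $h_i$ inside that slice from the outset, or---more in the spirit of the paper---allow $\rho(\mu_i)$ to be a positive multiple of $h_i$, which changes nothing downstream because $\mathfrak l$ is positively homogeneous.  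The Krein--Milman approach buys a sharper localisation (the ergodic measures sit over extremal points of $\mathcal K$, which is exactly what is exploited in Corollary~\ref{C18}); your approach is shorter and avoids any discussion of faces, at the cost of less information about \emph{which} rotation classes carry the ergodic measures.
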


Let $\alpha\in H^1(M,\R)$. We call a function $\tau\colon \overline{M}\to\R$ $\alpha$-equivariant if $\tau(x+k)=\tau(x)+\alpha(k)$ for all 
$x\in \overline{M}$ and $k\in H_1(M,\Z)$. 

Denote with $\mathfrak{l}^\ast\colon \T^\ast \to \R$ the dual function of the stable time separation, i.e. $\mathfrak{l}^\ast(\alpha)=\min\{\alpha(h)|\;
\mathfrak{l}(h)=1\}$.

\begin{definition}[\cite{suh110}]
Let $\alpha \in (\T^\ast)^\circ$. An $\alpha$-equivariant and Lipschitz continuous function $\tau \colon \overline{M}\to\R$ is a calibration 
representing $\alpha$ if $\tau(\overline{q})-\tau(\overline{p})\ge\mathfrak{l}^\ast(\alpha)d(\overline{p},\overline{q})$ for all 
$\overline{p},\overline{q}\in\overline{M}$ with $\overline{q}\in J^+(\overline{p})$.
\end{definition}

Note that every calibration is automatically a time function, i.e. strictly monotonous along any causal curve.

\begin{prop}[\cite{suh110}]\label{P20-}
Let $(M,g)$ be a class A spacetime, $\omega\in \alpha\in(\T^\ast)^\circ$ and $F\colon \overline{M}\to \R$ a primitive of 
$\overline{\pi}^\ast(\omega)$. Then the function 
$$\tau_\omega\colon \overline{M}\to \R,\; x\mapsto \liminf_{\substack{y\in J^+(x),\\ \dist(x,y)\to\infty}}
  [F(y)-\mathfrak{l}^\ast(\alpha)\, d(x,y)]$$ 
is a calibration representing $\alpha$.
\end{prop}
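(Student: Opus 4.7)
The plan is to verify separately the three ingredients of a calibration representing $\alpha$: (a) $\alpha$-equivariance, (b) the key inequality $\tau_\omega(q) - \tau_\omega(p) \ge \mathfrak{l}^\ast(\alpha)\, d(p,q)$ for $q \in J^+(p)$, and (c) global Lipschitz continuity. Items (a) and (b) are essentially formal consequences of the deck-transformation invariance of $(J^+,d)$ and the reverse triangle inequality. Item (c) is the main technical point, since $\tau_\omega$ is defined through an unbounded liminf and $d(\cdot,y)$ is only lower semicontinuous in general.

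For (a), $\overline{\pi}^{\ast}\omega$ is invariant under the deck action of $H_1(M,\Z)$ on $\overline{M}$, whence any primitive satisfies $F(x+k)=F(x)+\alpha(k)$; combined with $J^+(x+k)=J^+(x)+k$ and $d(x+k,y+k)=d(x,y)$, the substitution $y\mapsto y+k$ inside the defining liminf yields $\tau_\omega(x+k)=\tau_\omega(x)+\alpha(k)$. For (b), given $q\in J^+(p)$, I choose a sequence $y_n\in J^+(q)$ with $\dist(q,y_n)\to\infty$ realizing the liminf at $q$; since $q\in J^+(p)$ the sequence lies in $J^+(p)$ as well and $\dist(p,y_n)\to\infty$, so it is admissible in the liminf defining $\tau_\omega(p)$. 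The reverse triangle inequality $d(p,y_n)\ge d(p,q)+d(q,y_n)$ then gives
$$
F(y_n)-\mathfrak{l}^{\ast}(\alpha)d(p,y_n) \le F(y_n)-\mathfrak{l}^{\ast}(\alpha)d(q,y_n) - \mathfrak{l}^{\ast}(\alpha)d(p,q),
$$
and passing to liminf yields $\tau_\omega(p)\le\tau_\omega(q)-\mathfrak{l}^{\ast}(\alpha)d(p,q)$.

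For (c), I first show that $\tau_\omega$ is finite: the duality $\alpha(h)\ge\mathfrak{l}^{\ast}(\alpha)\mathfrak{l}(h)$ for $h\in\T$ together with $d(x,y)\le\mathfrak{l}(y-x)+\overline{C}(\e)$ from Theorem \ref{T17} (applied along sequences with $y-x\in\T_\e$) gives the lower bound $F(y)-\mathfrak{l}^{\ast}(\alpha)d(x,y)\ge F(x)-\mathfrak{l}^{\ast}(\alpha)\overline{C}(\e)$, while testing the liminf against a sequence $y_n$ advancing along a direction $h\in\T^\circ$ where $\alpha$ is a support function of $\mathfrak{l}$ produces a matching upper bound. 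For the Lipschitz estimate I fix $x_1,x_2$ in a common convex normal neighborhood $V\subset\overline{M}$, extract a sequence $y_n$ realizing $\tau_\omega(x_1)$ with $y_n-x_1\in\T_\e$ (admissible by the finiteness argument), observe via Proposition \ref{3.2} that $y_n\in J^+(x_2)$ for $n$ large, and pick a maximizer $\gamma_n$ from $x_1$ to $y_n$. Taking an intermediate point $p_n:=\gamma_n(t_0)\in V$ at fixed small $g_R$-distance from $x_1$, the reverse triangle inequality yields $d(x_1,y_n)=d_V(x_1,p_n)+d(p_n,y_n)$ and $d(x_2,y_n)\ge d_V(x_2,p_n)+d(p_n,y_n)$; since $d_V$ is smooth on the strictly timelike pairs in $V$, subtraction produces $d(x_1,y_n)-d(x_2,y_n)\le L(V)\dist(x_1,x_2)$, so passing to the liminf delivers $\tau_\omega(x_2)-\tau_\omega(x_1)\le\mathfrak{l}^{\ast}(\alpha)L(V)\dist(x_1,x_2)$. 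A symmetric argument controls the other direction, and $\alpha$-equivariance together with compactness of a fundamental domain upgrades this to a global Lipschitz bound. The delicate point, which constitutes the main obstacle, is ensuring that $p_n$ lies in $I^+(x_1)\cap I^+(x_2)$ with strict timelike separation so that $d_V$ is smooth there; in cases where $\gamma_n$ could begin with a lightlike segment one must first perturb through the vicious property into a nearby point of $I^+(x_1)\cap I^+(x_2)\cap V$ and compare the perturbed and original estimates.
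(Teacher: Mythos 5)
Proposition \ref{P20-} is stated in the appendix of prerequisites, where the paper defers to \cite{suh110} for its proof; there is therefore no in-paper argument to compare against, and the following evaluates your proposal on its own terms. Your parts (a) and (b) are correct and are the natural arguments: the deck-equivariance $F(x+k)=F(x)+\alpha(k)$ together with $J^+(x+k)=J^+(x)+k$ and $d(x+k,y+k)=d(x,y)$ gives $\alpha$-equivariance under the substitution $y\mapsto y+k$; and a realizing sequence at $q$ is admissible at $p$ when $q\in J^+(p)$, so the reverse triangle inequality, using $\mathfrak{l}^\ast(\alpha)>0$ for $\alpha\in(\T^\ast)^\circ$, yields the calibration inequality.

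Part (c), the Lipschitz continuity, is where the real content lies, and your sketch leaves substantive gaps. First, Theorem \ref{T17}(1) only bounds $d(x,y)$ for $y-x\in\T_\e$, whereas the defining $\liminf$ ranges over all $y\in J^+(x)$, including $y-x$ arbitrarily close to $\partial\T$ (or even outside $\T$ by up to $\err(g,g_R)$). To obtain finiteness you need a global estimate of the form $d(x,y)\le\mathfrak{l}^\ast(\alpha)^{-1}\alpha(y-x)+C_\alpha$ valid for all causally related pairs in $\overline{M}$; this is the Lorentzian analog of Ma\~n\'e criticality and does not follow from \ref{T17} alone. Second, for the same reason the claim that a realizing sequence can be taken with $y_n-x_1\in\T_\e$ for a fixed $\e>0$ is not a consequence of finiteness: one must show that along sequences with $y_n-x_1$ drifting toward $\partial\T$ the bracket tends to $+\infty$, which again uses the global estimate together with $\alpha\in(\T^\ast)^\circ$. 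Third, and this is the crux that you flag as the ``delicate point'': the intermediate point $p_n$ on the maximizer from $x_1$ to $y_n$ must lie in $I^+(x_2)$ with $\exp^{-1}_{x_2}(p_n)$ bounded away from the light cone uniformly in $n$, so that $z\mapsto d_V(z,p_n)$ has a uniformly bounded gradient near $\{x_1,x_2\}$. This is precisely the requirement that the tangent of $\gamma_n$ at $x_1$ be uniformly timelike, and ``perturbing through viciousness'' is not a quantitative argument and cannot deliver such a bound. The mechanism the paper's framework supplies is the calibration structure: a realizing sequence is asymptotically calibrated in the direction $\alpha$, and Propositions \ref{P20a} and \ref{P20+} --- which are class~A statements, not class~A$_1$ --- separate the tangents of calibrated curves uniformly from $\Light(M,[g])$; a limit argument is then needed to transfer this to the finite segments $\gamma_n$ near the endpoint $x_1$. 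Without that ingredient the Lipschitz estimate is not established, so part (c) remains a genuine gap rather than a technicality.
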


\begin{definition}[\cite{suh110}]
Let $(M,g)$ be a class A spacetime and $\tau\colon \overline{M}\to\R$ a calibration representing $\alpha$. A future pointing pregeodesic 
$\gamma \colon \R\to M$ is said to be calibrated by the calibration $\tau$ if 
$$\tau(\overline{\gamma}(t))-\tau(\overline{\gamma}(s))=\mathfrak{l}^\ast(\alpha)L^g(\gamma|_{[s,t]})$$
for one (hence every) lift $\overline{\gamma}$ to $\overline{M}$ and all $s<t\in \R$.
\end{definition}

We say that a future pointing pregeodesic $\gamma\colon\R\to M$ is a maximizer if the one (hence any) lift to the Abelian covering space is maximal, 
i.e. $L^{\overline{g}}(\overline{\gamma}|_{[s,t]})=d(\overline{\gamma}(s),\overline{\gamma}(t))$ for all $s\le t\in\R$. Using the definition of a calibration it 
is obvious that any calibrated curve is a maximizer.

We say that a finite Borel measure $\mu$ on $T^{1,R}M$ is a limit measure of the future pointing pregeodesic $\gamma\colon\R\to M$ if there exists a 
sequence of intervals $[a_n,b_n]$ with $b_n-a_n\to \infty$ and a constant $C\in (0,\infty)$ such that 
$$\frac{C}{b_n-a_n}(\gamma')_\sharp(\mathcal{L}^1|_{[a_n,b_n]})\stackrel{\ast}{\rightharpoonup}\mu$$
where the convergence is the weak-$\ast$ convergence in $C^0(T^{1,R}M,\R)'$. By an elementary calculation we see that a limit measure is always a
$\Phi$-invariant measure.

Denote by $\Light(M,[g])$ the set of future pointing lightlike tangents vectors in $TM$. 

For $\alpha\in \T^\ast$ define $\mathfrak{M}_\alpha$ to be the set of invariant measures $\mu$ that maximize 
$$\mu\mapsto \mathfrak{l}^\ast(\alpha)\int_{T^{1,R}M}\sqrt{|g(v,v)|}d\mu(v)-\langle \alpha,\rho(\mu)\rangle.$$
Set $\supp\mathfrak{M}_\alpha:=\cup_{\mu\in\mathfrak{M}_\alpha}\supp \mu$.

\begin{prop}[\cite{suh110}]\label{P20+}
Let $\alpha\in (\T^\ast)^\circ$ and $\tau\colon\overline{M}\to \R$ a calibration representing $\alpha$. Further let $\gamma\colon \R\to M$ be a future 
pointing maximizer calibrated by $\tau$. Then all limit measures of $\gamma$ belong to $\mathfrak{M}_{\alpha}$. Moreover the image of the tangential 
mapping $t\mapsto \gamma'(t)$ can be separated from $\Light(M,[g])$, i.e. there exists $\e=\e(\alpha)>0$ such that $\dist(\gamma'(t),\Light(M,[g])\ge \e$ 
for all $t\in \R$.
 \end{prop}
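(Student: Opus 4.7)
The plan is to split the claim into its two parts---(1) every limit measure $\mu$ of $\gamma$ lies in $\mathfrak{M}_\alpha$, and (2) $\gamma'$ is uniformly separated from $\Light(M,[g])$---and attack them in order.

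For (1), the strategy is to use the calibration identity along the defining intervals $[a_n,b_n]$. Fix a closed $1$-form $\omega\in\alpha$ and a primitive $F\colon\overline{M}\to\R$ of $\overline{\pi}^\ast\omega$; by construction $F$ is $\alpha$-equivariant, so $\tau-F$ is $\pi_1(M)$-invariant and descends to a bounded continuous function on compact $M$. Inserting this into the calibration equality
\[\mathfrak{l}^\ast(\alpha)L^g(\gamma|_{[a_n,b_n]})=\tau(\overline{\gamma}(b_n))-\tau(\overline{\gamma}(a_n))\]
rewrites the right-hand side as $\int_{a_n}^{b_n}\omega(\gamma'(t))\,dt+O(1)$. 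Multiplying by $C/(b_n-a_n)$ and passing to the weak-$\ast$ limit along the defining sequence yields
\[\mathfrak{l}^\ast(\alpha)\LF(\mu)=\langle\alpha,\rho(\mu)\rangle.\]
On the other hand, for every $\Phi$-invariant measure $\nu$ the variational characterisation of $\mathfrak{l}$ gives $\LF(\nu)\le\mathfrak{l}(\rho(\nu))$, and the definition of $\mathfrak{l}^\ast$ gives $\mathfrak{l}^\ast(\alpha)\mathfrak{l}(h)\le\alpha(h)$ on $\T$, so the functional $\mathfrak{l}^\ast(\alpha)\LF(\nu)-\langle\alpha,\rho(\nu)\rangle$ is everywhere $\le 0$. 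Since equality holds for $\mu$, $\mu$ belongs to $\mathfrak{M}_\alpha$.

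For (2), a pointwise version is almost immediate: if $\gamma'(t_0)\in\Light(M,[g])$ then, because $\Light$ is $\Phi$-invariant, $\gamma$ is lightlike on a neighbourhood of $t_0$, whence $L^g(\gamma|_{[s,t]})=0$ there and the calibration identity forces $\tau\circ\overline{\gamma}$ to be constant on a causal arc---contradicting the fact that a calibration is a strict time function. To promote this to uniform separation, suppose to the contrary that $\gamma'(t_n)\to v_\infty\in\Light$ along some sequence; note $v_\infty\neq 0$ since $|\gamma'|_{g_R}\equiv 1$ and $T^{1,R}M$ is compact. Smoothness of the pregeodesic flow yields $C^1$-convergence on compacts of the shifted pregeodesics $\gamma(\cdot+t_n)$ to the pregeodesic $\zeta$ starting at $v_\infty$, and $\zeta$ is lightlike by invariance of $\Light$.

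The remaining, delicate step is to transport the calibration identity through the limit, since the lifts $\overline{\gamma}(t_n+\sigma)$ escape every compact in $\overline{M}$. I would renormalise by integer classes: choose $k_n\in H_1(M,\Z)$ so that $\tilde\gamma_n(\sigma):=\overline{\gamma}(t_n+\sigma)-k_n$ stays in a bounded set and subconverges to a lift $\tilde\zeta$ of $\zeta$. The $\alpha$-equivariance of $\tau$ is precisely what makes the identity
\[\tau(\tilde\gamma_n(t))-\tau(\tilde\gamma_n(s))=\mathfrak{l}^\ast(\alpha)L^g(\gamma|_{[t_n+s,\,t_n+t]})\]
invariant under the shift $k_n$. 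Passing to the limit, the left-hand side tends to $\tau(\tilde\zeta(t))-\tau(\tilde\zeta(s))$ by continuity of $\tau$, while the right tends to $\mathfrak{l}^\ast(\alpha)L^g(\zeta|_{[s,t]})=0$; so $\tau$ would be constant along the causal lift $\tilde\zeta$, contradicting strict monotonicity and closing the argument. I expect this shift-and-renormalise step---relying crucially on $\alpha$-equivariance to make the calibration identity survive the limit---to be the main obstacle.
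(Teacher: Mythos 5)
The paper does not actually prove Proposition \ref{P20+}; it is quoted verbatim from \cite{suh110} in the ``Requisites'' appendix, so there is no in-text proof to compare against. Judged on its own terms, your argument for part (1) is correct and is the standard one: writing $\tau=F+(\tau-F)$ with $F$ a primitive of $\overline{\pi}^\ast\omega$, the $H_1(M,\Z)$-invariance of $\tau-F$ and compactness of $M$ kill the boundary terms in the limit, giving $\mathfrak{l}^\ast(\alpha)\LF(\mu)=\langle\alpha,\rho(\mu)\rangle$, while the reverse inequality $\mathfrak{l}^\ast(\alpha)\LF(\nu)\le\langle\alpha,\rho(\nu)\rangle$ holds for every invariant $\nu$ by $\LF(\nu)\le\mathfrak{l}(\rho(\nu))$ and $\mathfrak{l}^\ast(\alpha)\mathfrak{l}\le\alpha$ on $\T$; hence $\mu$ maximizes and lies in $\mathfrak{M}_\alpha$. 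Your part (2) is also sound as far as it goes: $\Phi$-invariance of $\Light$ gives the pointwise exclusion, and the shift-and-renormalise argument (using $\alpha$-equivariance to make the calibration identity invariant under deck transformations, Lipschitz continuity of $\tau$ to pass the left side to the limit, and $C^1$-continuity of $L^g$ plus $\Phi$-invariance of $\Light$ for the right side) shows $\overline{\gamma'(\R)}\cap\Light=\emptyset$ and hence a positive lower bound on $\dist(\gamma'(t),\Light)$.

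The one genuine gap concerns the quantifiers in the final claim. The statement asserts $\e=\e(\alpha)$, i.e.\ a lower bound depending \emph{only} on $\alpha$, uniform over all calibrated maximizers $\gamma$ and over all calibrations $\tau$ representing $\alpha$. Your compactness argument fixes one pair $(\gamma,\tau)$ and produces a bound $\e(\gamma,\tau)$; to get the uniform constant you would have to diagonalise over a sequence $(\gamma_n,\tau_n)$ with $\gamma_n'(0)\to v_\infty\in\Light$, which requires extracting a convergent subsequence of the (normalised) calibrations $\tau_n$ as well, i.e.\ a uniform Lipschitz bound on the set of calibrations representing $\alpha$ followed by Arzel\`a--Ascoli. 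That ingredient is not supplied. This gap is, however, immaterial for every use of the proposition made in this paper (Propositions \ref{P6a} and \ref{P7a} only invoke the per-$\gamma$ statement on a fixed compact $\Phi$-invariant support), so I would regard your proof as capturing the essential content while stopping one compactness step short of the full uniformity claimed.
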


\begin{prop}[\cite{suh110}]\label{P20a}
For $\alpha\in (\T^\ast)^\circ$ any pregeodesic $\gamma$ with $\gamma'\subset \supp \mathfrak{M}_\alpha$ is calibrated by every calibration 
representing $\alpha$. In particular there exist calibrated curves.
\end{prop}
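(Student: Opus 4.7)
The plan is to first extract pointwise information from the integral characterization of $\mathfrak{M}_\alpha$ by differentiating the calibration inequality, and then promote this from a $\mu$-a.e.\ statement to one on all of $\supp\mathfrak{M}_\alpha$ using closedness of the calibration condition under the pregeodesic flow. Fix a calibration $\tau$ representing $\alpha$, pick a smooth representative $\omega\in\alpha$, and choose a primitive $F\colon\overline{M}\to\R$ of the lift $\overline{\omega}$. Since both $\tau$ and $F$ are $\alpha$-equivariant, $\tau-F$ is $\pi_1(M)$-invariant and descends to a Lipschitz function $u\colon M\to\R$; the distributional derivative of $\tau$ corresponds to the Lipschitz $1$-form $\omega+du$ on $M$, which still represents $\alpha$. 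Introduce the Borel function
$$f(v):=(\omega+du)_{\pi(v)}(v)-\mathfrak{l}^\ast(\alpha)\sqrt{|g(v,v)|}$$
on $T^{1,R}M$. Differentiating the chain $\tau(\overline{\gamma}(t))-\tau(\overline{\gamma}(s))\ge \mathfrak{l}^\ast(\alpha)\,d(\overline{\gamma}(s),\overline{\gamma}(t))\ge \mathfrak{l}^\ast(\alpha)L^g(\gamma|_{[s,t]})$ along short future-pointing causal curves shows $f\ge 0$ on future-pointing causal vectors wherever $du$ is defined.

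Next, for any $\mu\in\mathfrak{M}_\alpha$, the claim is that $\int f\,d\mu=0$. The integral of $\omega$ against $\mu$ equals $\alpha(\rho(\mu))$ by the definition of the rotation class, and $\int du\,d\mu=0$ by $\Phi$-invariance of $\mu$ together with boundedness of $u$ on the compact $M$ (a standard Birkhoff cocycle argument). On the other hand, membership in $\mathfrak{M}_\alpha$ together with the identity $\int\sqrt{|g(v,v)|}\,d\mu=\mathfrak{l}(\rho(\mu))$ coming from the maximality of $\mu$, and the equality $\mathfrak{l}^\ast(\alpha)\mathfrak{l}(h)=\alpha(h)$ on the contact locus of $\alpha$, yield $\mathfrak{l}^\ast(\alpha)\int\sqrt{|g(v,v)|}\,d\mu=\alpha(\rho(\mu))$. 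Subtracting gives $\int f\,d\mu=0$, and combined with $f\ge 0$ this forces $f=0$ $\mu$-a.e.

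Let $C\subset T^{1,R}M$ denote the set of $v$ for which $\gamma_v$ is calibrated by $\tau$. Then $C$ is $\Phi$-invariant and closed: the latter follows from continuous dependence of $\gamma_v$ on its initial condition (as a solution of a second-order ODE) together with continuity of $\tau$ and of $t\mapsto L^g(\gamma_v|_{[0,t]})$ under $C^1$-convergence on compact intervals. For $\mu$-a.e.\ $v$, Fubini along flow lines gives $f(\Phi_s v)=0$ for $\mathcal{L}^1$-a.e.\ $s$, so the non-decreasing Lipschitz function $s\mapsto \tau(\overline{\gamma}_v(s))-\mathfrak{l}^\ast(\alpha)L^g(\gamma_v|_{[0,s]})$ has zero derivative almost everywhere and is therefore constant, i.e.\ $v\in C$. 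By closedness, $\supp\mu\subset C$ for every $\mu\in\mathfrak{M}_\alpha$, and hence $\supp\mathfrak{M}_\alpha=\overline{\bigcup_{\mu\in\mathfrak{M}_\alpha}\supp\mu}\subset C$, which is the main assertion. The ``in particular'' clause follows by combining Proposition \ref{P20-} (which produces a calibration representing any $\alpha\in(\T^\ast)^\circ$) with Proposition \ref{P10} applied to a rotation vector realizing $\mathfrak{l}^\ast(\alpha)$, ensuring $\mathfrak{M}_\alpha\neq\emptyset$.

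The principal obstacle is the interplay between $u$ being only Lipschitz, so that $du$ exists merely Lebesgue-a.e.\ on $M$, and the measure-theoretic arguments taking place on $T^{1,R}M$. One has to verify, via a disintegration of $\mu$ along flow lines and Rademacher's theorem applied to $s\mapsto u(\pi(\gamma_v(s)))$, that $f$ is $\mu$-measurable and defined $\mu$-a.e., and that the Fubini step producing $f(\Phi_s v)=0$ for $\mathcal{L}^1$-a.e.\ $s$ along $\mu$-a.e.\ orbit is legitimate. Once this regularity point is settled, the remaining steps are soft.
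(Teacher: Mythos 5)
Your strategy is the standard weak--KAM argument and is correct in outline: identify a nonnegative ``calibration defect'' observable, show it has zero $\mu$-integral for every $\mu\in\mathfrak{M}_\alpha$, conclude it vanishes $\mu$-a.e., and upgrade to $\supp\mu$ via closedness of the calibrated set. The identity $\mathfrak{l}^\ast(\alpha)\int\sqrt{|g|}\,d\mu=\alpha(\rho(\mu))$ on $\mathfrak{M}_\alpha$ is correct (the functional defining $\mathfrak{M}_\alpha$ has maximum value $0$, attained on the contact locus), and your closedness argument for $C$ goes through because $\tau(\overline{\gamma}_v(t))-\tau(\overline{\gamma}_v(s))$ is lift-independent by $\alpha$-equivariance and hence depends continuously on $v$ under $C^1$-convergence.

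The weak point is exactly the one you flag at the end, and it is not merely a verification to be ``settled'': writing $f(v)=(\omega+du)_{\pi(v)}(v)-\mathfrak{l}^\ast(\alpha)\sqrt{|g(v,v)|}$ presupposes that $du$ exists at $\pi(v)$. Rademacher gives this Lebesgue-a.e.\ on $M$, but nothing prevents $\pi_\ast\mu$ from being singular with respect to Lebesgue measure, so $f$ may be undefined on a $\mu$-positive set, and then neither $\int du\,d\mu=0$ nor the a.e.\ vanishing of $f$ has a priori meaning. Moreover, Rademacher applied to $s\mapsto u(\gamma_v(s))$ produces an along-orbit derivative, which need not equal $du_{\gamma_v(s)}(\gamma_v'(s))$ because $du$ itself may fail to exist at $\gamma_v(s)$; so the Fubini step as you describe it does not immediately close the gap. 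The clean repair avoids differentiating $u$ entirely. Set
$\Psi_T(v):=\bigl[\tau(\overline{\gamma}_v(T))-\tau(\overline{\gamma}_v(0))\bigr]-\mathfrak{l}^\ast(\alpha)\,L^g(\gamma_v|_{[0,T]})$,
a continuous cocycle over $\Phi$ which is nonnegative for $T\ge 0$ by the calibration inequality. Decomposing $\tau=F+u$ and using only $\Phi_T$-invariance of $\mu$ and boundedness of $u$ gives $\int\Psi_T\,d\mu=T\alpha(\rho(\mu))+0-T\mathfrak{l}^\ast(\alpha)\int\sqrt{|g|}\,d\mu=0$, with no Birkhoff theorem, no Rademacher, and no measurability issue. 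Hence $\Psi_T=0$ $\mu$-a.e.\ for each fixed $T$; intersecting over a countable dense set of $T>0$ and using continuity of $T\mapsto\Psi_T(v)$ gives $\Psi_T(v)=0$ for all $T\ge 0$ for $\mu$-a.e.\ $v$, and the cocycle identity $\Psi_{S+T}(v)=\Psi_S(v)+\Psi_T(\Phi_S v)$ together with $\Phi$-invariance of $\mu$ then handles $T<0$. This yields $\gamma_v$ calibrated for $\mu$-a.e.\ $v$, after which your closedness step finishes the proof as written, and the ``in particular'' clause follows from Proposition \ref{P20-} plus nonemptiness of $\mathfrak{M}_\alpha$ as you indicate.
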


\begin{cor}[\cite{suh110}]\label{P21}
Let $(M,g)$ be of class A. Then there exists a maximal ergodic measure  $\mu$ and $\e>0$ such that 
$$\dist (\supp \mu ,Light(M,[g]))\ge \e.$$
\end{cor}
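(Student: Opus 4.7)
The plan is to construct a maximal ergodic measure inside the distinguished set $\mathfrak{M}_\alpha$ for some $\alpha\in(\T^\ast)^\circ$, then use Propositions \ref{P20a} and \ref{P20+} to propagate the uniform separation from $\Light(M,[g])$ of calibrated curves to the support of the measure.

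First, I fix an $\alpha\in(\T^\ast)^\circ$, which exists by Theorem \ref{stab2}(iii). Because $M$ is compact, $T^{1,R}M$ is compact, so the set $\mathcal{P}^\Phi$ of $\Phi$-invariant Borel probability measures supported on future pointing causal vectors is weak-$\ast$ compact and convex. The functional
$$F_\alpha(\mu):=\mathfrak{l}^\ast(\alpha)\int_{T^{1,R}M}\sqrt{|g(v,v)|}\,d\mu(v)-\langle\alpha,\rho(\mu)\rangle$$
is affine and weak-$\ast$ continuous on $\mathcal{P}^\Phi$ (continuity of $\rho$ follows from its definition as integration against fixed representatives $\omega_i$). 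A short computation from Theorem \ref{T17} and the definition of $\mathfrak{l}^\ast$ gives $\alpha(h)\ge \mathfrak{l}^\ast(\alpha)\mathfrak{l}(h)$ on $\T$, and combined with $\int\sqrt{|g|}d\mu\le \mathfrak{l}(\rho(\mu))$ this shows $F_\alpha\le 0$ on $\mathcal{P}^\Phi$. Picking $h\in\T^\circ$ realising the minimum $\mathfrak{l}^\ast(\alpha)=\alpha(h)/\mathfrak{l}(h)$ and any maximal measure with rotation $h$ (which exists by the definition of $\mathfrak{l}$) yields $F_\alpha=0$ at that measure. Hence the set $\mathfrak{M}_\alpha\cap\mathcal{P}^\Phi$ is a nonempty compact convex \emph{face} of $\mathcal{P}^\Phi$. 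By Krein-Milman it has an extreme point $\mu$, and extreme points of a face are extreme points of the ambient convex set, hence ergodic. Moreover $F_\alpha(\mu)=0$ forces $\mu$ to be maximal.

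Second, I transfer the separation bound. Proposition \ref{P20-} supplies a calibration $\tau$ representing $\alpha$. Let $v\in\supp\mu$ be arbitrary and let $\gamma_v\colon\R\to M$ be the pregeodesic with $\gamma'_v(0)=v$. Since $\supp\mu$ is closed and $\Phi$-invariant, $\gamma'_v(t)=\Phi_t(v)\in\supp\mu\subset\supp\mathfrak{M}_\alpha$ for all $t\in\R$. Proposition \ref{P20a} therefore implies that $\gamma_v$ is calibrated by $\tau$, and Proposition \ref{P20+} produces a constant $\e=\e(\alpha)>0$, depending only on $\alpha$ and not on the particular calibrated curve, such that $\dist(\gamma'_v(t),\Light(M,[g]))\ge\e$ for all $t$. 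Evaluating at $t=0$ and letting $v$ range over $\supp\mu$ gives $\dist(\supp\mu,\Light(M,[g]))\ge\e$, as desired.

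The main obstacle is the ergodicity step: one must recognise $\mathfrak{M}_\alpha\cap\mathcal{P}^\Phi$ as a face (not merely a nonempty convex set) so that Krein-Milman extreme points really are ergodic. This is what the affine and continuous structure of $F_\alpha$ delivers. Once the ergodic, maximal $\mu\in\mathfrak{M}_\alpha$ is in hand, the geometric conclusions \ref{P20a} and \ref{P20+} do all the remaining work, and the uniformity of $\e(\alpha)$ over calibrated curves in \ref{P20+} is exactly the feature that promotes the pointwise separation to a bound on $\supp\mu$.
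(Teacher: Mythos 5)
Your proof is correct and follows essentially the same strategy the paper deploys for the stronger Proposition \ref{P7a}: produce an ergodic maximal measure by applying Krein--Milman to a compact convex face of $\mathcal{P}^\Phi$ (you use $\{F_\alpha=0\}$ for fixed $\alpha\in(\T^\ast)^\circ$, where \ref{P7a} uses the preimage of a point under $\mu\mapsto(\rho(\mu),\LF(\mu))$), and then invoke Propositions \ref{P20a} and \ref{P20+} to obtain the uniform separation of $\supp\mu$ from $\Light(M,[g])$. The corollary itself is quoted from \cite{suh110} without a proof in this paper, so the comparison is to that reference and to the argument in \ref{P7a}; your reconstruction matches them.
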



\section{On the Definition of Causal Curves}

The notion of causal curves is best defined for spacetimes first. This represents no restriction since any Lorentzian manifold admits a time-orientable
twofold cover (\cite{ger0}). Assume that $(M,g)$ is time-oriented, i.e. $(M,g)$ is a spacetime.

We define what we understand by future and past pointing for geodesics first. A geodesic $\gamma$ of $(M,g)$ is future (past) pointing if 
$\dot{\gamma}$ is future (past) pointing. Note that this is well defined since $g(\dot{\gamma},\dot{\gamma})\equiv \text{const}$ and 
$g(\dot{\gamma}(t),X_{\gamma(t)})<(>)\;0$ for one $t$ if and only if $g(\dot{\gamma}(t),X_{\gamma(t)})<(>)\;0$ for all $t$. 

The following definition is taken from \cite{be}. A continuous curve $\gamma \colon I\to M$ is said to be future (past) pointing if for each $t_0\in I$ there 
exist an $\e >0$ and a convex normal neighborhood $U$ around $\gamma(t_0)$ with $\gamma|(t_0-\e,t_0+\e)\subseteq U$ such that given any 
$t_1< t_2\in (t_0-\e,t_0+\e)$ there is a future (past) pointing geodesic in $(U,g|_{U})$ connecting $\gamma (t_1)$ with $\gamma(t_2)$.

Call a curve in a spacetime causal if it is future or past pointing. The notion of causal curves (in opposition to future or past pointing) can be extended to 
general (possibly not time-oriented) Lorentzian manifolds via lifting: Let $(M,g)$ be a Lorentzian manifold and $\gamma\colon I\to M$ a continuous 
curve. We call $\gamma$ causal if the lift of $\gamma$ to a time-oriented cover $(M',g')$ of $(M,g)$ is  future or past pointing.

This definition does not depend on the chosen covering space or the chosen time orientation on the cover. Simply note that 
for any two time orientable covering spaces $(M',g')$ and $(M'',g'')$ of $(M,g)$ such that $(M'',g'')$ is a Lorentzian cover of 
$(M',g')$, any future (past) pointing curve in $(M',g')$ lifts to a future (past) pointing curve in $(M'',g'')$ and any future (past) 
pointing curve in $(M'',g'')$ projects to a future (past) pointing curve in $(M',g')$. Since the universal cover is simply connected, 
it is  time-orientable. Consequently the definition does not depend on the chosen time-orientable covering manifold. 

\begin{remark}[\cite{be}]\label{R-1}
Any future (past) pointing curve can be reparameterized to a Lipschitz continuous curve. Especially any future (past) 
pointing curve admits a monotone reparameterization w.r.t. $g_R$-arclength. This readily extends to causal curves.
\end{remark}

The following proposition is well known. We include the proof for the sake of completeness.

\begin{prop}\label{P00}
Let $\gamma\colon [a,b]\to M$ be a $g_R$-arclength parameterized curve.
Then $\gamma$ is future pointing if and only if $\dot{\gamma}(t)$ is future pointing for almost all $t\in [a,b]$.
\end{prop}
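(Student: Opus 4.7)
The plan is to treat the two directions of the equivalence separately; both are local around an arbitrary $t_0 \in [a,b]$. I fix a convex normal neighborhood $U$ of $\gamma(t_0)$ together with coordinates $(x^0,\ldots,x^n)$ on $U$ chosen so that $\partial_0$ is everywhere future pointing timelike and the hypersurfaces $\{x^0=\text{const}\}$ are spacelike; after shrinking, I may assume $\gamma$ maps a short interval about $t_0$ into $U$. I will write $C_p$ for the closed future pointing causal cone in $T_pM$, which is a closed convex cone varying continuously in $p$.

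For the forward direction, I exploit that future pointing curves admit a Lipschitz $g_R$-arclength reparameterization (Remark \ref{R-1}), so $\dot\gamma(t)$ exists for almost every $t$. At such a differentiability point $t$, the definition of future pointing supplies, for every sufficiently small $h>0$, a future pointing causal geodesic in $U$ joining $\gamma(t)$ to $\gamma(t+h)$. Its initial tangent is $v_h := \exp^{-1}_{\gamma(t)}(\gamma(t+h))$, which therefore lies in $C_{\gamma(t)}$. Because $d\exp_{\gamma(t)}|_0 = \id$, the quotient $v_h/h$ converges to $\dot\gamma(t)$ as $h\to 0^+$, and closedness of $C_{\gamma(t)}$ forces $\dot\gamma(t)\in C_{\gamma(t)}$.

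For the converse, I assume $\dot\gamma(t)\in C_{\gamma(t)}$ a.e.\ and I want to verify the convex normal neighborhood condition at $t_0$. For $s_1<s_2$ in the short interval around $t_0$, the initial tangent of the unique geodesic in $U$ joining $\gamma(s_1)$ and $\gamma(s_2)$ is $v := \exp^{-1}_{\gamma(s_1)}(\gamma(s_2))$. Applying the fundamental theorem of calculus to the Lipschitz curve $s\mapsto \exp^{-1}_{\gamma(s_1)}(\gamma(s))$ gives
\begin{equation*}
v \;=\; \int_{s_1}^{s_2} d\bigl(\exp^{-1}_{\gamma(s_1)}\bigr)_{\gamma(s)}\bigl(\dot\gamma(s)\bigr)\,ds.
\end{equation*}
If each integrand lies in $C_{\gamma(s_1)}$ for a.e.\ $s$, closedness and convexity of this cone force $v\in C_{\gamma(s_1)}$, so that the geodesic is future causal and the definition is satisfied.

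The main obstacle is establishing the containment $d(\exp^{-1}_{\gamma(s_1)})_{\gamma(s)}(C_{\gamma(s)})\subseteq C_{\gamma(s_1)}$ uniformly in $s\in[s_1,s_2]$: the differential is only approximately the identity, and the cone fields at $\gamma(s)$ and $\gamma(s_1)$ are only approximately equal, so one must match error terms. My plan is to work in normal coordinates centered at $\gamma(s_1)$: there the differential equals the identity at $s=s_1$ and deviates by $O(|s-s_1|^2)$ via the Jacobi equation, while $p\mapsto C_p$ varies continuously. Further shrinking of the interval about $t_0$, combined with compactness of the image, gives the required uniform containment. An equivalent and slightly cleaner route, which I would fall back to if the direct estimate became tedious, is to mollify $\gamma$ in coordinates to a smooth curve $\gamma_\nu$; its tangents, being weighted averages of nearby future causal tangents expressed in a common chart, lie in any prescribed open enlargement of the cone field once $\nu$ is small. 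Then $\gamma_\nu$ is classically future pointing, and one passes to the limit $\nu\to 0$ using continuity of $\exp^{-1}$ and closedness of $C_p$.
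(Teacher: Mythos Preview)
Your forward direction is correct and essentially identical to the paper's argument. The converse, however, has a real gap. The containment $d(\exp^{-1}_{\gamma(s_1)})_{\gamma(s)}(C_{\gamma(s)})\subseteq C_{\gamma(s_1)}$ is simply false in general: in normal coordinates at $\gamma(s_1)$ the metric at $\gamma(s)$ differs from $g_{\gamma(s_1)}$ by a curvature term, so the light cone at $\gamma(s)$ is \emph{close to} but not \emph{contained in} the light cone at $\gamma(s_1)$. A lightlike $\dot\gamma(s)$ lies on the boundary of $C_{\gamma(s)}$ and its image under the differential will generically be slightly spacelike for $g_{\gamma(s_1)}$; no amount of shrinking the interval produces the inclusion, because there is no margin at the boundary. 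Your mollification fallback has the same defect: averages of nearby causal vectors in a chart land only in an \emph{enlarged} cone field, so $\gamma_\nu$ is not ``classically future pointing'' for $g$ but only for some wider metric $\tilde g$. One can push this through by letting $\tilde g\to g$ together with $\nu\to 0$ and tracking convergence of $\tilde g$-geodesics, but you have not set that up, and the phrase ``continuity of $\exp^{-1}$'' suggests you were thinking of a fixed metric.

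The paper avoids this difficulty by a different mechanism: it prepends a short future timelike geodesic $\xi\colon[r,s]\to U$ ending at $\gamma(s)$ and works with $\sigma:=\exp^{-1}_{\xi(r)}\circ(\xi\ast\gamma|_{[s,t]})$. The point is that the Gau\ss\ lemma gives the \emph{exact} identity $g_{\xi(r)}(\dot\sigma(\tau),\sigma(\tau))=g_{\gamma(\tau)}(\dot\gamma(\tau),(\exp_{\xi(r)})_\ast\sigma(\tau))$, and the right-hand side is $\le 0$ because $\dot\gamma(\tau)$ is future causal and the radial field $(\exp_{\xi(r)})_\ast\sigma(\tau)$ is future timelike so long as $\sigma(\tau)$ stays in the open timelike cone of $T_{\xi(r)}M$. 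Hence $\tau\mapsto g_{\xi(r)}(\sigma(\tau),\sigma(\tau))$ is nonincreasing, the geodesic from $\xi(r)$ to $\gamma(t)$ is timelike, and letting $r\to s$ gives the claim. The Gau\ss\ lemma is precisely what converts a causal condition at $\gamma(\tau)$ into an inequality at the fixed base point without any approximation error; this is the idea your argument is missing.
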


\begin{remark}
The definition does not depend on the chosen Riemannian metric. More precisely, the parameter change between two arclength parameterizations 
relative to two Riemannian metrics is a locally bi-Lipschitz map and the chain rule applies almost everywhere. Thus the fact that $\dot{\gamma}(t)$ is 
future pointing for almost all $t$ is independent of the particular arclength parameterization.

Following proposition \ref{P00} we could have defined future pointing curves as rectifiable curves $\gamma$ such that 
$\dot{\gamma}(s)$ is future pointing for almost all $s$, where $s\mapsto \gamma(s)$ is some parameter of $\gamma$
such that $\dot{\gamma}(s)$ exists almost everywhere.

At first sight, this definition may look more restrictive than the usual definition of future pointing, but is in fact 
equivalent. By remark \ref{R-1} any future pointing curve is rectifiable and therefore admits a 
$g_R$-arclength parameterization. Any $g_R$-arclength parameterization is Lipschitz. By Rademachers theorem 
any Lipschitz curve is differentiable almost everywhere, consequently any future pointing curve $\gamma$ admits a 
monotone reparameterization such that $\dot{\gamma}$ exists almost everywhere and is future pointing by proposition
\ref{P00}.
\end{remark}

\begin{proof}[Proof of proposition \ref{P00}]
(i) Assume that $\gamma$ is future pointing. Consider $t\in [a,b]$ such that $\dot{\gamma}(t)$ exists. Denote 
$p:=\gamma(t)$. Then for $|s-t|$ sufficiently small the curve $\widetilde{\gamma}(s):=\exp^{-1}_{p}(\gamma(s))$ is 
defined. By definition the vector $\widetilde{\gamma}(s)\in TM_p$ is future pointing in $(TM_{p},g_{p})$. Identify $T(TM_{p})_{0_p}$ with 
$TM_p$ in the canonical way. Then $\dot{\gamma}(t)=\dot{\widetilde{\gamma}}(t)$ is future pointing in $(TM_{p},g_{p})$ as a 
limit of future pointing vectors. Note that it cannot be $0$ since we assumed that $\gamma$ is parameterized by 
$g_R$-arclength. Since $\dot{\gamma}(t)$ exists for almost all $t\in[a,b]$, we obtain that $\dot{\gamma}(t)$ is future 
pointing for almost all $t\in [a,b]$.

(ii) Assume that $\dot{\gamma}(t)$ is future pointing for almost all $t\in [a,b]$. Let $s<t\in [a,b]$ be given 
such that $\gamma|_{[s,t]}\subset U$ for some open convex normal neighborhood $U$. We want to show that the 
uniquely defined geodesic $\zeta\colon [0,1]\to U$ connecting $\gamma(s)$ with $\gamma(t)$ is future pointing. 

Let $r<s$. Consider a  future pointing timelike geodesic $\xi\colon [r,s]\to U$ with terminal point $\xi(s)=\gamma(s)$. Then the curve $\sigma 
:=\exp_{\xi(r)}^{-1}\circ(\xi\ast \gamma|_{[s,t]})$ is well defined. Note that $\sigma$ is smooth on $[r,s]$. For $\tau\le s$ we know that$\sigma(\tau)
=\frac{\tau-r}{s-r}\dot{\xi}(r)$ is future pointing timelike in $(TM_{\xi(r)},g_{\xi(r)})$. This yields $g_{\xi(r)}(\sigma(s),\sigma(s))<0$. Consequently there 
exists $s_0>s$ such that $g_{\xi(r)}(\sigma(\tau),\sigma(\tau))<0$ for all $s\le \tau\le s_0$. Assume that there exists $s_0<t_0\le t$ with 
$g_{\xi(r)}(\sigma(s_0),\sigma(s_0))<g_{\xi(r)}(\sigma(t_0),\sigma(t_0))$. We can assume that $g_{\xi(r)}(\sigma(\tau),\sigma(\tau))<0$ for all 
$\tau\in [s_0,t_0]$, since $\tau \mapsto g_{\xi(r)}(\sigma(\tau),\sigma(\tau))$ is a continuous function. Then we know that $(\exp)_{\ast\;\sigma(\tau)}
(\sigma(\tau))$ is future pointing in $(TM_{\exp(\sigma(\tau))},g_{\exp(\sigma(\tau))})$ for all $\tau \in [s_0,t_0]$. For almost all $\tau\in [s_0,t_0]$ 
we have
\begin{align*}
\frac{d}{d\tau} g_{\xi(r)}(\sigma,\sigma)(\tau)&=2g_{\xi(r)}(\dot{\sigma}(\tau),\sigma(\tau))\\
&=2g_{\exp_{\xi(r)}(\sigma(\tau))}\left(\dot{\gamma}(\tau),\left(\exp_{\xi(r)}\right)_\ast(\sigma(\tau))\right)\le 0,
\end{align*}
using the Gau\ss\; lemma and the assumption that $\dot{\gamma}(\tau)$ is future pointing for almost all $\tau$. 
Then we get 
\begin{align*}
0<g_{\xi(r)}(\sigma(t_0),\sigma(t_0))-g_{\xi(r)}(\sigma(s_0),\sigma(s_0))
=\int_{s_0}^{t_0} \frac{d}{d\tau} g_{\xi(r)}(\sigma,\sigma)(\tau)d\tau \le 0.
\end{align*}
Therefore $\tau\mapsto g_{\xi(r)}(\sigma(\tau),\sigma(\tau))$ has to be monotone decreasing. This yields 
$$g_{\xi(r)}(\exp_{\xi(r)}^{-1}(\gamma(t)),\exp_{\xi(r)}^{-1}(\gamma(t)))<0$$ 
and the geodesic $\zeta_{r}\colon [0,1]\to M$, $\lambda\mapsto \exp_{\xi(r)}(\lambda\exp_{\xi(r)}^{-1}(\gamma(t)))$ 
is future pointing timelike. 

Now choose a sequence $\{r_n\}\subset [r,s]$ with $\lim r_n=s$ and 
geodesic $\zeta_{r_n}$ as above. The sequence $\zeta_{r_n}$ converges to the geodesic $\zeta$. Recall that the convergence
of $\zeta_{r_n}$ to $\zeta$ is equivalent to the convergence of $\dot{\zeta}_{r_n}(0)$ to $\dot{\zeta}(0)$. Since 
the set of future pointing vectors in $TM$ is closed and $\zeta$ is nonconstant, we see that $\zeta$ is future 
pointing. This construction is valid for any pair of parameters $s<t$ such that 
$\gamma(s)$ and $\gamma(t)$ are sufficiently close. Therefore we obtain that $\gamma$ is future pointing.
\end{proof}

At this point it is easy to see that for any causal curve $\gamma\colon [a,b]\to M$ in a Lorentzian manifold $(M,g)$ there exists 
a piecewise smooth causal curve $\gamma_p\colon [a,b]\to M$ such that $\gamma$ and $\gamma_p$ are homotopic 
with fixed endpoints via causal curves. It suffices to consider the case that $(M,g)$ is a spacetime, i.e. $\gamma$ is 
(w.l.o.g.) future pointing. Consider $s<t\in [a,b]$ such that $\gamma|_{[s,t]}$ is contained in a convex normal neighborhood
$U$ and let $\tau\in [s,t]$. By definition the unique geodesic $\zeta_\tau\colon [s,\tau]\to U$ connecting $\gamma(s)$ with 
$\gamma(\tau)$ is future pointing. Define the future pointing curve $\gamma_\tau:=\zeta_\tau\ast \gamma|_{[\tau,t]}$. Then
$\tau\mapsto\gamma_\tau$ defines a continuous deformation of $\gamma|_{[s,t]}$ into $\zeta_t$ via future pointing curves. 
Using a simple compactness argument we see that $\gamma$ is homotopic with fixed endpoints to a piecewise 
geodesic future pointing curve via future pointing curves.

The following proposition shows that actually more is true (We call a smooth curve timelike if it is causal and all tangents are 
timelike vectors).
\begin{prop}[\cite{pen1}]\label{P01}
Let $(M,g)$ be a Lorentzian manifold. For every causal curve $\gamma\colon [a,b]\to M$ there either exists a smooth 
timelike curve $\gamma_t\colon [a,b]\to M$ or a lightlike geodesic $\gamma_l\colon [a,b]\to M$ (i.e. $\dot{\gamma}_l$ 
lightlike) such that $\gamma_t$ and $\gamma_l$ are homotopic with fixed endpoints to $\gamma$ via causal curves.
\end{prop}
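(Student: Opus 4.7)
The plan is to reduce first to the spacetime case: the twofold time-orientation cover $(M',g')\to (M,g)$ is Lorentzian, any causal $\gamma$ in $M$ lifts to a future pointing curve in $M'$ (after choosing an orientation of the lift), and a smooth timelike curve or a lightlike geodesic downstairs is obtained by projecting such an object upstairs, together with its causal homotopy. Assume therefore $(M,g)$ is a spacetime and $\gamma\colon [a,b]\to M$ is future pointing. By the deformation argument immediately preceding the proposition, I may replace $\gamma$ by a future pointing piecewise geodesic $\tilde\gamma\colon [a,b]\to M$ with finitely many break points, homotopic to $\gamma$ through future pointing curves. Each smooth piece of $\tilde\gamma$ is then either strictly timelike or lightlike.

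The dichotomy now becomes: either (I) every smooth piece of $\tilde\gamma$ is lightlike \emph{and} the one-sided tangents at every break are positively proportional, in which case $\tilde\gamma$ is (after reparametrization) a single lightlike geodesic and we set $\gamma_l:=\tilde\gamma$; or (II) $\tilde\gamma$ contains either a timelike piece, or a ``corner'' at some $\tau$ where the left/right tangents $v^-,v^+\in \Light(M,[g])_{\tilde\gamma(\tau)}$ are future pointing but not positively proportional. In case (II) the goal is to deform $\tilde\gamma$ locally to a strictly timelike future pointing curve and then to smooth.

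The local input I would use is the elementary fact that for two future lightlike vectors $v^\pm$ at a point of a spacetime with $v^+\notin \R_{>0}v^-$ one has $g(v^-,v^+)<0$. Consequently, in a convex normal neighborhood $U$ of a corner $\tilde\gamma(\tau)$, the radial vectors $\exp^{-1}_{\tilde\gamma(\tau-\eta)}(\tilde\gamma(\tau+\eta))$ are future pointing timelike for all small $\eta>0$, and the unique $U$-geodesic joining $\tilde\gamma(\tau-\eta)$ with $\tilde\gamma(\tau+\eta)$ is therefore timelike. In case of a strictly timelike piece the same conclusion is immediate. I would then replace each such ``bad'' sub-arc of $\tilde\gamma$ by this timelike chord and argue, via the straight-line interpolation in normal coordinates, that the replacement is homotopic to the original sub-arc through future pointing curves: at every interpolation parameter, the tangent vectors lie in the convex open timelike cone around the chord direction (using openness and convexity of the chronological cone at each base point), hence remain future pointing. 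Doing this at each timelike piece and each corner yields after finitely many steps a future pointing, strictly timelike, piecewise geodesic curve $\tilde\gamma_1$ homotopic to $\tilde\gamma$ through future pointing causal curves.

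Finally, since $\dot{\tilde\gamma}_1$ lies pointwise in the open set $\Time(M,[g])$, a standard convolution smoothing in normal-coordinate charts (as used in lemma \ref{L42} and the proof of proposition \ref{P5a}, applied here with fixed endpoints) produces a smooth future pointing timelike curve $\gamma_t\colon [a,b]\to M$ homotopic to $\tilde\gamma_1$ through future pointing curves; concatenating the three homotopies $\gamma\rightsquigarrow\tilde\gamma\rightsquigarrow\tilde\gamma_1\rightsquigarrow\gamma_t$ gives the claim. The main technical obstacle is the corner step in (II): one has to check both that the straightened segment is strictly timelike (which rests on $g(v^-,v^+)<0$) and that the intermediate curves in the normal-coordinate interpolation stay causal. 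The latter is not automatic, but follows by a direct computation in normal coordinates exploiting the convexity of the causal cone and the fact that the endpoints of the chord are chronologically related, so that the whole interpolation is trapped strictly inside the causal cone away from its boundary.
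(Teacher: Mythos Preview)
The paper does not actually prove this proposition: it cites Penrose \cite{pen1} and adds only the one-line remark that ``the proof relies essentially on the convexity of the set of future pointing vectors (in a time-orientable cover).'' Your sketch is precisely the standard Penrose corner-cutting argument, and the convexity of the future cone is exactly what you invoke (for $g(v^-,v^+)<0$ at a genuine corner, and for the causality of convex combinations of future-pointing tangents). So your approach matches the cited source and the paper's hint.

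Two small points worth tightening. First, the step ``doing this at each timelike piece and each corner yields after finitely many steps a strictly timelike piecewise geodesic'' must be read iteratively, not simultaneously: after you replace a neighborhood of one corner by a timelike chord, the new junctions between this chord and the adjacent (possibly still lightlike) segments are of timelike--lightlike type and must themselves be straightened in a subsequent step; the process terminates because the number of segments is finite. Second, for the causal homotopy at each corner you do not need the linear interpolation in normal coordinates and the accompanying computation: the very deformation $\tau\mapsto \zeta_\tau\ast\gamma|_{[\tau,t]}$ described in the paragraph immediately preceding the proposition already provides a homotopy through future-pointing curves from any causal arc in a convex normal neighborhood to its geodesic chord, and applies verbatim here.
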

The proof relies essentially on the convexity of the set of future pointing vectors (in a time-orientable cover).


\begin{thebibliography}{}

\bibitem{be}J.K. Beem and P.E. Ehrlich and K.L. Easley. Global {L}orentzian {G}eometry. Second edition. Marcel Dekker Inc., New York. (1996).

\bibitem{es}J.H. Eschenburg. The {S}plitting {T}heorem for {S}pace-{T}imes with {S}trong {E}nergy {C}ondition. J. Differential Geom. 27 (1988), 477--491.

\bibitem{ger0}R.P. Geroch. Topology in {G}eneral {R}elativity. J. Mathematical Phys. 8 (1967), 782--786.

\bibitem{gaho}G. J. Galloway and A. Horta. Regularity of {L}orentzian {B}usemann {F}unctions. Trans. Amer. Math. Soc. 348 (1996), 2063--2084.

\bibitem{hirsch}M.W. Hirsch. Differential {T}opology. Graduate Texts in Mathematics, No. 33. Springer, New York. (1976).

\bibitem{ms1}E. Minguzzi and M. S{\'a}nchez. The {C}ausal {H}ierarchy of {S}pacetimes. In Recent {D}evelopments in pseudo-{R}iemannian {G}eometry,
Eur. Math. Soc., Z\"urich (2008), 299--358.

\bibitem{pen1} R. Penrose. Techniques of {D}ifferential {T}opology in {R}elativity. In Conference Board of the Mathematical Sciences Regional Conference Series 
in Applied Mathematics, No. 7. SIAM, Philadelphia. (1972).

\bibitem{su}S. Suhr. Homologically {M}aximizing {G}eodesics in {C}onformally {F}lat {T}ori. ar{X}iv:/1003.2322v1.

\bibitem{suh102}S. Suhr. Closed {G}eodesics in {L}orentzian {S}urfaces. ar{X}iv:/1011.4878v1.

\bibitem{suh103}S. Suhr. Class {A} {S}pacetimes. ar{X}iv:/1012.4200v1.

\bibitem{suh110}S. Suhr. Length {M}aximizing {I}nvariant {M}easures in {L}orentzian {G}eometry. ar{X}iv:1102.1386v1.

\end{thebibliography}
\end{document}